\theoremstyle{plain}
\newtheorem{Th}{Theorem}[section]
\newtheorem{Lemma}[Th]{Lemma}
\newtheorem{Cor}[Th]{Corollary}
\newtheorem{Prop}[Th]{Proposition}
\theoremstyle{definition}
\newtheorem{Rem}[Th]{Remark}
\newtheorem{?}[Th]{Problem}
\newtheorem{Ex}[Th]{Example}
\newcommand{\Z}{\mathbb{Z}}
\DeclareMathOperator{\id}{id}
\DeclareSymbolFont{rsfs}{U}{rsfs}{m}{n}
\DeclareSymbolFontAlphabet{\mathscrsfs}{rsfs}
\newcommand{\bw}{{\textstyle\bigwedge}}
\newcommand{\ad}{\mathrm{ad}}
\newcommand{\uf}{U\mathfrak{f}_3}
\newcommand{\lamlam}{\lambda/[\lambda,\lambda]}
\begin{document}
\title{The kernel of formal polylogarithms}

\author{Anton Alekseev\thanks{Section of Mathematics, University of Geneva, Rue du Conseil-Général 7-9, 1205 Geneva, Switzerland; \href{mailto:Anton.Alekseev@unige.ch}{anton.alekseev@unige.ch}, \href{mailto:megan.howarth@unige.ch}{megan.howarth@unige.ch}, \href{mailto:Pavol.Severa@unige.ch}{pavol.severa@unige.ch}.}, \hskip 0.3 cm 
Megan Howarth\footnotemark[1], \hskip 0.3 cm 
%\thanks{Section of Mathematics, University of Geneva, Rue du Conseil-Général 7-9, 1205 Geneva, Switzerland; \href{mailto:megan.howarth@unige.ch}{megan.howarth@unige.ch}.}, \hskip 0.3 cm 
Florian Naef\thanks{School of Mathematics, Trinity College, Dublin 2, Ireland; \href{mailto:naeff@tcd.ie}{naeff@tcd.ie}.}, \\ 
Muze Ren\thanks{Institute of Mathematics, University of Zurich, Winterthurerstrasse 190, 8057 Zürich, Switzerland; \href{mailto:muze.ren@unige.ch}{muze.ren@unige.ch}.}, \hskip 0.3 cm 
Pavol \v{S}evera\footnotemark[1]
%\thanks{Section of Mathematics, University of Geneva, Rue du Conseil-Général 7-9, 1205 Geneva, Switzerland; \href{mailto:Pavol.Severa@unige.ch}{pavol.severa@unige.ch}.}
}

\maketitle

\begin{abstract}
    Polylogarithmic functions (polylogs) in $n$ variables can be viewed as elements of $(U\mathfrak{p}_{m})^*$, the dual of the universal enveloping algebra of the Lie algebra $\mathfrak{p}_{m}$ of infinitesimal spherical pure braids with $m=n+3$ strands. Polylogs with %$n=1, 2$
    $m=4,5$ are used in the theory relating double shuffle relations and Drinfeld associators \cite{furusho_double_2011}.
We give explicit formulas for elements of $(U\mathfrak{p}_{m})^*$ representing polylogs, and compute the left ideal $J_{m} \subset U\mathfrak{p}_{m}$ given by their joint kernel. We introduce Lie subalgebras $\mathfrak{k}_{m}=\mathfrak{p}_{m} \cap J_{m}$, and we compute them for $m=4, 5$.
    
    %The joint kernel of polylogs is given by a certain left ideal $J_n \subset U\mathfrak{p}_{n+3}$.
    %We show that $J_n$ can be represented as a sum of the left ideal $U\mathfrak{p}_{n+3} \mathfrak{h}_n$ spanned by the Lie subalgebra $\mathfrak{h}_n \cong \mathfrak{p}_{n+2} \oplus \mathbb{C}$ (here $\mathbb{C}$ stands for the one dimensional Lie algebra over $\mathbb{C}$), and of an interesting two sided ideal $I_n \subset U\mathfrak{p}_{n+3}$. We define
    %the space $\mathfrak{pl}_{n+3} = \mathfrak{p}_{n+3}/(\mathfrak{p}_{n+3} \cap J_n)$, and we compute $\mathfrak{pl}_{4}$ and $\mathfrak{pl}_{5}$. The result for $\mathfrak{pl}_{5}$ finds applications in the description of the reduced coaction Lie algebra $\mathfrak{rc}_0$ and its relations to the double shuffle Lie algebra $\mathfrak{dmr}_0$.
    
\end{abstract}

\section{Introduction}

One variable polylogarithmic functions (polylogs) were considered by Bernoulli and Leibniz at the end of the 17th century, and then by Spence in the 19th century. Polylogarithms in $n$ variables were defined by Goncharov \cite{chatterji_polylogarithms_1995}. They are given by the power series
\begin{equation}       \label{eq:intro L}
L_a(x_1,\dots,x_n) = \sum_{0< k_1 < \dots <k_n} \, \frac{{x_1}^{k_1}\dots {x_n}^{k_n}}{k_1^{a_1} \dots k_n^{a_n}},
\end{equation}
where $a=(a_1, \dots, a_n)$ with $a_i \in \mathbb{Z}_{\geq 1}$.
Polylogarithmic functions satisfy certain differential equations intimately related to the Knizhnik-Zamolodchikov equations.

An algebraic viewpoint on polylogarithmic functions is given by Brown's version of the Chen iterated integral construction \cite{brown_multiple_2009}. In more detail, they are images of formal polylogs 
$$
l_{a} \in \mathcal{V}(\mathcal{M}_{0,m}),
$$
where $\mathcal{V}(\mathcal{M}_{0,m})$ is the reduced bar construction of the moduli space of rational curves with $m=n+3$ marked points. The simplest Chen's integrals are of the form
$$
l_\emptyset \mapsto 1 = L_\emptyset(x), \hskip 0.3cm
l_{(1)} \mapsto \int_0^x \frac{ds}{1-s} = -\log(1-x) =
\sum_{k=1}^\infty \frac{x^k}{k} = L_{(1)}(x).
$$
The bar construction $\mathcal{V}(\mathcal{M}_{0,m})$ can be identified with the dual of the universal enveloping algebra of the Lie algebra of infinitesimal spherical pure braids with $m$ strands,
$$
\mathcal{V}(\mathcal{M}_{0,m}) \cong (U\mathfrak{p}_m)^*.
$$

In this note, we compute the joint kernel of formal polylogs
$$
J_m:={\textstyle \bigcap_a} {\rm ker}(l_a) \subset U\mathfrak{p}_m,
$$
and show that it is a left ideal in $U\mathfrak{p}_m$. We give a description of this ideal, and an explicit formula for formal polylogs $l_a$ on a complement $C_m$ of 
$J_m$. Furthermore, we define Lie subalgebras 
$$
\mathfrak{k}_m = J_m \cap \mathfrak{p}_m,
$$
and compute them for $m=4, 5$.

The methods used in the proofs include quadratic duality between Lie algebras and exterior algebras, differential equations for polylogarithmic functions, combinatorics of monomial bases, and some basic homological algebra. This work is motivated by the use of formal polylogs in the theory of double shuffle Lie algebras \cite{furusho_double_2011}, and by their relations to the Kashiwara-Vergne theory (see \cite{schneps_double_2012, schneps_double_2025, enriquez_double_2025}). 

The structure of the paper is as follows. In Section \ref{sec:results}, we collect some background material and state the main results.
In Section \ref{sec: quadratic duality}, we recall the notion of quadratic duality for Lie algebras and some properties of modules with cyclic vector over such Lie algebras. In Section \ref{sec: sperical braids}, we construct the left ideal $J_{m} \subset U\mathfrak{p}_{m}$. In Section \ref{sec: polylogs}, we define the space of formal polylogs and show that their joint kernel is equal to
$J_{m}$. In Section \ref{sec: polylogs and free},
we define Lie algebras $\mathfrak{k}_{m}$, and compute them for $m=4, 5$.

\vskip 0.2cm

{\bf Acknowledgments.} This paper is part of the proceedings of the working group 
held by F.~Brown and the authors at the University of Geneva from November 2023 to June 2024. We are grateful to B. Enriquez, H. Furusho, and N. Markarian for very interesting discussions and for comments on the draft of this paper.

Research of A.A. was supported in part by the grants 208235, 220040, and 236683, and by the National Centre for Competence in Research (NCCR) SwissMAP of the Swiss National Science Foundation (SNSF), and by the award of the Simons Foundation to the Hamilton Mathematics Institute of the Trinity College Dublin under the program ``Targeted Grants to Institutes''. M.H. acknowledges the support of the SNSF grant $200020-200400$ and of the NCCR SwissMAP. Research of M.R. is supported by the SNSF postdoc mobility grant  $P500PT\_230340$, and he thanks Prof.~Anna Beliakova for providing the office where part of this work was carried out.

\section{Background and main results}      \label{sec:results}
Let $\mathcal{M}_{0, m}$ be the moduli space of rational curves with $m=n+3$ marked points
\begin{equation}
\label{eq:configurations-labels}
z_\alpha=0, z_1, \dots, z_n, z_\beta=1, z_\omega=\infty,
\end{equation}
indexed by the ordered set 
\begin{equation*}
\label{eq:S}
    \mathcal{S} :=\{\alpha,1,\ldots, n,\beta,\omega\}, \quad \alpha < 1 < \ldots < n < \beta <\omega.
\end{equation*}
The three extra points $z_\alpha= 0, z_\beta=1, z_\omega=\infty$ fix the symmetry under Möbius transformations. The cohomology $H^\bullet(\mathcal{M}_{0, m})$ is generated by the 1-forms
$$
\omega_{ij}= d \, \log(z_i-z_j),\hskip 0.3cm i\ne j\in \mathcal{S}\setminus\{\omega\}\quad (\omega_{\alpha\beta} = 0)
%\{1,2,\ldots,n,\alpha,\beta,\omega\}
$$
modulo quadratic Arnold relations \cite{arnold_cohomology_1969},
\begin{equation}
\label{eq:Arnold}
\omega_{ij}\wedge\omega_{ik}+\omega_{ik}\wedge \omega_{jk}+\omega_{jk}\wedge \omega_{ij}=0.
\end{equation}

Brown's version of the Chen iterated integral construction \cite{brown_multiple_2009} associates a holomorphic function
on the universal cover of $\mathcal{M}_{0,m}$ to any element 
$$
[\omega_1| \dots |\omega_r], \hskip 0.3cm \omega_i \in \{ \omega_{ij}\}
$$
of the reduced bar construction $\mathcal{V}(\mathcal{M}_{0,m})$. In particular, for any
$$a = (a_1, \dots, a_n) \in \Z_{\geq1}^n $$
there is a ``formal polylog'' $l_{a} \in \mathcal{V}(\mathcal{M}_{0,m})$ which corresponds to the polylogarithmic function 
$L_a(x_1,\dots,x_n)$ given by the power series \eqref{eq:intro L},
where $z_i = x_i \dots x_n$ ($1\leq i\leq n$), i.e.\ $x_i = z_i/z_{i+1}$, and where we set $z_{n+1}=z_\beta=1$.

More generally, if $\kappa\colon\{1,\dots, N\} \to \{1,\dots, n\}$ is a non-decreasing function and if $a\in\Z_{\geq1}^N$, we can consider the polylogarithmic function
$$L_{a,\kappa}(x_1,\dots,x_n) = L_a(y_1,\dots, y_N)$$
where
$$y_i = z_{\kappa(i)} / z_{\kappa(i+1)} = \prod_{\kappa(i)\leq j <\kappa(i+1)} x_j \qquad(\text{with }\kappa(N+1) := \beta).$$
Again, 
there is a unique element $l_{a,\kappa} \in \mathcal{V}(\mathcal{M}_{0,m})$ corresponding to this function.

The bar construction $\mathcal{V}(\mathcal{M}_{0,m}) \cong (U\mathfrak{p}_{m})^*$ is the  (graded) dual of the universal enveloping algebra of the Lie algebra of infinitesimal spherical pure braids $\mathfrak{p}_{m}$, defined by generators $X_{ij}=X_{ji}, i \neq j$, and relations
$$
\sum_{j,\, j\neq i} X_{ij}=0, \hskip 0.3cm [X_{ij}, X_{kl}]=0,
$$
where $i,j,k,l \in \mathcal{S}$ are all distinct. Formal polylogs $l_{a,\kappa}$ span an interesting subspace
$$
\mathcal{L}_{m} \subset (U\mathfrak{p}_{m})^*.
$$

In this paper we give an explicit description of formal polylogs $l_{a,\kappa}$ as elements of $(U\mathfrak{p}_{m})^*$ and of their joint kernel $(\mathcal{L}_{m})^\perp$, which is a left ideal in $U\mathfrak{p}_{m}$. 
%We label generators of $\mathfrak{p}_{n+3}$ with indices $\alpha, 1, \dots, n, \beta, \omega$. 
To do it, we need the short exact sequence of Lie algebras
$$
0 \to \mathfrak{f}_{n+1} \to \mathfrak{p}_{m} \overset{\pi_{\beta}}{\to} \mathfrak{p}_{m-1} \to 0,
$$
where the map $\pi_\beta\colon \mathfrak{p}_{m} \to \mathfrak{p}_{m-1}$ corresponds to forgetting the label $\beta$, and the free Lie algebra $\mathfrak{f}_{m-2}$ has generators
$X_{i \beta}, i=1, \dots, n$, and $X_{\beta \omega}$. We will also need a splitting $s\colon \mathfrak{p}_{m-1} \to \mathfrak{p}_{m}$ of this exact sequence (for more details, see Section \ref{subsection:section-map}).

The main results of this note are as follows:

\begin{Th} \label{th: intro}
    The joint kernel of formal polylogs $l_{a,\kappa} \in \mathcal{L}_{m}$ is the left ideal
    \begin{equation*}
        \mathcal{L}_{m}^\perp =
        U\mathfrak{p}_{m}\, s(\mathfrak{p}_{m-1}) +  U\mathfrak{p}_{m}\,X_{\beta \omega} 
        + \sum_{1\leq i<j \leq n} (X_{i\beta})(X_{j\beta}),
    \end{equation*}
    where $(X_{i\beta})\subset U\mathfrak{p}_{m}$ is the two sided ideal generated by $X_{i\beta}$. 
\end{Th}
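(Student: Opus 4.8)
The plan is to identify the annihilator $\mathcal{L}_m^\perp$ by understanding precisely which linear functionals on $U\mathfrak{p}_m$ arise as formal polylogs $l_{a,\kappa}$, and then to show the stated left ideal is exactly the set of elements killed by all of them. First I would unwind the description of $l_{a,\kappa}$ as an element of $\mathcal{V}(\mathcal{M}_{0,m}) \cong (U\mathfrak{p}_m)^*$. Using the short exact sequence $0 \to \mathfrak{f}_{n+1} \to \mathfrak{p}_m \xrightarrow{\pi_\beta} \mathfrak{p}_{m-1} \to 0$ and the splitting $s$, one gets a Poincaré–Birkhoff–Witt-type factorization $U\mathfrak{p}_m \cong U\mathfrak{f}_{n+1} \otimes U\mathfrak{p}_{m-1}$ (as coalgebras/filtered vector spaces), so a functional on $U\mathfrak{p}_m$ is determined by its restriction to the two tensor factors together with its behavior on products. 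The polylog $l_{a,\kappa}$ depends only on the variables $x_1,\dots,x_n$, which corresponds to the fact that it factors through the quotient that forgets everything not detected by the iterated integrals in the forms $\omega_{i\beta}$; this is what forces the $U\mathfrak{p}_m\, s(\mathfrak{p}_{m-1})$ summand and the $U\mathfrak{p}_m\, X_{\beta\omega}$ summand into the kernel. Concretely, $s(\mathfrak{p}_{m-1})$ and $X_{\beta\omega}$ together cut $U\mathfrak{p}_m$ down to (a completion of) the tensor algebra on the remaining free generators $X_{1\beta},\dots,X_{n\beta}$, and modulo these the pairing with $l_{a,\kappa}$ becomes the classical pairing of a word in letters $X_{i\beta}$ against the iterated-integral word $[\omega_{j_1\beta}|\cdots|\omega_{j_r\beta}]$ (shuffled with appropriate $ds/s$ pieces coming from the exponents $a_i$).

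The heart of the argument is then the third summand $\sum_{i<j}(X_{i\beta})(X_{j\beta})$. After the first reduction, I would show that the span of the $l_{a,\kappa}$ inside the dual of the free tensor algebra on $X_{1\beta},\dots,X_{n\beta}$ is exactly the span of functionals dual to the monomials $X_{i_1\beta}X_{i_2\beta}\cdots X_{i_r\beta}$ with a weakly decreasing index sequence $i_1 \geq i_2 \geq \cdots \geq i_r$ — equivalently, the "staircase" words. This matches the combinatorics of $L_{a,\kappa}$: the nested summation $0 < k_1 < \cdots < k_N$ together with $y_i = \prod_{\kappa(i)\le j<\kappa(i+1)} x_j$ translates, under the iterated-integral-to-word dictionary, into exactly the monomials whose letters are non-increasing in the $\beta$-index, with $\kappa$ recording the multiplicities and $a$ recording the number of $ds/s$ insertions between consecutive jumps. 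Having established that $\mathcal{L}_m$ pairs non-degenerately with the span of decreasing monomials and annihilates their complement, the complement is spanned by all monomials containing a factor $X_{i\beta}\cdots X_{j\beta}$ with $i<j$ somewhere, i.e. by the two-sided-ideal product $(X_{i\beta})(X_{j\beta})$ — this identifies the third summand and simultaneously produces the promised complement $C_m$ as the span of staircase monomials.

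The main obstacle I anticipate is the bookkeeping in this combinatorial identification: one must check carefully that (i) the iterated integrals $l_{a,\kappa}$, re-expressed in the $X_{i\beta}$ generators after killing $s(\mathfrak{p}_{m-1})$ and $X_{\beta\omega}$, really do span precisely the decreasing-monomial functionals and not something smaller, and (ii) the interaction between the two-sided ideals $(X_{i\beta})$ and the left-ideal summands is clean — i.e. that $U\mathfrak{p}_m\,s(\mathfrak{p}_{m-1}) + U\mathfrak{p}_m\,X_{\beta\omega} + \sum_{i<j}(X_{i\beta})(X_{j\beta})$ is genuinely a left ideal (the two-sided pieces cause no trouble, but one should verify left-multiplication by generators $X_{kl}$ with $k,l \notin\{\beta,\omega\}$ preserves the sum, using the $\mathfrak{p}_m$-relations $\sum_j X_{ij}=0$ to rewrite such generators modulo $s(\mathfrak{p}_{m-1})$). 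For the non-degeneracy in (i) I would use the explicit differential equations satisfied by polylogs (the KZ-type equations mentioned in the introduction) to set up an induction on weight $\sum a_i$, reducing a functional against a staircase monomial of weight $w$ to lower-weight data via $\partial_{x_i}$, which shifts $a_i \mapsto a_i - 1$ or splits a block of $\kappa$; the base case is the single explicit integral $l_{(1)} \mapsto -\log(1-x)$. Once these two points are nailed down, equality of the two left ideals follows by a dimension/graded-piece count in each weight, since both sides are now described as the annihilator of the same explicitly-spanned subspace $C_m$.
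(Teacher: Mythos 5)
Your overall strategy is the same as the paper's: split $U\mathfrak{p}_m = U\mathfrak{f}_{m-2}\otimes Us(\mathfrak{p}_{m-1})$ by PBW, show the polylogs are (up to sign) the dual basis of the staircase monomials $w_{a,\kappa}$ by induction on the weight $\sum a_i$ using the differential equations, identify the annihilated complement with $I_m$, and check the left-ideal property via the tangential action of $s(\mathfrak{p}_{m-1})$ on $\mathfrak{f}_{m-2}$. However, there are two concrete soft spots. First, your claim that killing $U\mathfrak{p}_m\,s(\mathfrak{p}_{m-1})$ and $U\mathfrak{p}_m\,X_{\beta\omega}$ ``cuts $U\mathfrak{p}_m$ down to the tensor algebra on $X_{1\beta},\dots,X_{n\beta}$'' is false as stated: $U\mathfrak{p}_m\,X_{\beta\omega}$ is only a \emph{left} ideal, so it removes words ending in $X_{\beta\omega}$ but leaves all internal occurrences of $X_{\beta\omega}$, and these are essential --- they are exactly what encodes the exponents $a_i>1$ via $w_{a,\kappa}=\prod X_{\beta\omega}^{a_i-1}X_{\kappa(i)\beta}$. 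If you literally worked in the tensor algebra on the $X_{i\beta}$ alone you would only see the polylogs with all $a_i=1$, and the dimension count at the end would fail. You do seem aware of this later (``$a$ recording the number of $ds/s$ insertions''), but the combinatorial identification must be carried out in the full monomial basis of $U\mathfrak{f}_{m-2}$, where the complement of the staircase words consists of words ending in $X_{\beta\omega}$ together with words having some $X_{i\beta}$ to the left of some $X_{j\beta}$ with $i<j$.

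Second, your justification that $U\mathfrak{p}_m\,s(\mathfrak{p}_{m-1})$ lies in the joint kernel (``$l_{a,\kappa}$ depends only on $x_1,\dots,x_n$'') does not work: every function on $\mathcal{M}_{0,m}$ depends only on those variables, and the differential $dL_{a,\kappa}$ genuinely contains the forms $dx_j/x_j$ ($j<n$) dual to the generators $X_{\alpha i}$ of $s(\mathfrak{p}_{m-1})$. The actual mechanism is in two steps: (i) one checks on \emph{generators} that $l_{a,\kappa}(s(Y))=\langle\delta l_{a,\kappa},1\otimes s(Y)\rangle=0$ because the polylog coefficients of the relevant forms in $\delta l_{a,\kappa}$ all have nonempty index $a'$ and hence vanish on $1$; and (ii) one promotes this to the whole left ideal $U\mathfrak{p}_m\,s(\mathfrak{p}_{m-1})$ by first proving that the joint kernel \emph{is} a left ideal, which in turn requires showing that $\mathcal{L}_m$ is a $U\mathfrak{p}_m$-submodule of $(U\mathfrak{p}_m)^*$, i.e.\ that the de Rham differential of a polylog is again a combination of polylogs tensored with Arnold $1$-forms. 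Your proposal proves that the \emph{right-hand side} is a left ideal, but that alone does not let you deduce that the polylogs vanish on all of $U\mathfrak{p}_m\,s(\mathfrak{p}_{m-1})$ from vanishing on $s(\mathfrak{p}_{m-1})$; the submodule property of $\mathcal{L}_m$ (or an explicit computation of $l_{a,\kappa}(u\,s(Y))$ for arbitrary $u$) is the missing ingredient.
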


\begin{Th} \label{th: intro2}
    Under the direct sum decomposition 
    $$U\mathfrak{p}_{m} = U\mathfrak{f}_{m-2} \oplus U\mathfrak{p}_{m}\, s(\mathfrak{p}_{m-1})$$
     the formal polylogs $l_{a,\kappa}\in (U\mathfrak{p}_{m})^*$ vanish on $U\mathfrak{p}_{m}\, s(\mathfrak{p}_{m-1})$ and for any
     $$\alpha \in U\mathfrak{f}_{m-2},$$ $l_{a,\kappa}(\alpha)$ is $(-1)^N$ times the coefficient of the monomial 
     $$w_{a,\kappa}: =\prod_{i=N}^1 X_{\beta\omega}^{a_i - 1} X_{\beta\kappa(i)} \in U\mathfrak{f}_{m-2}$$
      in $\alpha$. 
\end{Th}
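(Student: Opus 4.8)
The plan is to reduce the statement to an explicit computation with Chen iterated integrals, using the direct sum decomposition $U\mathfrak{p}_m = U\mathfrak{f}_{m-2} \oplus U\mathfrak{p}_m\, s(\mathfrak{p}_{m-1})$ as the organizing principle. First I would recall that $l_{a,\kappa}\in(U\mathfrak{p}_m)^*$ is, under the identification $(U\mathfrak{p}_m)^*\cong\mathcal{V}(\mathcal{M}_{0,m})$, the element of the reduced bar construction whose Chen integral along a tangential path near the base point gives $L_{a,\kappa}$. The vanishing of $l_{a,\kappa}$ on $U\mathfrak{p}_m\, s(\mathfrak{p}_{m-1})$ is exactly the statement that $\mathcal{L}_m^\perp \supseteq U\mathfrak{p}_m\, s(\mathfrak{p}_{m-1})$, which is already part of Theorem \ref{th: intro}; so the content that is genuinely new here is the explicit formula for $l_{a,\kappa}|_{U\mathfrak{f}_{m-2}}$. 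The strategy for that is to observe that the restriction of a formal polylog to the subalgebra $U\mathfrak{f}_{m-2}\subset U\mathfrak{p}_m$ only sees the iterated integrals built from the forms $\omega_{i\beta}$ ($i=1,\dots,n$) and $\omega_{\beta\omega}$, i.e.\ the one-forms ``attached to the strand $\beta$'', because pairing a bar element $[\omega_{i_1}|\cdots|\omega_{i_r}]$ with a monomial $X_{j_1 t_1}\cdots X_{j_r t_r}$ in $U\mathfrak{f}_{m-2}$ is nonzero only when each $\omega_{i_k}$ is the dual form $\omega_{\beta j_k}$ (up to the combinatorial sign coming from the PBW-type pairing). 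Concretely, I would dualize the statement to: the component of $l_{a,\kappa}$ in $(U\mathfrak{f}_{m-2})^*$ (the first summand's dual), expressed in the basis dual to the PBW-monomials, is $(-1)^N$ times the indicator of the single monomial $w_{a,\kappa}$.

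The key computation is then the classical fact that the one-variable depth-one polylog $\operatorname{Li}_{a}(y) = \sum_{k\geq1} y^k/k^a$ is the iterated integral $\int \frac{dt}{t}\cdots\frac{dt}{t}\frac{dt}{1-t}$ with $a-1$ copies of $dt/t$ and one copy of $dt/(1-t)$, and its multivariable generalization: $L_{a,\kappa}(x)$, written in the coordinates $y_i=\prod_{\kappa(i)\le j<\kappa(i+1)}x_j$, is the iterated integral along a path from $0$ whose bar-form expansion, after pulling back along the inclusion dual to $\mathfrak{f}_{m-2}\hookrightarrow\mathfrak{p}_m$ and using $\omega_{\beta\omega}=d\log(z_\beta-z_\omega)$ playing the role of ``$dz/z$'' and $\omega_{\beta j}=d\log(z_\beta-z_j)$ playing the role of the ``$dz/(1-z)$''-type forms in the appropriate coordinates, produces exactly the single bar word dual to $w_{a,\kappa}=\prod_{i=N}^{1} X_{\beta\omega}^{a_i-1}X_{\beta\kappa(i)}$. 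I would carry this out by: (i) writing $L_{a,\kappa}$ as a sum over $N$ nested integrals, each block of length $a_i$ consisting of $a_i-1$ logarithmic forms and one ``simple pole'' form, in the order $i=1,\dots,N$ from innermost to outermost; (ii) identifying which forms $\omega_{pq}$ these are in terms of the coordinates $z_\bullet$, checking that only $\omega_{\beta\omega}$ and the $\omega_{\beta\kappa(i)}$ survive the restriction to $U\mathfrak{f}_{m-2}$ (all other forms, e.g.\ $\omega_{ij}$ with $i,j\neq\beta$, pair to zero on $\mathfrak{f}_{m-2}$-monomials); and (iii) tracking the sign, which is $(-1)^N$ because each of the $N$ ``simple pole'' forms $\omega_{\beta\kappa(i)}=d\log(z_\beta-z_{\kappa(i)})$ contributes a $-1$ relative to the expansion $\frac{1}{1-u}=\sum u^k$ versus the $\frac{1}{u}$-type forms which contribute $+1$, and carefully checking the ordering convention in the bar/PBW pairing so that the word comes out as $\prod_{i=N}^1$ rather than $\prod_{i=1}^N$.

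The main obstacle I anticipate is step (ii)–(iii): getting the bookkeeping exactly right between three different conventions simultaneously — the sign and ordering convention in the isomorphism $\mathcal{V}(\mathcal{M}_{0,m})\cong(U\mathfrak{p}_m)^*$ (i.e.\ how $[\omega_{i_1}|\cdots|\omega_{i_r}]$ pairs with a PBW monomial), the direction of the path and which end is ``innermost'' in the iterated integral, and the relation $z_i=x_i\cdots x_n$ together with $y_i=z_{\kappa(i)}/z_{\kappa(i+1)}$ that translates the series \eqref{eq:intro L} into forms $\omega_{pq}$ on $\mathcal{M}_{0,m}$. A clean way to sidestep much of the pain is to prove the formula first for $n=N=1$ (where it is the statement $l_{(a)}\mapsto(-1)\cdot(\text{coeff.\ of }X_{\beta\omega}^{a-1}X_{\beta1})$, essentially the $\operatorname{Li}_a$ identity together with the sign from $-\log(1-x)$ in the $l_{(1)}$ example in the introduction), then handle general depth by the standard ``coproduct/shuffle'' or path-composition argument: the multivariable polylog's iterated integral factors through the forgetful maps, and the restriction to $U\mathfrak{f}_{m-2}$ kills the cross terms, leaving the product of the depth-one contributions. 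If the coproduct route is used, one needs that $l_{a,\kappa}$ is group-like or at least that its relevant component is determined by its primitive-dual values, which follows from the bar-construction formalism in \cite{brown_multiple_2009}; the remaining work is purely combinatorial identification of the resulting word with $w_{a,\kappa}$ and confirmation of the global sign $(-1)^N$.
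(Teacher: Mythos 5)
Your overall strategy (read off the coefficients of $l_{a,\kappa}$ on words in $X_{i\beta},X_{\beta\omega}$ from the iterated-integral presentation of $L_{a,\kappa}$) is in the right spirit, but it differs from the paper's route and has two genuine gaps. The paper does not attempt to write $l_{a,\kappa}$ as an explicit bar element; instead it computes the de Rham differential $dL_{a,\kappa}$ (Proposition \ref{prop: dL}), transports it through the injective map $\hat\iota$ to a coaction formula $\delta l_{a,\kappa}$ (Corollary \ref{cor: delta_l}), projects by $p_2$ onto the forms dual to $\mathfrak{f}_{m-2}$, and obtains a first-order recursion $l_{a,\kappa}(X_{i\beta}w')$, $l_{a,\kappa}(X_{\beta\omega}w')$ in terms of lower-weight polylogs; induction on $|a|$ then gives the formula and the sign $(-1)^N$ (one $-1$ per step stripping an $X_{\kappa(N)\beta}$, since $\omega_{i\beta}=dz_i/(z_i-1)=-dz_i/(1-z_i)$). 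Your plan instead requires knowing the full integrable word representing $l_{a,\kappa}$ in $\mathcal{V}(\mathcal{M}_{0,m})$, or at least its component on words in $\omega_{i\beta},\omega_{\beta\omega}$. For $N>1$ the classical one-dimensional iterated-integral formula for $L_a(y_1,\dots,y_N)$ is an integral along a path with the remaining variables as parameters, and converting it into coefficients of the abstract element $l_{a,\kappa}$ (which is only \emph{characterized} by $\iota(l_{a,\kappa})=L_{a,\kappa}$ plus integrability) is exactly the nontrivial step; your appeal to a ``coproduct/path-composition argument'' leaves this unargued. The clean way to supply it is precisely the differential-equation recursion above, so this part of your proposal, once made rigorous, collapses onto the paper's proof rather than bypassing it.

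The second gap is the vanishing on $U\mathfrak{p}_m\,s(\mathfrak{p}_{m-1})$, which you dismiss as ``already part of Theorem \ref{th: intro}.'' In the paper, Theorem \ref{th: intro} is itself deduced from this vanishing statement (Proposition \ref{prop:s-in-kernel}) together with Proposition \ref{prop:l-pairing}, so your reduction is circular. An independent argument is needed, and it is not a pointwise pairing computation: an element of $U\mathfrak{p}_m\,s(\mathfrak{p}_{m-1})$ is $u\cdot Y$ with $Y$ an arbitrary Lie element of $s(\mathfrak{p}_{m-1})$ and $u\in U\mathfrak{p}_m$ arbitrary, so one must first know that the joint kernel $\mathcal{L}_m^\perp$ is a left ideal (equivalently, that $\mathcal{L}_m$ is closed under the coaction $\delta$, which again rests on Proposition \ref{prop: dL} and Proposition \ref{prop: key}), and only then reduce to checking that $l_{a,\kappa}(X)=0$ for the degree-one generators $X_{\alpha i}$, $X_{ij}$ of $s(\mathfrak{p}_{m-1})$. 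Your proposal contains neither the submodule statement nor the generator check, so this half of the theorem is unproved as written.
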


Furthermore, we define the Lie subalgebras
$$
\mathfrak{k}_{m}= \mathcal{L}_{m}^\perp \cap \mathfrak{p}_{m} \subset \mathfrak{p}_{m},
$$
and compute them for $m=4, 5$.

\begin{Th}       \label{th: intro3}
  We have, 
  $$
\mathfrak{k}_4 = \mathbb{C} X_{\beta \omega}, \hskip 0.3cm
\mathfrak{k}_5 = \mathbb{C} X_{\beta \omega} \oplus [\lambda, \lambda] \oplus s(\mathfrak{p}_4),
  $$
  where $\lambda \subset \mathfrak{f}_3$ is the Lie ideal spanned by Lie words containing $X_{1\beta}$ and $X_{2\beta}$.
\end{Th}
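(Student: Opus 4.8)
The plan is to compute $\mathfrak{k}_m = \mathcal{L}_m^\perp \cap \mathfrak{p}_m$ directly from the description of $\mathcal{L}_m^\perp$ in Theorem \ref{th: intro}, exploiting the direct sum decomposition $U\mathfrak{p}_m = U\mathfrak{f}_{m-2} \oplus U\mathfrak{p}_m\, s(\mathfrak{p}_{m-1})$. Since $s(\mathfrak{p}_{m-1}) \subset \mathfrak{k}_m$ always, the content is to understand the intersection of $\mathfrak{f}_{m-2}$ with $\mathcal{L}_m^\perp$; equivalently, by Theorem \ref{th: intro2}, to find which Lie elements of $\mathfrak{f}_{m-2}$ have vanishing pairing against all the monomials $w_{a,\kappa} = \prod_{i=N}^{1} X_{\beta\omega}^{a_i-1}X_{\beta\kappa(i)}$. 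First I would set up the two small cases concretely: for $m=4$ we have $n=1$, $\mathfrak{f}_{m-2} = \mathfrak{f}_2$ free on $X_{1\beta}$ and $X_{\beta\omega}$, and the monomials $w_{a,\kappa}$ (here $\kappa$ is forced) run over $X_{\beta\omega}^{a_1-1}X_{1\beta}$, $X_{\beta\omega}^{a_2-1}X_{1\beta}X_{\beta\omega}^{a_1-1}X_{1\beta}$, and so on — i.e. all words ending in $X_{1\beta}$ with no two consecutive $X_{1\beta}$'s absent, more precisely all words of the form (block)(block)$\cdots$ where a block is $X_{\beta\omega}^{k}X_{1\beta}$. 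For $m=5$ we have $n=2$, $\mathfrak{f}_{m-2}=\mathfrak{f}_3$ free on $X_{1\beta}, X_{2\beta}, X_{\beta\omega}$, and $\kappa$ ranges over non-decreasing maps into $\{1,2\}$.

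Next I would translate "vanishing pairing with all $w_{a,\kappa}$" into a combinatorial condition on the support of a Lie element, written in the PBW/monomial basis of $U\mathfrak{f}_{m-2}$. The key observation is that the $w_{a,\kappa}$ are exactly the monomials in which $X_{\beta\omega}$ never appears at the far right and in which the letters $X_{i\beta}$ appear with non-decreasing indices reading left to right (reversing the product direction); their span is a combinatorially simple subspace $W_m \subset U\mathfrak{f}_{m-2}$, and $\mathfrak{f}_{m-2} \cap \mathcal{L}_m^\perp \cap (\text{part in } U\mathfrak{f}_{m-2})$ is the annihilator within $\mathfrak{f}_{m-2}$ of $W_m$ under the coefficient pairing. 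For $m=4$: a Lie element of $\mathfrak{f}_2$ pairs nontrivially with $X_{1\beta}$ (degree 1) unless it has no $X_{1\beta}$-component, leaving only $\mathbb{C}X_{\beta\omega}$; one then checks $X_{\beta\omega}$ itself pairs to zero with every $w_{a,\kappa}$ since all of those end in $X_{1\beta}$ — giving $\mathfrak{k}_4 = \mathbb{C}X_{\beta\omega}$. For $m=5$: I would argue that $\mathbb{C}X_{\beta\omega}$ and $[\lambda,\lambda]$ lie in the annihilator (elements of $[\lambda,\lambda]$, being Lie words in which $X_{1\beta}$ and $X_{2\beta}$ each occur at least twice, have support disjoint from the $w_{a,\kappa}$, which contain at most ... — here one uses that $w_{a,\kappa}$ has the $i$-indices weakly increasing, so the "decreasing" Lie monomials built from two or more copies of each generator cannot appear), and conversely that any Lie element of $\mathfrak{f}_3$ outside $\mathbb{C}X_{\beta\omega} \oplus [\lambda,\lambda] \oplus s(\mathfrak{p}_4)$ must pair nontrivially with some $w_{a,\kappa}$. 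The needed fact that $\lambda$ is a free Lie subalgebra and that $\mathfrak{f}_3/\lambda$, $\lambda/[\lambda,\lambda]$ have the expected generators (the single generator $X_{\beta\omega}$ for $\mathfrak{f}_3/\lambda$ and the $\ad(X_{\beta\omega})^k X_{i\beta}$ for $\lambda/[\lambda,\lambda]$) is standard and should be cited or recalled; it is what makes the complement count work out.

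I expect the main obstacle to be the converse inclusion in the $m=5$ case: showing that $\mathfrak{k}_5$ is no larger than the claimed sum. This amounts to a delicate count of how Lie monomials distribute across the monomial basis of $U\mathfrak{f}_3$ and which linear combinations can have all $w_{a,\kappa}$-coefficients zero — one must rule out, degree by degree, "accidental" cancellations among Lie words that are individually not of the special form but whose $w_{a,\kappa}$-coefficients nonetheless all vanish. The clean way to do this is to pick a bidegree refinement (by the number of occurrences of $X_{1\beta}$ and of $X_{2\beta}$), handle the bidegrees $(0,*)$, $(*,0)$, $(1,1)$ separately — these contain the only $w_{a,\kappa}$ and the only non-$[\lambda,\lambda]$ generators of $\lambda$ — and observe that in bidegree with both indices $\geq 2$ or with one index $0$ and the other $\geq 1$ there are no $w_{a,\kappa}$ at all except the pure-$X_{1\beta}$ or pure-$X_{2\beta}$ ones, which are of length $1$. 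Assembling these pieces, together with the always-present $s(\mathfrak{p}_4)$ summand and a dimension/Hilbert-series sanity check in low degrees, yields the stated formula.
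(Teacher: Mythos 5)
Your framing is the right one: $\mathfrak{k}_m=\mathfrak{l}_m\oplus s(\mathfrak{p}_{m-1})$ with $\mathfrak{l}_m=\mathfrak{f}_{m-2}\cap I_m$, and the $m=4$ case is correctly disposed of (the paper does it via the PBW fact $\mathfrak{g}\cap U\mathfrak{g}\,\mathfrak{h}=\mathfrak{h}$, which is cleaner than checking pairings but amounts to the same thing). The $m=5$ case, however, has genuine gaps. First, your combinatorial description of the monomials $w_{a,\kappa}$ is wrong: since $\kappa$ ranges over \emph{all} non-decreasing maps $\{1,\dots,N\}\to\{1,2\}$, the words $w_{a,\kappa}=z^{k_1}y\cdots z^{k_s}y\,z^{l_1}x\cdots z^{l_t}x$ occur in \emph{every} bidegree $(t,s)$, including $(0,s)$, $(t,0)$ and all $(t,s)$ with $t,s\geq 2$ (e.g.\ $yyxx$); so the claim that ``in bidegree with both indices $\geq 2$ \dots there are no $w_{a,\kappa}$ at all'' is false, and the easy inclusion $[\lambda,\lambda]\subset\ker\pi$ cannot be argued by disjointness of bidegrees. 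The correct reason is the ordering constraint: $[\lambda,\lambda]\subset(x)(y)$ (every monomial in $uv-vu$ with $u,v\in\lambda$ has an $x$ strictly to the left of a $y$), and $(x)(y)\subset I_5$ is complementary to the span of the $w_{a,\kappa}$. Relatedly, the elements $\ad_{X_{\beta\omega}}^kX_{i\beta}$ are \emph{not} generators of $\lambda/[\lambda,\lambda]$ --- they are not even in $\lambda$; they generate the complementary summands $\mathfrak{f}'_2(x,z)\oplus\mathfrak{f}'_2(y,z)$ of $\lambda$ in $\mathfrak{f}_3/\mathbb{C}z$.

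The more serious gap is the converse inclusion, which you correctly identify as the main obstacle but do not actually prove: ``a delicate count'' plus ``a dimension/Hilbert-series sanity check in low degrees'' is not an argument valid in all degrees. The paper's proof reduces the converse to showing that the induced map $\varphi\colon\lambda/[\lambda,\lambda]\to(\mathcal{L}_5^*)^{\geq(1,1)}$ is an isomorphism, and this requires two nontrivial inputs you omit: (i) surjectivity, established by an explicit unitriangular change of basis from the words $z^{k_1}y\cdots z^{l_t}x$ to products of $\ad_z$-monomials and then to iterated Lie brackets lying in $\lambda$ (Proposition \ref{prop:bases}); and (ii) injectivity, obtained by computing the full Poincar\'e series of $\lambda/[\lambda,\lambda]$ and matching it with that of $(\mathcal{L}_5^{*})^{\geq(1,1)}$, namely $s_1s_2/\bigl((1-s_1-t)(1-s_2-t)\bigr)$. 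The latter computation is where the real work lies: the paper derives the four-term exact sequence $0\to\lambda/[\lambda,\lambda]\to U(\mathfrak{f}_3/\lambda)\otimes(kx\oplus ky\oplus kz)\to U(\mathfrak{f}_3/\lambda)\to k\to 0$ from the long exact sequence in Lie algebra homology of $\lambda$ (using that $U\mathfrak{f}_3$ is a free $\lambda$-module); alternatively one can invoke Lazard elimination to exhibit $\lambda$ as an explicit free Lie algebra. Without one of these inputs, ruling out ``accidental cancellations'' degree by degree is exactly the unbounded verification your sketch leaves open.
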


\section{Quadratic duality for Lie algebras}      \label{sec: quadratic duality}

In this Section, we recall the notion of quadratic duality for Lie algebras and outline some of its applications.

\subsection{Lie algebras defined by quadratic relations}
\label{sec:quadratic-lie}

Let $V$ be a finite-dimensional vector space, $R \subset \bw^2 V$ be a subspace in the second exterior power of $V$, and $R^\perp \subset \bw^2 V^*$ be the the annihilator of $R$:
%
% $$
% R^\perp = \{ \alpha \in \bw^2 V^*; \, \langle \alpha, a\rangle =0 \,\, \forall 
% a \in \bw^2 V\}.
% $$
$$
R^\perp = \{ \alpha \in \bw^2 V^*; \, \langle \alpha, a\rangle =0 \,\, \forall 
a \in R\}.
$$
One can associate the following constructions to the pair $(R, R^\perp)$. First, one considers the graded commutative algebra $\mathcal{A}_R$ equipped with the canonical projection $\pi_R$
\begin{equation*}
    \mathcal{A}_R = \bw V/(R), \hskip 0.3cm \pi_R: \bw V \to \mathcal{A}_R,
\end{equation*}
where $(R)$ is the ideal in $\bw V$ generated by $R$. A morphism of quadratic algebras $F:\mathcal{A}_{R_1}\to \mathcal{A}_{R_2}$, where $R_1\subset\bw^2 V_1$ and $R_2\subset\bw^2 V_2$, is a $\mathbb{C}$-algebra morphism that preserves grading. It is easy to see that such a map is completely determined by its degree one component, which is a linear map
\begin{equation}
    \label{eq:deg-1-map}
f:V_1 \to V_2, \text{ such that}~ (f\wedge f)(R_1)\subset R_2.
\end{equation}

The graded commutative algebra $\mathcal{A}_R$ has a quadratic dual graded Lie algebra $\mathbb{L}_{R^{\perp}}$,
\begin{equation*}
\mathbb{L}_{R^{\perp}} = \mathbb{L}(V^*)/(R^\perp)_{\rm Lie}, \hskip 0.3cm 
(R^\perp)_{\rm Lie} = R^\perp + [R^\perp, \mathbb{L}(V^*)],
\end{equation*}
where $\mathbb{L}(V^*)$ is the free Lie algebra spanned by the vector space $V^*$, and $(R^\perp)_{\rm Lie}$ is the Lie ideal in $\mathbb{L}(V^*)$ generated by $R^\perp$. The grading is defined by assigning degree $1$ to elements of $V^* \subset \mathbb{L}_R$.
The universal enveloping algebra
\begin{equation*}
    U \mathbb{L}_R \cong {\rm Ass}(V^*)/(R^\perp)
\end{equation*}
is isomorphic to the free associative algebra ${\rm Ass}(V^*)$ spanned by $V^*$ modulo the two-sided ideal $(R^\perp)$ generated by $R^\perp$. It is a Hopf algebra with the standard cocommutative coproduct. Its (graded) dual is the commutative Hopf algebra $(U \mathbb{L}_{R})^*$, which can be described as a subspace of %the free associative algebra 
${\rm Ass}(V)$ equipped with the shuffle product and the deconcatenation coproduct. 
This subspace is
given by the condition that the projection maps 
$$
\wedge_k: V^{\otimes n}\xrightarrow{\id_{V^{\otimes k}} \otimes \, (\pi_R \circ \wedge^2) \otimes\id_{V^{\otimes l}}}
V^{\otimes k}\otimes \mathcal{A}_R \otimes V^{\otimes l}
%\qquad(k+l+2=n)
$$
vanish on elements of $(U \mathbb{L}_{R^\perp})^*$ for all $k+l=n-2$.
As a vector space, ${\rm Ass}(V)$ is spanned by monomials
$[\omega_1 | \omega_2 | \dots |\omega_l]$ with $\omega_i \in V$.
Elements of $(U \mathbb{L}_{R})^*$ are linear combinations of these
monomials contained in the joint kernel of $\wedge_k$'s.\\

%\subsection{Differentials and canonical elements}

For what follows, it is convenient to introduce a canonical element in the algebra $U\mathbb{L}_R \otimes \mathcal{A}_R$:
$$
\Omega = \sum_a e^a \otimes e_a \in V^* \otimes V \subset  U\mathbb{L}_R \otimes \mathcal{A}_R,
$$
where $\{ e_a\}$ is a basis of $V$ and $\{ e^a\}$ is the dual basis of $V^*$. Note that $[\Omega, \Omega]=0$. 

\begin{Prop} \label{prop:omegas}
Let $\Omega_i$ be a canonical element of $U\mathbb{L}_{R_i} \otimes \mathcal{A}_{R_i}$, for $i=1,2$. Let $f:V_1 \to V_2$ denote the degree one component linear map as in \eqref{eq:deg-1-map}, and $f^{*}$ its dual map. It holds
    \begin{equation} \label{eq:omegas}
        (f^{*}\otimes 1)\Omega_2 = (1 \otimes f)\Omega_1.
    \end{equation}
\end{Prop}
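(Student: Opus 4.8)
The plan is to unwind both sides of \eqref{eq:omegas} in terms of the chosen bases and reduce the identity to the trivial fact that a linear map and its dual are ``the same'' tensor, now written inside the relevant quadratic (co)algebras rather than in the full free constructions.

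First I would set up notation: fix a basis $\{e_a\}$ of $V_1$ with dual basis $\{e^a\}$ of $V_1^*$, and a basis $\{f_b\}$ of $V_2$ with dual basis $\{f^b\}$ of $V_2^*$. Write the degree one map as $f(e_a) = \sum_b f_{ba}\, f_b$, so that the dual map satisfies $f^*(f^b) = \sum_a f_{ba}\, e^a$. Then
\[
(1\otimes f)\Omega_1 = \sum_a e^a \otimes f(e_a) = \sum_{a,b} f_{ba}\, e^a \otimes f_b,
\]
the left tensor factor living in $\mathbb{L}_{R_1^\perp}\subset U\mathbb{L}_{R_1}$ and the right one in $\mathcal{A}_{R_2}$ (since $f(e_a)\in V_2$ maps to $\mathcal{A}_{R_2}$ under $\pi_{R_2}$). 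On the other side,
\[
(f^*\otimes 1)\Omega_2 = \sum_b f^*(f^b)\otimes f_b = \sum_{a,b} f_{ba}\, e^a \otimes f_b,
\]
where now the left factor $f^*(f^b) = \sum_a f_{ba} e^a$ lives in $V_1^*\subset U\mathbb{L}_{R_1}$ and the right factor in $\mathcal{A}_{R_2}$. The two expressions are manifestly equal as elements of $U\mathbb{L}_{R_1}\otimes\mathcal{A}_{R_2}$.

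The one genuine point to check, which I expect to be the only real subtlety, is that both sides indeed land in the \emph{same} tensor product $U\mathbb{L}_{R_1}\otimes\mathcal{A}_{R_2}$: the map $f^*\otimes 1$ must be interpreted as $f^*\colon V_2^*\to V_1^*$ composed with the inclusion $V_1^*\hookrightarrow U\mathbb{L}_{R_1}$ on the first factor, and as the identity on $V_2\subset\mathcal{A}_{R_2}$ on the second; dually, $1\otimes f$ must use $f\colon V_1\to V_2$ followed by $\pi_{R_2}\colon V_2\to\mathcal{A}_{R_2}$. The compatibility $(f\wedge f)(R_1)\subset R_2$ from \eqref{eq:deg-1-map} guarantees that $f$ descends to an algebra morphism $\mathcal{A}_{R_1}\to\mathcal{A}_{R_2}$ and that its transpose descends to a morphism $U\mathbb{L}_{R_2}\to U\mathbb{L}_{R_1}$, so all four corners of the square are well defined; but since $\Omega_1,\Omega_2$ live in degree $(1,1)$ where no relations have yet been imposed, the identity is really an identity of the linear-algebra tensors $\sum f_{ba}\,e^a\otimes f_b$, and the quadratic quotients play no role beyond specifying the ambient spaces. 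I would therefore phrase the proof as: expand in bases, observe both sides equal $\sum_{a,b} f_{ba}\, e^a\otimes f_b$, and note this is independent of the choice of bases (or, more invariantly, that both sides are the image of the identity in $\Hom(V_2,V_2)\cong V_2^*\otimes V_2$ under the two composites $V_2^*\otimes V_2 \to V_1^*\otimes V_2$ given by $f^*\otimes 1$ and $V_1^*\otimes V_1\to V_1^*\otimes V_2$ given by $1\otimes f$ applied to $\sum e^a\otimes e_a$). This makes the statement a restatement of the naturality of the evaluation/coevaluation pairing, and no serious obstacle arises.
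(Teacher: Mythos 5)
Your proof is correct and is essentially the paper's own argument written out in coordinates: the paper simply observes that both sides equal $f \in V_1^*\otimes V_2$ under the canonical identification $\Hom(V_1,V_2)\cong V_1^*\otimes V_2$, which is exactly the tensor $\sum_{a,b} f_{ba}\, e^a\otimes f_b$ you compute. No further comment needed.
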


\begin{proof}
It is easy to see that both sides of \eqref{eq:omegas} are equal to $f \in V_1^{*}\otimes V_2$.
\end{proof}

We also introduce a differential on the space 
$(U\mathbb{L}_R)^* \otimes \mathcal{A}_R$,
%
% \begin{equation}\label{eq: d}
% d \in {\rm End}((U\mathbb{L}_R)^* \otimes \mathcal{A}_R);\hskip 0.3cm
% [\omega_1| \dots|\omega_k] \otimes \alpha \mapsto
% [\omega_1| \dots|\omega_{k-1}] \otimes (\omega_k \wedge \alpha)
% \end{equation}
\begin{equation}\label{eq: d}
\begin{split}
d : (U\mathbb{L}_R)^* \otimes \mathcal{A}_R & \to (U\mathbb{L}_R)^* \otimes \mathcal{A}_R \\
[\omega_1| \dots|\omega_k] \otimes \alpha & \mapsto
[\omega_1| \dots|\omega_{k-1}] \otimes (\omega_k \wedge \alpha).
\end{split}
\end{equation}

\begin{Prop}
 The differential \eqref{eq: d} satisfies
 $$
%[\Omega, \Omega]=0, \hskip 0.3cm 
d^2=0.
 $$
\end{Prop}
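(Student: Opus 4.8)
The plan is to reduce $d^2$ to one of the projection maps $\wedge_j$ whose joint kernel cuts out $(U\mathbb{L}_R)^*$ inside $\mathrm{Ass}(V)$, and then to invoke that vanishing. Since $d$ is linear and $(U\mathbb{L}_R)^*$ is graded, it suffices to check $d^2(\xi\otimes\alpha)=0$ for $\xi\in(U\mathbb{L}_R)^*$ homogeneous, say of degree $k$, and $\alpha\in\mathcal{A}_R$. For $k\le 1$ the second application of $d$ would strip a letter from an empty word and hence vanishes, so I may assume $k\ge 2$ and write $\xi=\sum_I c_I\,[\omega_{I_1}|\cdots|\omega_{I_k}]$ with $\omega_{I_j}\in V$.

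Next I would simply unwind the definition of $d$ twice. The first application gives $d(\xi\otimes\alpha)=\sum_I c_I\,[\omega_{I_1}|\cdots|\omega_{I_{k-1}}]\otimes(\omega_{I_k}\wedge\alpha)$, and a second application strips off $\omega_{I_{k-1}}$, so that
\[
d^2(\xi\otimes\alpha)=\sum_I c_I\,[\omega_{I_1}|\cdots|\omega_{I_{k-2}}]\otimes\big(\omega_{I_{k-1}}\wedge\omega_{I_k}\wedge\alpha\big).
\]
The key point is that in $\mathcal{A}_R$ the triple product factors as $\omega_{I_{k-1}}\wedge\omega_{I_k}\wedge\alpha=\big((\pi_R\circ\wedge^2)(\omega_{I_{k-1}}\otimes\omega_{I_k})\big)\cdot\alpha$. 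Letting $m_\alpha\colon\mathcal{A}_R\to\mathcal{A}_R$ be multiplication by $\alpha$, this yields
\[
d^2(\xi\otimes\alpha)=\big(\id_{V^{\otimes(k-2)}}\otimes m_\alpha\big)\big(\wedge_{k-2}(\xi)\big),
\]
where $\wedge_{k-2}\colon V^{\otimes k}\to V^{\otimes(k-2)}\otimes\mathcal{A}_R$ is the projection map introduced above (the instance with $l=0$). Since $\xi\in(U\mathbb{L}_R)^*$ lies in the joint kernel of all the $\wedge_j$, in particular $\wedge_{k-2}(\xi)=0$, and therefore $d^2(\xi\otimes\alpha)=0$; summing over homogeneous components gives $d^2=0$ in general.

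For completeness I would also note that $d$ genuinely maps $(U\mathbb{L}_R)^*\otimes\mathcal{A}_R$ into itself, so that $d\circ d$ is the intended composition: the same "pull the wedged letter out of the $\mathcal{A}_R$-slot" identity shows $\wedge_j\big(d(\xi\otimes\alpha)\big)=(\id\otimes m_\alpha)(\wedge_j\xi)=0$ for every admissible $j$. Equivalently, one can phrase the whole argument through the canonical element: $d$ is the operator $\xi\otimes\alpha\mapsto\sum_a(e^a\cdot\xi)\otimes(e_a\wedge\alpha)$ obtained by pairing $\Omega\in U\mathbb{L}_R\otimes\mathcal{A}_R$ against the last letter, and then $d^2$ is governed by $\Omega^2$, whose vanishing is exactly $[\Omega,\Omega]=0$ (with $\Omega$ odd in the $\mathcal{A}_R$-degree). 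I do not expect a genuine obstacle here; the only care needed is bookkeeping — tracking which slot each letter ends up in, checking that the $l=0$ instance of $\wedge_j$ is precisely what $d^2$ produces, and dispatching the trivial low-degree cases — while the substantive input, that $(U\mathbb{L}_R)^*$ is annihilated by the $\wedge_j$'s, is already part of the quadratic-duality setup recalled above.
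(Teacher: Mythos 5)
Your proof is correct and follows essentially the same route as the paper's: both reduce $d^2$ to the statement that the wedge of the last two stripped letters vanishes in $\mathcal{A}_R$. The paper asserts this tersely as $\omega_{k-1}\wedge\omega_k=0$ in $\mathcal{A}_R$ on a single monomial, which is only meaningful when read on the linear combinations cutting out $(U\mathbb{L}_R)^*$ inside $\mathrm{Ass}(V)$; your identification of $d^2$ with the $l=0$ projection $\wedge_{k-2}$, annihilating $(U\mathbb{L}_R)^*$ by definition, makes that implicit step explicit and is the correct justification.
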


\begin{proof}
%     For the first equality, we compute
%     %
%     $$
% [\Omega, \Omega] = \sum_{a,b} [e^a, e^b] \otimes e_a \wedge e_b \in  (\bw^2 V^*/R^\perp) \otimes (\bw^2 V/R).
%     $$
%     The sum over $a,b$ is the canonical element in $ \bw^2 V^* \otimes \bw^2 V $. Since we quotient these spaces by the subspace $R$ and its annihilator $R^\perp$, the result vanishes, as required.
% For the second equality, we compute
We compute
    $$
d^2: [\omega_1| \dots|\omega_k] \otimes \alpha \mapsto
[\omega_1| \dots|\omega_{k-2}] \otimes (\omega_{k-1} \wedge \omega_k \wedge \alpha) =0,
    $$
    %Here, we have used that 
    since $\omega_{k-1} \wedge \omega_k = 0$ 
    in $\mathcal{A}_R$.

\end{proof}

\subsection{Modules and actions}

In this Section, we discuss $\mathbb{L}_R$-modules and the ways to define them.
As in the previous Section, let $V$ be a vector space, $R \subset \bw^2 V$, $R^\perp \subset \bw^2 V^*$, $\mathbb{L}_{R}$ the Lie algebra defined in the previous section, and $M$ an $\mathbb{L}_{R}$-module. Since the Lie algebra $\mathbb{L}_{R}$ is generated by elements of $V^*$, the $\mathbb{L}_{R}$-action on $M$ is completely defined by either the map
$$
a: V^* \otimes M \to M,
$$
or alternatively by its transpose $\delta$,
$$
\delta: M^* \to M^* \otimes V.
$$
% To the map $a$, one can associate a map
% %
% $$
% a \circ (1 \otimes a): (V^*)^{\otimes 2} \otimes M \to M,
% $$
% and 
To the map $\delta$ one can associate a map 
$d_M \in {\rm End}(M^* \otimes \mathcal{A}_R)$ defined by the formula
$$
d_M: m^* \otimes \alpha  \mapsto (\delta(m^*))' \otimes (\delta(m^*))'' \wedge \alpha,
$$
where we have used the Sweedler notation for $\delta(m^*) \in M^* \otimes V $.

\begin{Ex}
    $U\mathbb{L}_{R}$ is an $\mathbb{L}_{R}$-module under multiplication on the left with the map $a: x \otimes u \mapsto xu$. The dual map $\delta$ is given by the deconcatenation
    $$
[\omega_1| \dots| \omega_k] \mapsto [\omega_1| \dots| \omega_{k-1}]
\otimes \omega_k,
    $$
    and the corresponding map $d_{U\mathbb{L}_{R}}$ is given by equation \eqref{eq: d}.
\end{Ex}

\begin{Prop}       \label{prop: key}
For a map $a: V^* \otimes M \to M$ and its dual $\delta$, the following statements are equivalent:
\begin{enumerate}[label=(\roman*)]
    \item \label{prop:map-a} The map $a$ defines an $\mathbb{L}_{R}$-action on $M$.
    \item \label{prop:map-delta} The map $d_M$ associated to $\delta$ is a differential, $d_M^2=0$.
\end{enumerate}
 
\end{Prop}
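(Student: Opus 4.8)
The plan is to reduce each of \ref{prop:map-a} and \ref{prop:map-delta} to one and the same condition on the iterated comultiplication
$$\delta^{(2)} := (\delta\otimes\id_V)\circ\delta\colon M^*\longrightarrow M^*\otimes V\otimes V,$$
namely that $(\id_{M^*}\otimes q)\bigl(\delta^{(2)}(M^*)\bigr)\subseteq M^*\otimes R$, where $q\colon V\otimes V\to\bw^2 V$ is the canonical projection and $R\subseteq\bw^2 V$.

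First I would reformulate \ref{prop:map-a}. The datum $a$ is the same as a linear map $V^*\to\End(M)$, which by the universal property of the free Lie algebra extends uniquely to a Lie algebra homomorphism $\varphi\colon\mathbb{L}(V^*)\to\End(M)$; so $a$ always makes $M$ an $\mathbb{L}(V^*)$-module, and \ref{prop:map-a} holds exactly when $\varphi$ descends to $\mathbb{L}_R=\mathbb{L}(V^*)/(R^\perp)_{\rm Lie}$, i.e.\ when $(R^\perp)_{\rm Lie}\subseteq\ker\varphi$. As $\ker\varphi$ is a Lie ideal and $(R^\perp)_{\rm Lie}$ is the Lie ideal generated by $R^\perp$, this is equivalent to $R^\perp\subseteq\ker\varphi$, that is, to: every $\rho\in R^\perp$ acts by $0$ on $M$. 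The degree-two component of the action is the map $a^{(2)}\colon V^*\otimes V^*\otimes M\to M$, $\xi\otimes\eta\otimes m\mapsto\xi\cdot(\eta\cdot m)$, and under the identification $\mathbb{L}^2(V^*)=\bw^2 V^*$ an element $\rho=\sum_\ell\xi_\ell\wedge\eta_\ell$ of $R^\perp$ acts on $m$ by $a^{(2)}\bigl((\iota\rho)\otimes m\bigr)$, where $\iota\colon\bw^2 V^*\hookrightarrow V^*\otimes V^*$ is the antisymmetrization $\xi\wedge\eta\mapsto\xi\otimes\eta-\eta\otimes\xi$. Hence \ref{prop:map-a} holds iff $a^{(2)}$ vanishes on $\iota(R^\perp)\otimes M$. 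Transposing $a^{(2)}$ produces $\delta^{(2)}$ up to the flip $\tau$ of the two $V$-factors (one verifies this by evaluating $\langle\xi\cdot(\eta\cdot m),m^*\rangle$ in terms of $\delta$), so the vanishing is equivalent to $\delta^{(2)}(M^*)\subseteq M^*\otimes W$, where $W\subseteq V\otimes V$ denotes the annihilator of $\iota(R^\perp)$. Over $\mathbb{C}$ we have $V\otimes V=S^2 V\oplus\bw^2 V$ with $q$ the projection onto the second summand, and $W=S^2 V\oplus R=q^{-1}(R)$ — using that $S^2 V$ annihilates $\bw^2 V^*$ and that $(R^\perp)^\perp=R$ inside $\bw^2 V$. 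Thus \ref{prop:map-a} is equivalent to $(\id\otimes q)\bigl(\delta^{(2)}(M^*)\bigr)\subseteq M^*\otimes R$.

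Next I would reformulate \ref{prop:map-delta}. Unwinding the definition \eqref{eq: d} of $d_M$ twice gives, for $\delta^{(2)}(m^*)=\sum_k n_k^*\otimes v_k\otimes w_k$,
$$d_M^2\bigl(m^*\otimes\alpha\bigr)=\sum_k n_k^*\otimes\bigl(\pi_R(v_k\wedge w_k)\cdot\alpha\bigr)\in M^*\otimes\mathcal{A}_R.$$
Taking $\alpha=1$ shows that $d_M^2=0$ forces $\sum_k n_k^*\otimes\pi_R(v_k\wedge w_k)=0$, i.e.\ $(\id\otimes q)\bigl(\delta^{(2)}(m^*)\bigr)\in M^*\otimes R$ for all $m^*$; conversely, if $\sum_k n_k^*\otimes(v_k\wedge w_k)\in M^*\otimes R$, then, since $(R)$ is an ideal of $\bw V$, the element $\sum_k n_k^*\otimes\bigl(\pi_R(v_k\wedge w_k)\cdot\alpha\bigr)$ vanishes for every $\alpha$, whence $d_M^2=0$. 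So \ref{prop:map-delta} is equivalent to $(\id\otimes q)\bigl(\delta^{(2)}(M^*)\bigr)\subseteq M^*\otimes R$ — precisely the condition obtained for \ref{prop:map-a} (the flip $\tau$ contributes only an overall sign once $q$ is applied), and the equivalence follows.

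The one real difficulty is the bookkeeping: identifying the transpose of the iterated action map (which of $\xi,\eta$ acts ``first''), tracking the sign produced by the flip $\tau$ on passing to $\bw^2 V$, and computing the annihilator $W=S^2 V\oplus R$, for which one uses $\dim V<\infty$ and $\operatorname{char}\mathbb{C}=0$. Everything else — the explicit formulas for $a^{(2)}$, $\delta^{(2)}$, and $d_M^2$ — is a routine unwinding of the definitions.
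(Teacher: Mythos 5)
Your proof is correct and follows essentially the same route as the paper's: both arguments reduce \ref{prop:map-a} to the condition that $R^\perp$ acts by zero (equivalently, that $a\circ(1_{V^*}\otimes a)$ kills $\iota(R^\perp)\otimes M$) and reduce \ref{prop:map-delta} to the condition that $(1_{M^*}\otimes\wedge)\circ(\delta\otimes 1_V)\circ\delta$ lands in $M^*\otimes R$, then identify these as transposes of one another. The paper states this duality in one line; you have merely filled in the bookkeeping (the annihilator $q^{-1}(R)=S^2V\oplus R$, the $\alpha=1$ evaluation and the ideal argument for the converse), all of which is sound.
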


\begin{proof}
To show the equivalence of \ref{prop:map-a} and \ref{prop:map-delta}, observe that the map $a$ defines an action on $M$ of the free Lie algebra generated by $V^*$. This action descends to an action of $\mathbb{L}_R$ if and only if the relations $R^\perp \subset \bw^2 V^*$ act by zero on $M$, and this is equivalent to $R^\perp \otimes M \subset {\rm ker}(a \circ (1_{V^{*}} \otimes a))$, where
$$
a \circ (1_{V^{*}} \otimes a): (V^*)^{\otimes 2} \otimes M \to M.
$$

The map $d_M$ squares to zero if and only if the map
$$
(1_{M^{*}} \otimes \wedge)\circ ( \delta \otimes 1_V) \circ \delta: M^* \to M^* \otimes  \bw^2 V
$$
takes values in $M^* \otimes R$, and this statement is the dual of $R^\perp \otimes M \subset \ker \left(a \circ (1_{V^{*}} \otimes a) \right)$.
   
\end{proof}

%\begin{Prop}     \label{prop:modules functorial}
%    For two $U\mathbb{L}_R$-modules $M_1, M_2$, a linear map $\phi: M_1 \to M_2$ is a module morphism if and only if the induced map $M_2^* \otimes \mathcal{A}_R \to M_1^* \otimes \mathcal{A}_R$ is a chain map with respect to the differentials $d_{M_1}$ and $d_{M_2}$.
%\end{Prop}

%\begin{proof}
   
%\end{proof}

For future use, we also recall the definition and basic properties of modules with a cyclic vector.
Let $\mathfrak{g}$ be a Lie algebra and $M$ be a $\mathfrak{g}$-module (or, equivalently, a $U\mathfrak{g}$-module). We say that $v \in M$ is a cyclic vector if $U\mathfrak{g} \cdot v = M$. There is an equivalence between $\mathfrak{g}$-modules with a cyclic vector and left ideals in $U\mathfrak{g}$; in more detail, to a left ideal $I \subset U\mathfrak{g}$ corresponds the $\mathfrak{g}$-module $M_I=U\mathfrak{g}/I$ with the cyclic vector $v = I \in M_I$, and to a $\mathfrak{g}$-module with a cyclic vector $(M, v)$ corresponds the left ideal
$$
I_{(M, v)} = \{ a \in U\mathfrak{g}; \, a \cdot v =0\}.
$$
From now on, as will be the case in subsequent parts of this note, we assume the Lie algebra $\mathfrak{g}$ and the $\mathfrak{g}$-module $M$ to be graded with finite-dimensional graded components. For a module with a cyclic vector $(M, v)$, the (graded) dual module $M^*$ can be realized as a subspace of $(U\mathfrak{g})^*$:
$$
M^* \cong \{ \alpha \in (U\mathfrak{g})^*; \, \langle \alpha, a\rangle =0 \,\, \forall a \in I_{(M,v)}\}.
$$

\begin{Prop}      \label{prop:new}
    Let $\mathcal{L} \subset (U\mathfrak{g})^*$ be a $U\mathfrak{g}$-submodule. Then, the dual module $\mathcal{L}^*$ has a cyclic vector, and the corresponding left ideal is equal to $\mathcal{L}^\perp$.
\end{Prop}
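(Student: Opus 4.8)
The plan is to transport the tautological cyclic structure of $U\mathfrak{g}/\mathcal{L}^\perp$ onto $\mathcal{L}^*$ along a natural isomorphism, using nothing beyond the defining pairing between $U\mathfrak{g}$ and $(U\mathfrak{g})^*$ and the finiteness of graded components. Recall that $(U\mathfrak{g})^*$ carries the $U\mathfrak{g}$-module structure induced by left multiplication on $U\mathfrak{g}$, characterized by $\langle \phi\cdot x, u\rangle = \langle \phi, xu\rangle$ for $x,u\in U\mathfrak{g}$ and $\phi\in(U\mathfrak{g})^*$. With respect to this structure a graded subspace $\mathcal{L}$ is a submodule if and only if $\mathcal{L}^\perp=\{u\in U\mathfrak{g}:\langle\mathcal{L},u\rangle=0\}$ is a left ideal, since for $\phi\in\mathcal{L}$, $u\in\mathcal{L}^\perp$ and $x\in U\mathfrak{g}$ one has $\langle\phi,xu\rangle=\langle\phi\cdot x,u\rangle=0$. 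In particular, part of the assertion — that $\mathcal{L}^\perp$ is a left ideal — comes for free.

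First I would identify $\mathcal{L}^*$ with $U\mathfrak{g}/\mathcal{L}^\perp$ as graded vector spaces. Define $\theta\colon U\mathfrak{g}/\mathcal{L}^\perp\to\mathcal{L}^*$ by $\theta(\bar u)=\langle-,u\rangle|_{\mathcal{L}}$. It is well defined and injective by the very definition of $\mathcal{L}^\perp$, and it is surjective because in each degree $n$ the pairing $(U\mathfrak{g})_n\times(U\mathfrak{g})^*_n\to\mathbb{C}$ is perfect, so that $\mathcal{L}_n$ and $(\mathcal{L}^\perp)_n$ are mutual annihilators and $\dim(U\mathfrak{g}/\mathcal{L}^\perp)_n=\dim(U\mathfrak{g})_n-\dim(\mathcal{L}^\perp)_n=\dim\mathcal{L}_n=\dim(\mathcal{L}^*)_n$.

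Next I would check that $\theta$ is $U\mathfrak{g}$-equivariant. The dual module $\mathcal{L}^*$ carries the action determined by $\langle x\cdot\psi,\phi\rangle=\langle\psi,\phi\cdot x\rangle$, and then
\[
\langle x\cdot\theta(\bar u),\phi\rangle=\langle\theta(\bar u),\phi\cdot x\rangle=\langle\phi\cdot x,u\rangle=\langle\phi,xu\rangle=\langle\theta(\overline{xu}),\phi\rangle ,
\]
so $\theta$ is an isomorphism of $U\mathfrak{g}$-modules. Finally, $U\mathfrak{g}/\mathcal{L}^\perp$ is a module with cyclic vector $\bar 1$, whose associated left ideal under the equivalence recalled earlier in this section is $\{a\in U\mathfrak{g}:a\cdot\bar 1=0\}=\mathcal{L}^\perp$. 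Transporting along $\theta$, the module $\mathcal{L}^*$ has the cyclic vector $v:=\theta(\bar 1)$ — explicitly, $v$ is evaluation at $1\in U\mathfrak{g}$ — and its associated left ideal is $\mathcal{L}^\perp$, as claimed.

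I do not expect a genuine obstacle: the argument is purely formal. The only point requiring care is the bookkeeping of the left-versus-right module conventions on $(U\mathfrak{g})^*$ and on $\mathcal{L}^*$, so that ``submodule of $(U\mathfrak{g})^*$'' lines up with ``left ideal of $U\mathfrak{g}$''; once these are pinned down, every step reduces to the tautology $\langle\phi\cdot x,u\rangle=\langle\phi,xu\rangle$ together with the graded finite-dimensionality used for biduality and the dimension count.
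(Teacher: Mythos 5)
Your argument is correct and is essentially the paper's: the map $\theta$ you construct is exactly the isomorphism induced by the projection $\pi\colon U\mathfrak{g}\to\mathcal{L}^*$ dual to the inclusion $\mathcal{L}\rightarrowtail(U\mathfrak{g})^*$, and in both cases the cyclic vector is the image of $1$ and the associated left ideal is $\ker\pi=\mathcal{L}^\perp$. You merely spell out the equivariance check and the graded dimension count for surjectivity that the paper leaves implicit.
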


\begin{proof}
Let $\pi: U\mathfrak{g} \to \mathcal{L}^*$ be the projection dual to the injection
$\mathcal{L} \rightarrowtail (U\mathfrak{g})^*$. Since the latter is a $U\mathfrak{g}$-module morphism, so is the former. Hence, $v = \pi(1)$ is a cyclic vector in $\mathcal{L}^*$. The corresponding left ideal is the kernel of $\pi$, which coincides with $\mathcal{L}^\perp$, as required.
\end{proof}

\section{Lie algebras $\mathfrak{p}_m$ and $U\mathfrak{p}_{m}$-modules}
\label{sec: sperical braids}

In this Section, we define the Lie algebras of infinitesimal spherical pure braids $\mathfrak{p}_m$  and construct several $U\mathfrak{p}_{m}$-modules
using the maps $a$ and $\delta$ (as in Proposition \ref{prop: key}). 

\subsection{Lie algebras $\mathfrak{p}_m$}

As an example of the construction of the previous section, we consider the moduli space of genus zero curves with $m$ marked points
$\mathcal{M}_{0, m}$. It is a standard fact that the cohomology $ \mathcal{A}=H^\bullet(\mathcal{M}_{0, m})$ is generated by $V = H^1(\mathcal{M}_{0, m})$ modulo quadratic Arnold relations $R$ (see \eqref{eq:Arnold}). It is convenient to label configurations using the set $\mathcal{S}=\{ \alpha, 1, \dots, n, \beta, \omega\}$; that is,  
    $$
z_\alpha=0, z_1, \dots, z_n, z_\beta=1, z_\omega=\infty,
    $$
    where $n=m-3$. The dimension of $V$ is equal to $n(n+3)/2$, and one possible choice of generators is as follows:
\begin{align*}
\omega_{\alpha i} & = \frac{dz_i}{z_i};  \quad %\hskip 0.3cm 
\omega_{i \beta} = \frac{dz_i}{z_i -1}, \hskip 0.3cm 1\le i\le n; \\ 
\omega_{ij} & = \frac{ d(z_i - z_j)}{z_i - z_j}, \hskip 0.3cm 1\le  i<j\le n.
\end{align*}
It is sometimes more convenient to use coordinates $(x_1, \dots, x_n)$ and their products $u_{ij}$ defined by
\begin{equation}      \label{eq: xyz}
z_i = x_i \dots x_n, \hskip 0.3cm u_{ij} = x_i \dots x_{j-1}.
\end{equation} 
In this notation, we have
$$
\omega_{\alpha i} = \sum_{k=i}^n \frac{dx_k}{x_k}, \hskip 0.3cm
\omega_{ij} = \sum_{k=j}^n \frac{dx_k}{x_k} +\frac{du_{ij}}{u_{ij}-1}.
$$

    The dual Lie algebra $\mathbb{L}_{R^{\perp}}$ is the Lie algebra of infinitesimal spherical pure braids $\mathfrak{p}_{m}$ with generators $X_{ab}=X_{ba}, a\neq b$, and relations
    \begin{equation}     \label{eq:relations p}
[X_{ab}, X_{cd}]=0, \hskip 0.3cm \sum_{b \neq a} X_{ab} =0,
    \end{equation}
    where $a,b,c,d \in \mathcal{S}$ are all distinct.
    One can choose $n(n+3)/2$ linearly independent generators of $V^*$:
$$
%\begin{align*}
 % & 
  X_{\alpha i},X_{i \beta}, \hskip 0.3cm 1\le i\le n; \\ %i=1, \dots, n; \
  %& 
  \hskip 0.3cm
  X_{ij}, \hskip 0.3cm 1 \leq i <j \leq n.  
%\end{align*}
$$
It is easy to see that relations \eqref{eq:relations p} imply
\begin{equation*} \label{eq:4T}
[X_{ac}+X_{bc}, X_{ab}]=0.
\end{equation*}
Sometimes, it will also be convenient to use the generator
$$
X_{\beta \omega} = \sum_i X_{\alpha i} + \sum_{i<j} X_{ij}
$$
instead of $X_{\alpha n}$. 
Observe that 
%$X_{\alpha i},X_{ij}$ are centralizers of $X_{\beta \omega}$, 
the Lie brackets of $X_{\beta \omega}$ with  generators $X_{\alpha i}, X_{ij}$ vanish. 
The dual Hopf algebra $(U \mathfrak{p}_{m})^*$ is the reduced bar algebra
    $\mathcal{V}(\mathcal{M}_{0, m})$. Its elements are presented as linear combinations of expressions
    $
[\omega_1| \dots| \omega_k],
    $
    where $\omega_i$'s are generators of $H^1(\mathcal{M}_{0, m})$.

\subsection{Lie subalgebras of $\mathfrak{p}_{m}$}
\label{subsection:section-map}

For each choice of %$t \in \{ 1, \dots, m\}$ 
$t \in \mathcal{S}$, there is a short exact sequence of Lie algebras
$$
0 \to \mathfrak{f}_{m-2} \to \mathfrak{p}_m \to \mathfrak{p}_{m-1} \to 0.
$$
Here, the free Lie algebra $\mathfrak{f}_{m-2}$ is generated by
$X_{tv}$, $v\neq t$, subject to the linear relation $\sum_{v \neq t} X_{tv}=0$; the map $ \pi_t : \mathfrak{p}_m \to \mathfrak{p}_{m-1}$ maps generators $X_{tv}, v\neq t$ to zero. 
% This map admits a section $s_w$ for each 
% $w \neq t$. In more detail, $s_w$ is an injective Lie homomorphism given by
% %
% $$
% s_w: 
% \begin{cases}
%     x_{ab} \mapsto x_{ab} & {\rm if} \,\, a,b \neq w, \\
%     x_{aw} \mapsto x_{aw} + x_{au} &
% \end{cases}
% $$
% It can also be viewed as a ``cabling map'' replacing the ``strand'' $w$ with two strands $t$ and $w$.
In what follows,  we choose $t=\beta$ and fix the generators of $\mathfrak{f}_{m-2}$ to be  $X_{i\beta}, i=1, \dots, n$ and $X_{\beta \omega}$. 
The map $ \pi_\beta : \mathfrak{p}_m \to \mathfrak{p}_{m-1}$ admits a section $s : \mathfrak{p}_{m-1} \to \mathfrak{p}_{m}$, which is defined as follows: labeling  $m-1$ indices of $\mathfrak{p}_{m-1}$ by $ \{\alpha,1,\ldots,n,\omega \}$, $s$ sends the generators $X_{\alpha i}$ $(1 \leq i \leq n-1)$ and $X_{ij}$ $(1 \leq i,j \leq n)$ of $\mathfrak{p}_{m-1}$ to the corresponding generators of $\mathfrak{p}_{m}$. 

\begin{Rem}
Geometrically, this section map corresponds to the homotopy class of maps
\begin{align*}
    \mathcal{M}_{0,m-1} & \to \mathcal{M}_{0,m} \\
    (z_{\alpha} = 0, z_1, \ldots, z_{n-1}, z_n =1, z_\omega = \infty) & \mapsto (z_{\alpha}, z_1, \ldots, z_{n}, z_\beta = 1 + \varepsilon(z_1,\dots, z_n)  , z_\omega)
\end{align*}
where $\varepsilon$ is an arbitrary real continuous function of the $z_i$'s such that for all $1\leq i\leq n-1$, $0<\varepsilon<|1-z_i|$.

\end{Rem}

A derivation $D$ of a free Lie algebra $\mathfrak{f}(x_1, \dots, x_k)$ is called tangential if $D(x_i)=[x_i, v_i]$ for some elements $v_i \in \mathfrak{f}(x_1, \dots, x_k)$. %$D$ is an inner derivation if $v_i=v$ for all $i$. 

\begin{Prop}       \label{prop: tangential action}
   The action of $s(\mathfrak{p}_{m-1})$ on $\mathfrak{f}(X_{1\beta}, \dots, X_{n\beta}, X_{\beta\omega})$ is by tangential derivations. The generator $X_{\beta \omega}$ is in the kernel of this action. 
\end{Prop}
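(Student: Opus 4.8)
The plan is to make the action of $s(\mathfrak{p}_{m-1})$ on the normal subalgebra $\mathfrak{f}_{m-2} = \mathfrak{f}(X_{1\beta},\dots,X_{n\beta},X_{\beta\omega})$ completely explicit on generators, and then observe that it is tangential by inspection. First I would recall that since $\mathfrak{f}_{m-2}\subset\mathfrak{p}_m$ is an ideal, any element $y\in\mathfrak{p}_m$ acts on it by $\mathrm{ad}_y$, and in particular $s(x)$ for $x\in\mathfrak{p}_{m-1}$ acts by $\mathrm{ad}_{s(x)}$. So it suffices to compute $[s(X_{\alpha i}), X_{j\beta}]$, $[s(X_{ij}),X_{k\beta}]$, $[s(X_{\alpha i}),X_{\beta\omega}]$ and $[s(X_{ij}),X_{\beta\omega}]$ inside $\mathfrak{p}_m$, using only the defining relations \eqref{eq:relations p}: the vanishing bracket $[X_{ab},X_{cd}]=0$ for disjoint index pairs, the sum relations $\sum_{b\neq a}X_{ab}=0$, and the derived four-term relation $[X_{ac}+X_{bc},X_{ab}]=0$.

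The key computation is the bracket $[s(X_{\alpha i}), X_{j\beta}]$. When $i\neq j$ the index pairs $\{\alpha,i\}$ and $\{j,\beta\}$ are disjoint, so the bracket vanishes — which is exactly of the form $[X_{j\beta},v_j]$ with $v_j=0$. When $i=j$, the pairs $\{\alpha,i\}$ and $\{i,\beta\}$ share the index $i$; here I would invoke the four-term relation $[X_{\alpha i}+X_{\beta i},X_{i\beta}]=0$ (or the analogue obtained by closing up the triangle on indices $\alpha,i,\beta$ and using $\sum_v X_{iv}=0$ to trade $X_{\alpha i}$ for a sum of the remaining generators), which rewrites $[X_{\alpha i},X_{i\beta}]$ as $-[X_{\beta i},X_{i\beta}]=0$, or more to the point as $[X_{i\beta}, w]$ for a suitable $w\in\mathfrak{f}_{m-2}$. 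The point is that every $X_{cd}$ appearing in $s(X_{\alpha i})$ or $s(X_{ij})$ either is disjoint from $\{j,\beta\}$ (killing the bracket) or shares exactly one index with it, and in the latter case the four-term relation expresses the bracket with $X_{j\beta}$ as $[X_{j\beta},\,\cdot\,]$ with the second slot filled by an element of $\mathfrak{f}_{m-2}$ — so on each generator $X_{j\beta}$ the derivation $\mathrm{ad}_{s(x)}$ outputs something in $[X_{j\beta},\mathfrak{f}_{m-2}]$, i.e.\ it is tangential. One must also check the generator $X_{\beta\omega}$: but it was already observed in Section~\ref{subsection:section-map} that $[X_{\beta\omega},X_{\alpha i}]=0$ and $[X_{\beta\omega},X_{ij}]=0$, so $[s(x),X_{\beta\omega}]=0$ for all $x\in\mathfrak{p}_{m-1}$; this proves both that $X_{\beta\omega}$ is in the kernel of the action and that the output on the generator $X_{\beta\omega}$ is $[X_{\beta\omega},0]$, consistent with tangentiality. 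Since a derivation of a free Lie algebra that is tangential on a set of free generators is tangential, and since the images $s(X_{\alpha i}), s(X_{ij})$ generate $s(\mathfrak{p}_{m-1})$ as a Lie algebra (derivations that are tangential are not closed under bracket in general, but the action of $s(\mathfrak{p}_{m-1})$ is by a Lie algebra of derivations each of which is, by the generator computation, tangential — and it suffices that the generators of $s(\mathfrak{p}_{m-1})$ act tangentially together with the fact that brackets of tangential derivations are tangential, which is elementary), the claim follows.

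The main obstacle is the bookkeeping in the "shared index" case: one has to be careful that after using $\sum_{v}X_{iv}=0$ to rewrite $s(X_{\alpha i})$ in terms of the generators that survive in $\mathfrak{p}_m$ versus those that are set to zero under $\pi_\beta$, the residual bracket with $X_{j\beta}$ genuinely lands in $[X_{j\beta},\mathfrak{f}_{m-2}]$ and not merely in $\mathfrak{p}_m$. Concretely, the subtlety is that $s(X_{\alpha i})$ is \emph{not} simply $X_{\alpha i}$ for $i=n$ (recall $X_{\alpha n}$ is replaced, and $X_{\beta\omega}=\sum_i X_{\alpha i}+\sum_{i<j}X_{ij}$), so one should either work with the generators $X_{\alpha i}$ $(1\le i\le n-1)$ and $X_{ij}$ directly — for which $s$ is the identity inclusion — or track the $X_{\beta\omega}$ correction term, which is harmless since it commutes with everything in sight. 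I expect that once the generator-level brackets are tabulated the tangentiality is immediate; the only real content is the four-term relation doing its job in the shared-index case, and verifying that brackets of tangential derivations on a free Lie algebra are again tangential (a short direct check: if $D_1 x_i=[x_i,v_i]$ and $D_2 x_i=[x_i,w_i]$ then $[D_1,D_2]x_i = [x_i, D_1 w_i - D_2 v_i - [v_i,w_i]]$).
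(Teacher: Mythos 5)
Your proposal follows the same route as the paper: realize the action as the adjoint action of $s(\mathfrak{p}_{m-1})$ on the ideal $\mathfrak{f}_{m-2}$, compute it on generators, kill the disjoint-index brackets, handle the shared-index case with the four-term relation, and get the second claim from $[X_{\alpha i},X_{\beta\omega}]=[X_{ij},X_{\beta\omega}]=0$. One step is wrong as literally written, though: $[X_{\alpha i}+X_{\beta i},X_{i\beta}]=0$ is not an instance of $[X_{ac}+X_{bc},X_{ab}]=0$, and since $[X_{\beta i},X_{i\beta}]=0$ trivially it would force $[X_{\alpha i},X_{i\beta}]=0$, which is false (e.g.\ $\mathfrak{p}_4\cong\mathfrak{f}_2$ is free on $X_{1\beta}$ and $X_{\beta\omega}=X_{\alpha 1}$). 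The instance you want is $[X_{i\alpha}+X_{\beta\alpha},X_{i\beta}]=0$ (take $a=i$, $b=\beta$, $c=\alpha$), which gives $[X_{\alpha i},X_{i\beta}]=-[X_{\alpha\beta},X_{i\beta}]=[X_{i\beta},X_{\alpha\beta}]$, and the linear relation $\sum_{v\neq\beta}X_{v\beta}=0$ then writes $X_{\alpha\beta}=-\sum_{k=1}^{n}X_{k\beta}-X_{\omega\beta}$ as an element of $\mathfrak{f}_{m-2}$ --- this is exactly what your parenthetical ``close up the triangle'' alternative describes, and it is what the paper does (likewise $[X_{ij},X_{i\beta}]=-[X_{j\beta},X_{i\beta}]$). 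With that correction the argument coincides with the paper's proof; your extra remarks that it suffices to check Lie algebra generators because tangential derivations are closed under bracket, and that $s(X_{\alpha n})$ is not among the generators of $s(\mathfrak{p}_{m-1})$, are correct and left implicit in the paper.
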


\begin{proof}
Recall that the section $s$ is determined by $s(X_{\alpha i})=X_{\alpha i}$, $s(X_{ij})=X_{ij}$, for $1\le i\ne j\le n$. 
The Lie subalgebra $s(\mathfrak{p}_{m-1}) \subset \mathfrak{p}_m$ acts on $\mathfrak{f}(X_{1\beta},\ldots,X_{n\beta}, X_{\beta\omega})$ by the adjoint action. We have the relations
$[X_{\alpha i},X_{\beta\omega}]=0$ and $ [X_{ij},X_{\beta\omega}]=0$, therefore $X_{\beta\omega}$ is in the kernel of this action.

For the other generators $X_{k\beta}$ with  $1\le k\le n$ the action is
\begin{align*}
X_{\alpha i}: X_{k\beta}\mapsto &\delta_{ik}[X_{\alpha k},X_{k\beta}]=-\delta_{ik}[X_{\alpha\beta},X_{k\beta}],\\
X_{ij}: X_{k\beta}\mapsto &\delta_{ik}[X_{kj},X_{k\beta}]+\delta_{jk}[X_{ik},X_{k\beta}]=-[\delta_{ik} X_{j\beta}+\delta_{jk}X_{i\beta},X_{k\beta}],
\end{align*}
which implies that the action is by tangential derivations.

\end{proof}

For future use, we consider in more detail the duality between Arnold 1-forms and generating sets of the Lie algebras $\mathfrak{p}_m, \mathfrak{f}_{m-2}$ and $s(\mathfrak{p}_{m-1})$. For $\mathfrak{p}_m$, the canonical element $\Omega$ takes the form
\begin{align*}
\Omega ={}&
\sum_{i=1}^n\left( X_{\alpha i} \frac{dz_i}{z_i} 
+ X_{i \beta} \frac{dz_i}{z_i -1 }\right) + \sum_{i<j} X_{ij} \frac{d(z_i-z_j)}{z_i-z_j} \\
 ={}&
\sum_{i=1}^n\left( X_{\alpha i} \sum_{k=i}^n \frac{dx_k}{x_k}
+ X_{i \beta} \frac{dz_i}{z_i -1 }\right) + \sum_{i<j} X_{ij} \left(\sum_{k=j}^n \frac{dx_k}{x_k} + \frac{du_{ij}}{u_{ij}-1}\right) \\
 ={}&
\sum_{i=1}^{n-1}  X_{\alpha i} \left( \sum_{k=i}^{n-1} \frac{dx_k}{x_k} \right)
 + \sum_{i<j} X_{ij} \left(\sum_{k=j}^{n-1} \frac{dx_k}{x_k} + \frac{du_{ij}}{u_{ij}-1}\right) \\
& +  \sum_{i=1}^n X_{i \beta} \frac{dz_i}{z_i -1 } + X_{\beta \omega} \frac{dx_n}{x_n}.  
\end{align*}
Here, we have used the notation \eqref{eq: xyz}: $u_{ij}=x_i \dots x_{j-1}, z_i =x_i \dots x_n$. 
As an application of Proposition \ref{prop:omegas}, and more generally Section \ref{sec:quadratic-lie}, consider the injective Lie homomorphism $\mathfrak{f}_{m-2} \to \mathfrak{p}_m$. In this context, 
\begin{equation} \label{eq:V1-pm}
\begin{split}
    V^{*}_1 & = \mathbb{C}\langle X_{\alpha i}, X_{\beta i}, i=1, \ldots, n; X_{ij}, 1 \leq i < j \leq n\rangle, \\
    V_1 & = \mathbb{C}\langle \omega_{\alpha i}, \omega_{\beta i}, i=1, \ldots, n; \omega_{ij}, 1 \leq i < j \leq n\rangle
\end{split}
\end{equation}
and
\begin{equation*}
\begin{split}
    V^{*}_2 & = \mathbb{C}\langle X_{\beta i}, i=1, \ldots, n; X_{\beta \omega}\rangle, \\
    V_2 & = \mathbb{C}\langle \omega_{\beta i}, i=1, \ldots, n; \omega_{\beta \omega}\rangle.
\end{split}
\end{equation*}
We now compute the LHS and RHS of \eqref{eq:omegas}: the LHS yields
\begin{equation*}
    (f^{*} \otimes 1) \left(\sum_{i=1}^{n} X_{i \beta} \omega_{i \beta} + X_{\beta \omega} \omega_{\beta \omega}\right) = \sum_{i=1}^{n} X_{i \beta} \omega_{i \beta} + X_{\beta \omega} \omega_{\beta \omega}
\end{equation*}
and the RHS,
\begin{equation*}
\begin{split}
    (1 \otimes f)(\Omega) = \sum_{i=1}^{n-1}  X_{\alpha i} \left( \sum_{k=i}^{n-1} f \left(\frac{dx_k}{x_k} \right) \right)
& + \sum_{i<j} X_{ij} \left(\sum_{k=j}^{n-1} f \left(\frac{dx_k}{x_k} \right) + f \left(\frac{du_{ij}}{u_{ij}-1} \right) \right) \\
& +  \sum_{i=1}^n X_{i \beta} f \left(\frac{dz_i}{z_i -1} \right) + X_{\beta \omega} f\left(\frac{dx_n}{x_n}\right). 
 \end{split}
\end{equation*}
Comparing them, we thus deduce the following projection map $p_1$ on Arnold 1-forms,
\begin{equation*}      \label{eq: p1}
p_1: \begin{cases}
  \frac{dx_i}{x_i} \mapsto 0  &  {\rm for} \hskip 0.3cm i=1, \dots, n-1, \\
  \frac{du_{ij}}{u_{ij}-1} \mapsto 0 & {\rm for} \hskip 0.3cm 1 \leq i <j \leq n.
\end{cases}
\end{equation*}
Similarly, consider the injective Lie homomorphism $s: \mathfrak{p}_{m-1} \to \mathfrak{p}_m$. Now, $V_1$ and $V^{*}_1$ are as in \eqref{eq:V1-pm} and
\begin{equation*}
\begin{split}
    V^{*}_2 & = \mathbb{C}\langle X_{\alpha i}, i=1, \ldots, n-1; X_{ij}, 1 \leq i < j \leq n\rangle, \\
    V_2 & = \mathbb{C}\langle \omega_{\alpha i}, i=1, \ldots, n-1; \omega_{ij}, 1 \leq i < j \leq n\rangle.
\end{split}
\end{equation*}
Computing the LHS and RHS of \eqref{eq:omegas}, we get respectively,
\begin{equation*}
    (f^{*} \otimes 1) \left( \sum_{i=1}^{n-1} X_{\alpha i} \omega_{\alpha i} + \sum_{i<j} X_{ij} w_{ij} \right)= \sum_{i=1}^{n-1} X_{\alpha i} \omega_{\alpha i} + \sum_{i<j} X_{ij} w_{ij}
\end{equation*}
and 
\begin{equation*}
\begin{split}
    (1 \otimes f)(\Omega) = \sum_{i=1}^n X_{\alpha i} f \left(\frac{dz_i}{z_i} \right) 
+ \sum_{i=1}^n X_{i \beta} f \left(\frac{dz_i}{z_i -1 } \right) + \sum_{i<j} X_{ij} f \left(\frac{d(z_i-z_j)}{z_i-z_j} \right).
\end{split}
\end{equation*}
Comparing these two expressions gives rise to the projection map $p_2$
\begin{equation*}      \label{eq:p2}
p_2: 
\begin{cases}
    \frac{dz_i}{z_i -1 } \mapsto 0 & {\rm for} \hskip 0.3cm i=1, \dots, n, \\
    \frac{dx_n}{x_n} \mapsto 0. &
\end{cases}
\end{equation*}

\subsection{Left ideals in $U\mathfrak{p}_m$}

In this Section, we define a certain left ideal in $U\mathfrak{p}_m$. As a preparation, we consider the left ideal $I_m \subset U\mathfrak{f}_{m-2}$:
$$
I_m = (U\mathfrak{f}_{m-2})\, X_{\beta\omega} + \sum_{\substack{i<j \\ i,j\in \mathcal{S}\setminus \{\beta\}}} (X_{i\beta}) (X_{j\beta}),
%\hskip 0.3cm
%I_\leq = (U\mathfrak{f}_{n+1})\, X_{\beta\omega} + \sum_{i\leq j} %(X_{i\beta}) (X_{j\beta}).
$$
%where 
%$\mathfrak{f}_{n+1}=\mathfrak{f}(x_{1\beta}, \dots, x_{n\beta}, %x_{\beta\omega})$ is the free Lie algebra with $n+1$ generators. 
We also consider the subspace 
$$
C_m= \mathbb{C}\langle w_{a,\kappa} \mid N\geq 1,\,  a \in \mathbb{Z}_{\geq 1}^N,\; \kappa\text{ non-decreasing}\rangle \subset U\mathfrak{f}_{m-2},
$$
where
\begin{multline*}
w_{a,\kappa} = X_{\beta \omega}^{a_N-1} X_{\kappa(N) \beta} \dots X_{\beta \omega}^{a_1-1} X_{\kappa(1) \beta}\\
\text{with } a=(a_1, \dots, a_N) \in \mathbb{Z}_{\geq 1}^N,\ \kappa\colon\{1,\dots, N\}\to\{1,\dots, n\}.
\end{multline*}

\begin{Prop}
\label{prop:C-complement-I}
   The subspace $C_m$ is a complement of $I_m$ in $U\mathfrak{f}_{m-2}$. That is,
    \begin{equation}      \label{eq:I}
U\mathfrak{f}_{m-2} =  C_m \oplus I_m.
%U\mathfrak{f}_{n+1} =  C_\leq \oplus I_<.
\end{equation}
\end{Prop}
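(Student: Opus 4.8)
The plan is to exhibit an explicit vector-space decomposition by first understanding $U\mathfrak{f}_{m-2}$ via a monomial basis adapted to the generators $X_{1\beta},\dots,X_{n\beta},X_{\beta\omega}$. Since $\mathfrak{f}_{m-2}$ is the free Lie algebra on the $n$ generators $X_{i\beta}$ and $X_{\beta\omega}$ modulo the single linear relation $\sum_i X_{i\beta}+X_{\beta\omega}=0$, its universal enveloping algebra is a free associative algebra on $n$ of these $n+1$ generators; concretely $U\mathfrak{f}_{m-2}$ has a basis consisting of all words in the letters $X_{1\beta},\dots,X_{n\beta},X_{\beta\omega}$ in which, say, $X_{n\beta}$ never appears (eliminating $X_{n\beta}$ via the relation). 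I would set $Y_0:=X_{\beta\omega}$ and $Y_i:=X_{i\beta}$ for $1\le i\le n-1$, so that $U\mathfrak{f}_{m-2}=\mathbb{C}\langle Y_0,Y_1,\dots,Y_{n-1}\rangle$ is genuinely free on these letters.

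Next I would describe $C_m$ and $I_m$ in these terms. A monomial $w_{a,\kappa}=X_{\beta\omega}^{a_N-1}X_{\kappa(N)\beta}\cdots X_{\beta\omega}^{a_1-1}X_{\kappa(1)\beta}$ is, after the substitution, a word ending in some $X_{\kappa(1)\beta}$, i.e.\ ending in a letter that is \emph{not} $Y_0=X_{\beta\omega}$ (here one must be slightly careful: if $\kappa(1)=n$ the last letter $X_{n\beta}=-\sum_{i<n}X_{i\beta}-X_{\beta\omega}$ rewrites as a combination of the free letters, but the span is unaffected, so I'll just restrict $\kappa$ to land in $\{1,\dots,n-1\}$ when choosing the basis). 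More importantly, the condition that the exponents $a_i$ are \emph{arbitrary} positive integers while the non-$X_{\beta\omega}$ letters appear \emph{consecutively with no repetition constraint} means that $C_m$ is exactly the span of all words $w$ such that: (a) $w$ does not begin with $Y_0$ (because the top factor is $X_{\beta\omega}^{a_N-1}X_{\kappa(N)\beta}$, and all runs of $X_{\beta\omega}$ are "attached on the left" to a non-$X_{\beta\omega}$ letter... ), and (b) between any two consecutive non-$X_{\beta\omega}$ letters, and to the right of the last one, there is no $X_{\beta\omega}$ — wait, this is precisely where I must be careful, so let me phrase it as: $C_m=\mathbb{C}\langle$ words of the form (block)(block)$\cdots$(block)$\rangle$ where each block is $X_{\beta\omega}^{\ge 0}\,X_{j\beta}$ with $j\le n$. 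Equivalently, $C_m$ is the span of words that do \emph{not} end in $X_{\beta\omega}$ and in which no $X_{\beta\omega}$ occurs immediately... no: the cleanest characterization is that a word lies in the spanning set of $C_m$ iff it ends in a letter $\ne X_{\beta\omega}$ and contains \emph{at most one maximal $X_{\beta\omega}$-run between consecutive non-$X_{\beta\omega}$ letters}, which for arbitrary exponents is automatic — so in fact $C_m$ is simply $\mathbb{C}\langle$ all words ending in some $X_{j\beta}$, $j\le n$, read as words in $Y_0,\dots,Y_{n-1}\rangle$, because any such word decomposes \emph{uniquely} into blocks $Y_0^{e}Y_j$. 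This last observation — that every word not ending in $Y_0$ has a unique "block decomposition" — is the combinatorial heart of the argument.

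With that in hand, the decomposition is almost immediate. On one side, $I_m=U\mathfrak{f}_{m-2}\,X_{\beta\omega}+\sum_{i<j}(X_{i\beta})(X_{j\beta})$: the first summand is spanned by words \emph{ending} in $X_{\beta\omega}$, and the second summand $\sum_{i<j}(X_{i\beta})(X_{j\beta})$ is the span of words containing a (not necessarily adjacent) occurrence of $X_{i\beta}$ to the left of an $X_{j\beta}$ with $i<j$, i.e.\ words in which the non-$X_{\beta\omega}$ letters $X_{j_1\beta},\dots,X_{j_r\beta}$ (read left to right) are \emph{not} weakly decreasing in their indices — equivalently, words whose block decomposition $Y_0^{e_r}Y_{j_r}\cdots Y_0^{e_1}Y_{j_1}$ (or with a trailing $Y_0$-run) fails the monotonicity $j_1\le j_2\le\cdots$; note the monotonicity pattern matches exactly the non-decreasing $\kappa$ in the definition of $w_{a,\kappa}$. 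So $I_m$ is spanned by: all words ending in $X_{\beta\omega}$, together with all words whose $X_{\cdot\beta}$-letters (left to right) are not weakly $\kappa$-monotone. The complementary span is precisely words ending in an $X_{j\beta}$ with $\kappa$-monotone letters — which is $C_m$. The only real work is (i) passing rigorously between the two-sided ideal description and the "contains a letter-pattern" span description, which requires showing $(X_{i\beta})(X_{j\beta})$ is spanned by words with the stated subword pattern — this uses that after eliminating $X_{n\beta}$ the ideals are still monomial, and here the index $j$ can be $n$, so I'll keep all $n$ letters $X_{1\beta},\dots,X_{n\beta}$ and only eliminate at the very end; and (ii) checking the monotonicity bookkeeping lines up with "non-decreasing $\kappa$" rather than non-increasing, which is just reading the definition of $w_{a,\kappa}$ in the stated right-to-left order. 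I would organize it as: prove $U\mathfrak{f}_{m-2}$ is free on $\{X_{i\beta}\}_{i=1}^n\cup\{X_{\beta\omega}\}$ modulo the linear relation, fix a monomial basis, show $I_m$ is the span of the "bad" monomials and $C_m$ the span of the "good" ones, and conclude.

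The main obstacle I anticipate is the honest verification that the two-sided ideal $\sum_{i<j}(X_{i\beta})(X_{j\beta})$ equals the span of monomials exhibiting the pattern "$X_{i\beta}$ somewhere left of $X_{j\beta}$ for some $i<j$" — the containment $\subseteq$ is clear, but the reverse requires a small induction showing every such monomial can be written as (word)$\cdot X_{i\beta}\cdot$(word)$\cdot X_{j\beta}\cdot$(word) $\in (X_{i\beta})(X_{j\beta})$, interacting delicately with the linear relation that expresses $X_{n\beta}$ in terms of the others — hence my preference to defer that elimination. Once the combinatorial dictionary (words $\leftrightarrow$ blocks, bad $\leftrightarrow$ $I_m$, good $\leftrightarrow$ $C_m$) is set up cleanly, the direct-sum statement $U\mathfrak{f}_{m-2}=C_m\oplus I_m$ follows because every monomial is either good or bad but not both.
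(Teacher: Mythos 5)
There is a genuine gap, and it sits right at the foundation of your argument: you have misread the presentation of $\mathfrak{f}_{m-2}$. The free Lie algebra $\mathfrak{f}_{m-2}$ is generated by the $n+2$ elements $X_{\beta v}$, $v\in\mathcal{S}\setminus\{\beta\}=\{\alpha,1,\dots,n,\omega\}$, subject to the single relation $X_{\alpha\beta}+\sum_{i=1}^n X_{i\beta}+X_{\beta\omega}=0$; the paper eliminates $X_{\alpha\beta}$, so that the chosen generators $X_{1\beta},\dots,X_{n\beta},X_{\beta\omega}$ are already a \emph{free} generating set and $U\mathfrak{f}_{m-2}$ is the free associative algebra on these $n+1$ letters. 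You instead assert the relation $\sum_i X_{i\beta}+X_{\beta\omega}=0$ among these $n+1$ generators and propose to eliminate $X_{n\beta}$, working in a free algebra on only $n$ letters with $\kappa$ restricted to $\{1,\dots,n-1\}$. This is not a harmless normalization: it changes both $C_m$ and $I_m$, and under your spurious relation the proposition is actually false (for $n=1$ your relation gives $X_{1\beta}=-X_{\beta\omega}$, so every $w_{a,\kappa}$ lies in $U\mathfrak{f}_2\,X_{\beta\omega}\subset I_4$ and $C_4\cap I_4\neq 0$). All of the "delicate interaction with the linear relation" you anticipate, and the resulting hedging about rewriting $X_{n\beta}$, stems from this false premise.

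Once the presentation is corrected, your combinatorial dictionary is exactly the paper's argument: in the free associative algebra on $X_{1\beta},\dots,X_{n\beta},X_{\beta\omega}$, the ideal $I_m$ is the span of the monomials that either end in $X_{\beta\omega}$ or contain some $X_{i\beta}$ to the left of some $X_{j\beta}$ with $i<j$, the $w_{a,\kappa}$ (equivalently, your block decompositions $X_{\beta\omega}^{e}X_{j\beta}$ with weakly decreasing indices read left to right) are precisely the remaining monomials, and the direct sum follows because monomials form a basis. The step you flag as the "main obstacle" — that $(X_{i\beta})(X_{j\beta})$ is the span of monomials exhibiting the pattern — is immediate in a genuinely free algebra, since the two-sided ideal generated by a letter is the span of all words containing that letter and products of such spans are again monomial spans; it only looked delicate because of the nonexistent relation.
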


\begin{proof}
    $U\mathfrak{f}_{m-2}$ is a free associative algebra with generators $X_{\beta \omega}, X_{i\beta}$ for $i=1, \dots, n$. The left ideal $I_m$ is spanned by the words which end on $X_{\beta \omega}$, or which have a letter $X_{i\beta}$ to the left of the letter $X_{j\beta}$ with $i<j$. The space $C_m$ is spanned by all the words which do not meet any of these criteria. Since there are no relations between words in a free associative algebra, $C_m$ is a complement of $I_m$, as required. 
\end{proof}

Next, we define the subspace 
$$
J_m = (U\mathfrak{p}_m) s(\mathfrak{p}_{m-1}) \oplus I_m \subset U\mathfrak{p}_m.
$$

\begin{Prop}      \label{prop: complements}
    $J_m$ is a left ideal, and $C_m$ is its complement  in $U\mathfrak{p}_m$,
    $$
U\mathfrak{p}_m = C_m \oplus J_m.
    $$
\end{Prop}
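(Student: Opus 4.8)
The statement to prove is that $J_m = (U\mathfrak{p}_m)s(\mathfrak{p}_{m-1}) \oplus I_m$ is a left ideal in $U\mathfrak{p}_m$ and that $C_m$ is its vector-space complement. The natural starting point is the PBW-type factorization coming from the short exact sequence $0 \to \mathfrak{f}_{m-2} \to \mathfrak{p}_m \xrightarrow{\pi_\beta} \mathfrak{p}_{m-1} \to 0$ together with the splitting $s$. Concretely, I would first record the linear isomorphism
$$
U\mathfrak{p}_m \;\cong\; U\mathfrak{f}_{m-2} \otimes U s(\mathfrak{p}_{m-1}),
$$
obtained by choosing PBW bases adapted to the decomposition $\mathfrak{p}_m = \mathfrak{f}_{m-2} \oplus s(\mathfrak{p}_{m-1})$ (generators of $\mathfrak{f}_{m-2}$ to the left, generators of $s(\mathfrak{p}_{m-1})$ to the right). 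Under this isomorphism, $(U\mathfrak{p}_m)s(\mathfrak{p}_{m-1})$ corresponds to $U\mathfrak{f}_{m-2} \otimes \big(Us(\mathfrak{p}_{m-1})\big)_{+}$, the part where the right factor lies in the augmentation ideal, so that
$$
U\mathfrak{p}_m = U\mathfrak{f}_{m-2} \cdot 1 \;\oplus\; (U\mathfrak{p}_m)s(\mathfrak{p}_{m-1}),
$$
as vector spaces — this is exactly the decomposition used in Theorem \ref{th: intro2}. Given this, and given that $U\mathfrak{f}_{m-2} = C_m \oplus I_m$ by Proposition \ref{prop:C-complement-I}, the direct-sum statement $U\mathfrak{p}_m = C_m \oplus J_m$ is immediate: $J_m = I_m \oplus (U\mathfrak{p}_m)s(\mathfrak{p}_{m-1})$ complements the first summand $C_m$ inside $U\mathfrak{f}_{m-2}\cdot 1 \oplus (U\mathfrak{p}_m)s(\mathfrak{p}_{m-1}) = U\mathfrak{p}_m$.

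The substantive part is that $J_m$ is a \emph{left} ideal, i.e.\ $X_{ab}\cdot J_m \subseteq J_m$ for every generator $X_{ab}$ of $\mathfrak{p}_m$. The summand $(U\mathfrak{p}_m)s(\mathfrak{p}_{m-1})$ is visibly left-stable, so everything reduces to showing $X_{ab}\cdot I_m \subseteq J_m$, where $I_m = (U\mathfrak{f}_{m-2})X_{\beta\omega} + \sum_{i<j}(X_{i\beta})(X_{j\beta})$ is only a priori a left ideal in the \emph{subalgebra} $U\mathfrak{f}_{m-2}$. I would split the generators into three types: (a) $X_{\beta\omega}$ itself; (b) the generators $X_{i\beta}$ spanning the rest of $\mathfrak{f}_{m-2}$; and (c) the generators $X_{\alpha i}, X_{ij}$ spanning $s(\mathfrak{p}_{m-1})$. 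For type (a) and (b), left multiplication keeps us inside $U\mathfrak{f}_{m-2}$, and I must check $X_{\beta\omega}\cdot I_m \subseteq I_m$ and $X_{i\beta}\cdot I_m \subseteq I_m$; the first uses that $X_{\beta\omega}$ is central in $U\mathfrak{f}_{m-2}$ (it commutes with all $X_{j\beta}$, by the remark after \eqref{eq:relations p}, since $[X_{\beta\omega},X_{\alpha i}]=[X_{\beta\omega},X_{ij}]=0$ forces $X_{\beta\omega}=-\sum_i X_{i\beta}$ inside $\mathfrak{f}_{m-2}$ — wait, more carefully, $X_{\beta\omega}$ is one of the chosen generators and one checks directly it is central), and the second is the explicit combinatorial description of $I_m$ from the proof of Proposition \ref{prop:C-complement-I}: multiplying a word of $I_m$ on the left by $X_{i\beta}$ still yields a word ending in $X_{\beta\omega}$ or having an $X_{i\beta}$ left of an $X_{j\beta}$ with $i<j$.

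The genuinely delicate case is type (c): for $D \in \{X_{\alpha i}, X_{ij}\}$ we have $D = s(\bar D)$ for a generator $\bar D$ of $\mathfrak{p}_{m-1}$, and $D\cdot w$ for $w \in I_m \subset U\mathfrak{f}_{m-2}$ must be rewritten using $D\,w = [D,w] + w\,D$. The term $w\,D$ lies in $(U\mathfrak{f}_{m-2})s(\mathfrak{p}_{m-1}) \subseteq (U\mathfrak{p}_m)s(\mathfrak{p}_{m-1})$; the term $[D,w] = \ad_D(w)$ stays inside $U\mathfrak{f}_{m-2}$ because, by Proposition \ref{prop: tangential action}, $\ad_D$ acts on $\mathfrak{f}_{m-2}$ by a tangential derivation, and hence on $U\mathfrak{f}_{m-2}$ by a derivation preserving $U\mathfrak{f}_{m-2}$. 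So the crux is: \textbf{the tangential derivation $\ad_D$ preserves the left ideal $I_m \subseteq U\mathfrak{f}_{m-2}$.} This I would verify using the explicit formulas from Proposition \ref{prop: tangential action}: $\ad_{X_{\alpha i}}(X_{k\beta}) = -\delta_{ik}[X_{\alpha\beta},X_{k\beta}]$ and $\ad_{X_{ij}}(X_{k\beta}) = -[\delta_{ik}X_{j\beta}+\delta_{jk}X_{i\beta},\,X_{k\beta}]$, together with $\ad_D(X_{\beta\omega})=0$. One checks $\ad_D$ sends the generating word $X_{i\beta}\cdots$ of the second piece $\sum_{i<j}(X_{i\beta})(X_{j\beta})$ back into that piece (the derivation replaces one letter $X_{k\beta}$ by a bracket, which when expanded still contains an $X_{i'\beta}$ to the left of an $X_{j'\beta}$ with $i'<j'$, because the tangential formula only involves indices $\le j$ acting near position $k$ — this needs care about index bookkeeping), and sends $(U\mathfrak{f}_{m-2})X_{\beta\omega}$ into $I_m$ since $\ad_D(X_{\beta\omega})=0$ and $\ad_D(U\mathfrak{f}_{m-2}) \subseteq U\mathfrak{f}_{m-2}$. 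This index-chasing verification — showing the tangential action of $s(\mathfrak{p}_{m-1})$ does not ``reorder'' the $X_{i\beta}$'s out of the forbidden pattern defining $I_m$ — is the main obstacle, and the place where one must be most careful; everything else is formal bookkeeping with the PBW decomposition.
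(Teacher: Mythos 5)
Your argument is essentially the paper's: the direct-sum part is the same PBW computation combined with Proposition \ref{prop:C-complement-I}, and the left-ideal part rests on the same two ingredients (that $X_{\beta\omega}$ commutes with $s(\mathfrak{p}_{m-1})$, and the tangentiality of the $s(\mathfrak{p}_{m-1})$-action from Proposition \ref{prop: tangential action}); the paper merely packages your generator-by-generator check as the equality $J_m=J'_m$, where $J'_m$ is a manifestly left ideal of $U\mathfrak{p}_m$.

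One correction: $X_{\beta\omega}$ is \emph{not} central in $U\mathfrak{f}_{m-2}$. The Lie algebra $\mathfrak{f}_{m-2}$ is free on $X_{1\beta},\dots,X_{n\beta},X_{\beta\omega}$, so $[X_{\beta\omega},X_{i\beta}]\neq 0$; what is true (and what the paper actually uses) is that $X_{\beta\omega}$ is annihilated by the adjoint action of $s(\mathfrak{p}_{m-1})$. Fortunately your case (a) does not need centrality: $X_{\beta\omega}\cdot I_m\subseteq I_m$ holds simply because $I_m$ is by construction a left ideal of $U\mathfrak{f}_{m-2}$ (a left ideal plus a sum of products of two-sided ideals), and the same observation disposes of case (b) without any word-pattern check. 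Likewise, the ``index bookkeeping'' you flag as the main obstacle in case (c) is unnecessary: since $\ad_D(X_{k\beta})=[X_{k\beta},v_k]$ lies in the two-sided ideal $(X_{k\beta})_{U\mathfrak{f}_{m-2}}$, the derivation $\ad_D$ preserves each $(X_{k\beta})$ and hence, by the Leibniz rule, the product $(X_{i\beta})(X_{j\beta})$ --- no reordering of letters needs to be tracked.
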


\begin{proof}
    By the PBW theorem, we have
    $$
U\mathfrak{p}_m = (U\mathfrak{f}_{m-2}) (Us(\mathfrak{p}_{m-1})) =
 U\mathfrak{f}_{m-2} \oplus U\mathfrak{p}_{m} s(\mathfrak{p}_{m-1}).
    $$
    Using equation \eqref{eq:I}, we obtain
$$
U\mathfrak{p}_m  =  (C_m \oplus I_m) \oplus U\mathfrak{p}_{m} s(\mathfrak{p}_{m-1}) = C_m \oplus J_m,     
$$
as required. 

To show that the subspace $J_m$ is a left ideal in $U\mathfrak{p}_m$, we define the following left ideal:
$$
J'_m  =  (U\mathfrak{p}_m) s(\mathfrak{p}_{m-1}) + U\mathfrak{p}_m X_{\beta \omega}
+ \sum_{i < j} (X_{i\beta})_{U\mathfrak{p}_m} (X_{j\beta})_{U\mathfrak{p}_m}.
$$
Obviously, $J_m \subset J'_m$  since we have replaced left and two-sided ideals in $U\mathfrak{f}_{m-2}$ by the corresponding left and two-sided ideals in  $U\mathfrak{p}_m$. To prove inclusion in the opposite direction, observe that
$$
U\mathfrak{p}_m X_{\beta \omega} =(U\mathfrak{f}_{m-2}) (Us(\mathfrak{p}_{m-1})) X_{\beta \omega} \subset U\mathfrak{f}_{m-2} X_{\beta \omega} + (U\mathfrak{p}_m) s(\mathfrak{p}_{m-1}). 
$$
Here we have used the fact that $X_{\beta\omega}$ is annihilated by the adjoint action of $s(\mathfrak{p}_{m-1})$. Next, observe that by Proposition \ref{prop: tangential action}, the action of $s(\mathfrak{p}_{m-1})$ on $\mathfrak{f}_{m-2}$ is tangential. Hence, we have
$$
(X_{i\beta})_{U\mathfrak{p}_m} \subset (X_{i\beta})_{U\mathfrak{f}_{m-2}}(Us(\mathfrak{p}_{m-1})) \subset (X_{i\beta})_{U\mathfrak{f}_{m-2}}  + 
(U\mathfrak{p}_m) s(\mathfrak{p}_{m-1}).
$$
The two arguments above show that $J'_m \subset J_m$, and this concludes the proof.
\end{proof}

\section{Polylogarithms}      \label{sec: polylogs}

In this Section, we define formal polylogarithms as elements of $(U\mathfrak{p}_m)^*$, and show that their joint kernel is exactly the left ideal $J_m$.

\subsection{Definition of polylogarithms}
Recall that configurations in $\mathcal{M}_{0,m}$ with $m=n+3,$ are labeled by $\mathcal{S}$, as in \eqref{eq:configurations-labels}. We consider the change of variables on $\mathcal{M}_{0,m}$:
$$
(z_1, \dots, z_n) \mapsto \left(x_1 = \frac{z_1}{z_2}, \dots, x_i=\frac{z_i}{z_{i+1}}, \dots, x_n = z_n\right), \hskip 0.3cm z_i=x_i \dots x_n.
$$
As in the Introduction, for any
$$a = (a_1, \dots, a_n) \in \Z_{\geq1}^n $$
we shall consider the polylogarithmic function (or a polylog)
$$L_a(x_1,\dots,x_n) = \sum_{0< k_1 < \dots <k_n} \, \frac{{x_1}^{k_1}\dots {x_n}^{k_n}}{k_1^{a_1} \dots k_n^{a_n}}$$
and more generally, if $\kappa\colon\{1,\dots, N\} \to \{1,\dots, n\}$ is a non-decreasing function and if $a\in\Z_{\geq1}^N$, the polylogarithmic function
$$L_{a,\kappa}(x_1,\dots,x_n) = L_a(y_{\kappa,1},\dots, y_{\kappa,N})$$
where
$$y_{\kappa,i} = z_{\kappa(i)} / z_{\kappa(i+1)} = \prod_{\kappa(i)\leq j <\kappa(i+1)} x_j \qquad(\text{with }\kappa(N+1) := \beta).$$
Note that we consider polylogs $L_{a, \kappa}(x)$ as functions of $(x_1, \dots, x_n)$. They are locally defined holomorphic functions on $\mathcal{M}_{0,m}$ (and extend to holomorphic functions on the universal cover of  $\mathcal{M}_{0,m}$). The open convergence domain of the corresponding power series is given by
\begin{equation}
\label{eq:domainD}
\mathcal{D} = \{ (x_1, \dots, x_n); |x_i| < 1 \,\, {\rm for} \,\, i=1, \dots, n\}.
\end{equation}
To $N=0$ we assign the polylogarithmic function $L_\emptyset(x_1,\dots,x_n)=1$.

\begin{Ex}
    For $n=1$, there is for each $N$ a unique map $\kappa$ given by the constant function with value $1$. In that case, polylogs are functions in one variable defined by the power series
    $$
L_{a,1}(x_1) = \sum_{0<k_1 < \dots < k_N} \frac{x_1^{k_N}}{k_1^{a_1} \dots k_N^{a_N}}.
    $$
    For $n=2$ and $\kappa$ given by $\kappa(j)=1$, $\kappa(j+1)=2$ for some $1\leq j\leq N$,
they take the form
    $$
L_{a, \kappa}(x_1, x_2)=\sum_{0<k_1 < \dots < k_N} \frac{x_1^{k_j}x_2^{k_N}}{k_1^{a_1} \dots k_N^{a_N}}
    $$
    For $\kappa(N) = 1$, we obtain polylog functions in one variable $L_{a,1}(x_1x_2)$, and for $\kappa(1) = 2$ we obtain  $L_{a,1}(x_2)$.
\end{Ex}

\begin{Prop}      \label{prop: dL}
De Rham differentials of polylogs $L_{a,\kappa}(x_1,\dots, x_n)$ are given by
\begin{align*}
d L_{a,\kappa}  &= \sum_{i=1, a_i\neq 1}^N L_{a_{(i)}^-, \kappa}\,\frac{dy_{\kappa,i}}{y_{\kappa,i}}\\
                &- \sum_{i=1, a_i = 1}^N L_{a_{(i)}^\wedge, \kappa\circ\delta_i}\,\frac{dy_{\kappa,i}}{y_{\kappa,i} - 1}\\
                &+ \sum_{i=1, a_i = 1}^{N-1} L_{a_{(i)}^\wedge, \kappa\circ\delta_{i+1}}\,\Bigl(\frac{dy_{\kappa,i}}{y_{\kappa,i} - 1} - \frac{dy_{\kappa,i}}{y_{\kappa,i}}\Bigr),
\end{align*}
where
\begin{align*}
a_{(i)}^- &= (a_1, \dots, a_i-1,\dots, a_N) \in \Z_{\geq 1}^{N}\\
a_{(i)}^\wedge &= (a_1, \dots, \widehat{a_i},\dots, a_N) \in \Z_{\geq 1}^{N-1}
\end{align*}
and the ``face maps'' $\delta_i\colon\{1,\dots, N-1\}\to\{1,\dots, N\}$
are given by
$$\delta_i(j) = 
\begin{cases}
    j & \text{if } j<i,\\
    j+1 & \text{otherwise}.
\end{cases}
$$
If $y_{\kappa, i}\equiv 1$, the 1-form $\frac{dy_{\kappa,i}}{y_{\kappa,i} - 1}$ is understood to be $0$.
\end{Prop}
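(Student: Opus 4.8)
\textbf{Proof plan for Proposition \ref{prop: dL}.}

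The plan is to compute the de Rham differential of $L_{a,\kappa}$ directly from the defining power series and then reorganize the resulting expression in terms of the $1$-forms $\tfrac{dy_{\kappa,i}}{y_{\kappa,i}}$ and $\tfrac{dy_{\kappa,i}}{y_{\kappa,i}-1}$. First I would reduce to the basic case: since $L_{a,\kappa}(x_1,\dots,x_n) = L_a(y_{\kappa,1},\dots,y_{\kappa,N})$ is a composition, by the chain rule it suffices to differentiate $L_a(y_1,\dots,y_N) = \sum_{0<k_1<\dots<k_N} \prod_j y_j^{k_j}/k_j^{a_j}$ with respect to the variables $y_1,\dots,y_N$ and then substitute $y_j = y_{\kappa,j}$, using that $dy_{\kappa,j}=0$ precisely when $y_{\kappa,j}\equiv 1$ (i.e.\ when $\kappa(j)=\kappa(j+1)$, in which case the empty product convention applies). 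So the core computation is $dL_a = \sum_{i=1}^N \bigl(\partial L_a/\partial y_i\bigr)\,dy_i$.

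Next I would compute $\partial L_a/\partial y_i$ term by term. Differentiating the summand in $y_i$ brings down a factor $k_i/y_i$, so
$$
y_i\,\frac{\partial L_a}{\partial y_i} = \sum_{0<k_1<\dots<k_N} \frac{k_i\, y_1^{k_1}\cdots y_N^{k_N}}{k_1^{a_1}\cdots k_N^{a_N}}.
$$
If $a_i \neq 1$, the factor $k_i$ simply lowers the exponent $a_i \mapsto a_i - 1$, giving exactly $L_{a_{(i)}^-}(y_1,\dots,y_N)$, which accounts for the first sum. If $a_i = 1$, then $k_i/k_i^{a_i} = 1$, so the variable $k_i$ is freed from the denominator but still constrained by $k_{i-1} < k_i < k_{i+1}$; the key step here is to sum the geometric-type series over $k_i$ in the range $(k_{i-1}, k_{i+1})$. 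This is where the forms with $y_{\kappa,i}-1$ in the denominator and the face maps $\delta_i, \delta_{i+1}$ enter: summing $\sum_{k_{i-1}<k_i<k_{i+1}} y_i^{k_i}$ gives a telescoping expression of the form $\frac{y_i^{k_{i-1}+1} - y_i^{k_{i+1}}}{1 - y_i} = \frac{y_i}{y_i - 1}\bigl(y_i^{k_{i+1}} - y_i^{k_{i-1}+1}\bigr)$ (with appropriate boundary handling when $i=1$, where the lower constraint is just $k_1\geq 1$, and when $i=N$, where there is no upper constraint — but convergence on $\mathcal{D}$ makes the $k_{i+1}\to\infty$ term vanish). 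Reassembling the two pieces $y_i^{k_{i+1}}$ and $y_i^{k_{i-1}+1}$ into polylog series: the term $y_i^{k_{i+1}}$ merges the $i$-th variable into the $(i+1)$-th (i.e.\ replaces the pair of constraints by $k_{i-1} < k_{i+1}$ with the weight index shifted), producing $L_{a_{(i)}^\wedge, \kappa\circ\delta_{i+1}}$, and similarly $y_i^{k_{i-1}+1}$ merges into the $(i-1)$-th variable giving $L_{a_{(i)}^\wedge,\kappa\circ\delta_i}$; careful bookkeeping of which $1$-form ($\tfrac{dy}{y-1}$ versus $\tfrac{dy}{y-1}-\tfrac{dy}{y}$) multiplies each, and of the range of summation ($i$ up to $N$ in one sum, up to $N-1$ in the other), yields the stated formula.

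The main obstacle I anticipate is the bookkeeping around the boundary indices $i=1$ and $i=N$ and the correct identification of the merged series with $L_{a_{(i)}^\wedge,\kappa\circ\delta_i}$ and $L_{a_{(i)}^\wedge,\kappa\circ\delta_{i+1}}$ — in particular verifying that the extra shift by $+1$ in the exponent $y_i^{k_{i-1}+1}$ versus $y_i^{k_{i+1}}$ correctly corresponds to deleting $a_i$ and composing $\kappa$ with $\delta_i$ resp.\ $\delta_{i+1}$, and checking that the $\tfrac{dy}{y}$ correction terms appearing in the third sum come precisely from rewriting $\tfrac{y_i}{y_i-1} = 1 + \tfrac{1}{y_i-1}$ and combining the ``$1$'' contribution with the chain-rule factor $dy_i/y_i$. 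I would handle this by first writing out the $n=1$ and $n=2$ cases from the Example to fix the pattern, then doing the general telescoping sum carefully, treating the convergence on $\mathcal{D}$ (which justifies differentiating term by term and dropping the $k_{i+1}\to\infty$ tail) as a routine matter. The identity $\tfrac{dy_{\kappa,i}}{y_{\kappa,i}-1}=0$ when $y_{\kappa,i}\equiv1$ will make the face-map terms consistent when $\kappa$ is not strictly increasing.
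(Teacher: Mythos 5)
Your proposal is correct and follows essentially the same route as the paper: direct term-by-term differentiation of the defining power series, with the geometric summation over $k_i$ in the range $(k_{i-1},k_{i+1})$ producing the face-map terms when $a_i=1$. In fact the paper's proof only works out the single representative case $a=(1,1)$, $\kappa=\id$ and declares the rest a direct computation, so your outline of the general bookkeeping (boundary indices, the partial-fraction split producing the $\tfrac{dy}{y-1}-\tfrac{dy}{y}$ correction, and the vanishing of the $k_{N+1}\to\infty$ tail on $\mathcal{D}$) is, if anything, more complete than what is printed.
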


\begin{proof}
The proof is by direct computation. Here we give a calculation for one of the interesting cases. Let $a=(1,1)$, $N=2$, $\kappa=\id$ (so, $y_{\kappa, i} = x_i$),
and
$$
L_{(1,1)}(x_1,x_2) = \sum_{0<k_1 < k_2} \frac{x_1^{k_1}x_2^{k_2}}{k_1k_2}.
$$
We compute,
\begin{align*}
d L_{(1,1)}(x_1,x_2) ={}&  \sum_{0<k_1 < k_2} \left( \frac{x_1^{k_1}x_2^{k_2}}{k_2}
\, \frac{dx_1}{x_1} +  \frac{x_1^{k_1}x_2^{k_2}}{k_1}
\, \frac{dx_2}{x_2}\right) \\
={}&   \sum_{k_2} \frac{x_1 - x_1^{k_2}}{1-x_1} \, \frac{x_2^{k_2}}{k_2}
\, \frac{dx_1}{x_1} + \sum_{k_1} \frac{x_2^{k_1}}{1-x_2} \, \frac{x_1^{k_1}}{k_1}
\, \frac{dx_2}{x_2} \\
={}&   L_{(1)}(x_2) \frac{dx_1}{1-x_1} - L_{(1)}(x_1x_2)\left(\frac{dx_1}{x_1} + 
\frac{dx_1}{1-x_1}\right)\\ &+ L_{(1)}(x_1x_2) \frac{dx_2}{1-x_2}\\
={}&  L_{(1),\delta_1}(x_1, x_2) \frac{dx_1}{1-x_1} - L_{(1),\delta_2}(x_1, x_2)\left(\frac{dx_1}{x_1} + 
\frac{dx_1}{1-x_1}\right)\\
&+ L_{(1), \delta_2}(x_1, x_2) \frac{dx_2}{1-x_2}
\end{align*}
as expected.
\end{proof}

\subsection{Iterated integrals and formal polylogs}

For a given base point on $\mathcal{M}_{0,m}$, the Chen iterated integral defines a map from  $\mathcal{V}(\mathcal{M}_{0,m})=(U\mathfrak{p}_m)^*$ to holomorphic  functions on the universal cover of $\mathcal{M}_{0,m}$. We denote by $\iota$ the iterated integral starting at the tangential base point located at 
$x_1= \dots =x_n=0$ with the tangent vector $(1, \dots, 1)$. In particular, for the counit $\epsilon\in(U\mathfrak{p}_m)^*$ we have $\iota(\epsilon) = 1$.

By Proposition $4.1$ of \cite{zhao_analytic_1999},  polylogs $L_{a,\kappa}(x)$ are in the image of $\iota$. To simplify the notation, we shall restrict these functions to the domain $\mathcal D$ defined by \eqref{eq:domainD}.
The inclusion $H^\bullet(\mathcal{M}_{0,m})\hookrightarrow \Omega^\bullet(\mathcal{M}_{0,m})$ via Arnold's 1-forms then gives us the map
$$
\hat{\iota}=(\iota \otimes 1): \mathcal{V}(\mathcal{M}_{0,m}) \otimes H^\bullet(\mathcal{M}_{0,m}) \to \Omega^{\rm hol}(\mathcal{D}),
$$
where $\Omega^{\rm hol}(\mathcal{D})$ stands for holomorphic differential forms on the domain $\mathcal{D}$. The information about the map $\hat{\iota}$ can be summarized in the following theorem (see \cite{chen_iterated_1977}, Theorem $4.2.1$ and \cite{brown_multiple_2009}, Section $3.6$):

\begin{Th}
    The map $\hat{\iota}$ is an injective map of differential graded commutative algebras, where on $\mathcal{V}(\mathcal{M}_{0,m})\otimes H^{\bullet}(\mathcal{M}_{0,m})$ one considers the differential \eqref{eq: d}, and on $\Omega^{\rm hol}(\mathcal{D})$ the de Rham differential.
\end{Th}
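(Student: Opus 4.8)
The statement combines Chen's iterated-integral calculus with the formality of $\mathcal{M}_{0,m}$. The plan is to separate the three assertions — chain map, algebra map, injectivity — deducing the first two formally from Chen's identities and reducing the third to Chen's $\pi_1$-de Rham theorem.

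First I would recall the classical bar-construction picture. Chen's iterated integral sends a word $[\omega_1|\dots|\omega_k]$ of holomorphic $1$-forms to the (a priori multivalued) function $x\mapsto \int_\gamma \omega_1\cdots\omega_k$ over a path $\gamma$ from the tangential base point to $x$; with the prescribed tangential base point this is a well-defined holomorphic function on the universal cover of $\mathcal{D}$. Two standard identities govern it: the shuffle formula $\iota([\omega_1|\dots|\omega_p])\cdot\iota([\omega_{p+1}|\dots|\omega_{p+q}]) = \iota\bigl([\omega_1|\dots|\omega_p]\shuffle[\omega_{p+1}|\dots|\omega_{p+q}]\bigr)$, and, since the Arnold forms are closed, the differential formula $d\,\iota([\omega_1|\dots|\omega_k]) = \iota([\omega_1|\dots|\omega_{k-1}])\,\omega_k$. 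The Arnold inclusion $H^\bullet(\mathcal{M}_{0,m})\hookrightarrow \Omega^{\rm hol}(\mathcal{D})$ sending a class to its logarithmic representative is a morphism of CDGAs (this is precisely the formality of $\mathcal{M}_{0,m}$). Reading the shuffle identity on $\mathcal{V}(\mathcal{M}_{0,m})\otimes H^\bullet(\mathcal{M}_{0,m})$ then says exactly that $\hat\iota$ is an algebra map, and reading the differential identity there says exactly that $\hat\iota$ intertwines the differential \eqref{eq: d} with de Rham $d$; compatibility with the grading by form degree and with the unit ($\hat\iota(\epsilon\otimes 1)=1$) is immediate. All of this can be quoted essentially verbatim from \cite{chen_iterated_1977} and \cite{brown_multiple_2009}.

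The substantive point is injectivity, equivalently injectivity of $\iota$ on $\mathcal{V}(\mathcal{M}_{0,m})$. Here I would invoke Chen's $\pi_1$-de Rham theorem: for a smooth variety $X$ that is formal and whose unipotent fundamental group controls its bar construction, the iterated-integral map from the reduced bar construction on a minimal model of $\Omega^\bullet(X)$ to functions on the universal cover is injective, landing in the Hopf algebra of functions pulled back from the unipotent completion of $\pi_1$. For $X=\mathcal{M}_{0,m}$, $H^\bullet$ is generated in degree one with the quadratic Arnold relations and $\mathcal{M}_{0,m}$ is formal, so $\mathcal{V}(\mathcal{M}_{0,m})=(U\mathfrak{p}_m)^*$ is the dual of the enveloping algebra of its holonomy (Malcev) Lie algebra $\mathfrak{p}_m$; injectivity of $\iota$ is then faithfulness of the completed monodromy representation of the connection $\sum X_{ab}\,\omega_{ab}$, which is classical. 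A more self-contained alternative is induction on the length $k$ of bar elements using the length filtration: length $\le 1$ is the linear independence of the distinct logarithmic forms $\omega_{ab}$, and the inductive step combines the differential formula above with injectivity on the associated graded supplied by the quadratic-duality embedding $\mathcal{V}(\mathcal{M}_{0,m})\hookrightarrow {\rm Ass}(V)$ from Section \ref{sec:quadratic-lie}.

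I expect the genuine obstacle to be the bookkeeping around the holomorphic, tangential-base-point setting in Chen's theorem, and checking that restricting to the polydisc $\mathcal{D}$ loses no information: the iterated integrals of Arnold forms extend to the universal cover of all of $\mathcal{M}_{0,m}$, and since $\mathcal{D}$ is connected, open, and nonempty, restriction of holomorphic functions to $\mathcal{D}$ is injective, so injectivity over $\mathcal{M}_{0,m}$ transfers. The algebra and chain-map properties, by contrast, are formal consequences of Chen's two identities and functoriality of the bar construction applied to the CDGA morphism $H^\bullet(\mathcal{M}_{0,m})\hookrightarrow\Omega^{\rm hol}(\mathcal{D})$.
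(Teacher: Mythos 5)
Your proposal is correct and follows essentially the same route as the paper, which states this result without proof and simply cites Chen (\cite{chen_iterated_1977}, Theorem 4.2.1) and Brown (\cite{brown_multiple_2009}, Section 3.6) — exactly the two identities (shuffle product, differential formula for closed forms) and the $\pi_1$-de Rham injectivity statement you invoke. Your additional remarks on the tangential base point and on transferring injectivity to the polydisc $\mathcal{D}$ by the identity principle are correct and fill in bookkeeping the paper leaves implicit.
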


For any pair $(a,\kappa)$ we shall denote by $l_{a,\kappa}\in (U\mathfrak{p}_m)^*$ the $\iota$-preimage of the function $L_{a,\kappa}$. We then define, for a given $m = n + 3$, the vector subspace $\mathcal{L}_m \subset (U\mathfrak{p}_m)^*$ via
$$
%\begin{align*}
%\mathcal{L}^m_< &= \mathbb{C}\langle l_{a,\kappa} \mid N\geq 1,\;  a \in \mathbb{Z}_{\geq 1}^N,\; \kappa\colon\{1,\dots, N\}\to\{1,\dots, n\}\text{ increasing}\rangle\\
\mathcal{L}_m = \mathbb{C}\langle l_{a,\kappa} \mid N\geq 1,\,  a \in \mathbb{Z}_{\geq 1}^N,\; \kappa\colon\{1,\dots, N\}\to\{1,\dots, n\}\text{ non-decreasing}\rangle.
%\end{align*}
$$
We denote by 
\begin{equation*}      \label{eq:pi}
    \pi: U\mathfrak{p}_m \to \mathcal{L}_m^*
\end{equation*}
the natural projection dual to the injection $\mathcal{L}_m \rightarrowtail (U\mathfrak{p}_m)^*$.

\subsection{$U\mathfrak{p}_m$-module structure on $\mathcal{L}_m^*$}

%In this Section, we will define a structure of $U\mathfrak{p}_m$-modules on the duals $\mathcal{L}_m^*$  of the spaces $\mathcal{L}_m$ spanned by formal polylogs. 

Define a map
$$
\delta=\hat{\iota}^{-1} \circ d \circ \iota: 
\mathcal{L}_m \to (U\mathfrak{p}_m)^* 
\otimes H^1(\mathcal{M}_{0,m}),
$$
where $d$ is the de Rham differential, and $V=H^1(\mathcal{M}_{0,m})$ is represented by Arnold 1-forms.

\begin{Prop}
    The natural differential \eqref{eq: d} on $(U\mathfrak{p}_m)^* \otimes H(\mathcal{M}_{0,m})$ restricts to $\mathcal{L}_m \otimes H(\mathcal{M}_{0,m})$, and induces a $U\mathfrak{p}_m$-module structure on $\mathcal{L}_m^*$. The natural projection $\pi: U\mathfrak{p}_m \to \mathcal{L}_m^*$ is a $U\mathfrak{p}_m$-module morphism. The kernel ${\rm ker} \, \pi = \mathcal{L}_m^\perp$ is a left ideal in $U\mathfrak{p}_m$.
\end{Prop}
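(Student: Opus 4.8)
The statement bundles three claims: (1) the differential \eqref{eq: d} restricts to $\mathcal{L}_m \otimes H(\mathcal{M}_{0,m})$; (2) this makes $\mathcal{L}_m^*$ into a $U\mathfrak{p}_m$-module via the recipe of Proposition~\ref{prop: key}, so that $\pi$ is a module morphism; (3) $\ker\pi = \mathcal{L}_m^\perp$ is a left ideal. The logical skeleton is: establish (1), deduce (2) via the machinery of Section~\ref{sec: quadratic duality}, then obtain (3) for free from Proposition~\ref{prop:new}. The real content is (1).

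\emph{Step 1 (stability of $\mathcal{L}_m$ under $\delta$).} First I would observe that the map $d$ of \eqref{eq: d} agrees, under the iterated integral $\hat\iota$, with the de Rham differential on $\Omega^{\rm hol}(\mathcal D)$ --- this is precisely the statement that $\hat\iota$ is a dg-algebra morphism. So $\delta = \hat\iota^{-1}\circ d \circ \iota$ is just ``compute the de Rham differential of the polylog and re-express it in terms of formal polylogs and Arnold 1-forms.'' The key input is Proposition~\ref{prop: dL}: the de Rham differential $dL_{a,\kappa}$ is a \emph{finite $\mathbb{C}$-linear combination of polylogs $L_{a',\kappa'}$ wedged with Arnold 1-forms} $\frac{dy_{\kappa,i}}{y_{\kappa,i}}$ and $\frac{dy_{\kappa,i}}{y_{\kappa,i}-1}$ (and $\frac{dy_{\kappa,i}}{y_{\kappa,i}}$ again). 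Here one must check that each 1-form appearing is indeed in the span of the $\omega_{ab}$: since $y_{\kappa,i} = \prod_{\kappa(i)\le j<\kappa(i+1)} x_j = z_{\kappa(i)}/z_{\kappa(i+1)}$, we have $\frac{dy_{\kappa,i}}{y_{\kappa,i}} = \frac{dz_{\kappa(i)}}{z_{\kappa(i)}} - \frac{dz_{\kappa(i+1)}}{z_{\kappa(i+1)}}$, a combination of $\omega_{\alpha,\kappa(i)}$-type forms (with $\kappa(N+1)=\beta$ giving $\omega_{\beta\bullet}$-type, so effectively $X_{\beta\omega}$), and $\frac{dy_{\kappa,i}}{y_{\kappa,i}-1} = \frac{dz_{\kappa(i)} - dz_{\kappa(i+1)}}{z_{\kappa(i)}-z_{\kappa(i+1)}} = \omega_{\kappa(i),\kappa(i+1)}$ (or $\omega_{\kappa(i)\beta}$). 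All of these lie in $H^1(\mathcal{M}_{0,m})$, so $\delta(\mathcal{L}_m) \subset \mathcal{L}_m \otimes H^1$. Applying $d$ once more and using $d^2=0$ in $\Omega^{\rm hol}$ (equivalently $\omega\wedge\omega' = 0$ in $\mathcal{A}_R$ by Arnold relations), the differential \eqref{eq: d} restricts to an endomorphism of $\mathcal{L}_m \otimes H(\mathcal{M}_{0,m})$ squaring to zero.

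\emph{Step 2 (module structure and $\pi$).} Now dualize. The restricted differential $d|_{\mathcal{L}_m\otimes\mathcal A_R}$ is exactly the operator $d_M$ attached (as in Section~\ref{sec: quadratic duality}) to the transpose map $\delta\colon \mathcal{L}_m \to \mathcal{L}_m\otimes V$; since $d_M^2 = 0$, Proposition~\ref{prop: key} gives a $\mathbb{L}_{R^\perp} = \mathfrak{p}_m$-action, hence a $U\mathfrak{p}_m$-module structure, on $\mathcal{L}_m^* = (\mathcal{L}_m)^*$. To see that $\pi\colon U\mathfrak{p}_m \to \mathcal{L}_m^*$ is a $U\mathfrak{p}_m$-module morphism: $\pi$ is by definition the transpose of the inclusion $\mathcal{L}_m \hookrightarrow (U\mathfrak{p}_m)^*$, and this inclusion is compatible with the differentials (on $(U\mathfrak{p}_m)^*\otimes\mathcal A_R$ the differential \eqref{eq: d} is $d_{U\mathfrak{p}_m}$ by the Example preceding Proposition~\ref{prop: key}, and the restriction to $\mathcal{L}_m$ is what we just used); transposing a morphism of modules-with-cyclic-vector-data gives a module morphism, exactly as in Proposition~\ref{prop:new}.

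\emph{Step 3 (the left ideal).} Finally, since $\mathcal{L}_m \subset (U\mathfrak{p}_m)^*$ is a $U\mathfrak{p}_m$-submodule (it is closed under $\delta$, which is the dual of the action), Proposition~\ref{prop:new} applies verbatim with $\mathfrak g = \mathfrak{p}_m$ and $\mathcal{L} = \mathcal{L}_m$: the dual module $\mathcal{L}_m^*$ has cyclic vector $\pi(1)$, and the associated left ideal --- which is $\ker\pi$ --- equals $\mathcal{L}_m^\perp$. In particular $\mathcal{L}_m^\perp$ is a left ideal in $U\mathfrak{p}_m$.

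\emph{Main obstacle.} The only genuine work is verifying, in Step~1, that all the 1-forms produced by the differential formula of Proposition~\ref{prop: dL} are Arnold forms on $\mathcal{M}_{0,m}$ (not just on the chart $\mathcal D$) --- i.e.\ carefully tracking the $x\leftrightarrow z$ change of variables, especially the boundary conventions $\kappa(N+1)=\beta$ and the $y_{\kappa,i}\equiv 1$ degenerate case --- and that the resulting combination of polylogs stays inside $\mathcal{L}_m$ rather than escaping to a larger space. Everything downstream is formal, being an application of Propositions~\ref{prop: key} and~\ref{prop:new}.
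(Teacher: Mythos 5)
Your proposal is correct and follows essentially the same route as the paper: Proposition~\ref{prop: dL} shows $\delta$ lands in $\mathcal{L}_m\otimes H^1(\mathcal{M}_{0,m})$, $d^2=0$ is inherited from the de Rham differential via the injectivity of $\hat\iota$, and Propositions~\ref{prop: key} and~\ref{prop:new} then supply the module structure and the left-ideal conclusion. (A minor slip in your side-check: $\frac{dy_{\kappa,i}}{y_{\kappa,i}-1}$ equals $\omega_{\kappa(i),\kappa(i+1)}-\omega_{\alpha\,\kappa(i+1)}$ rather than $\omega_{\kappa(i),\kappa(i+1)}$ alone, but this is still a combination of Arnold forms, so the argument is unaffected.)
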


%\begin{Prop}
%    The map $\delta$ takes values in $\mathcal{L}_m \otimes H^1(\mathcal{M}_{0,m})$. When extended to $\mathcal{L}^m \otimes H(\mathcal{M}_{0,m})$, it squares to zero, and defines a $U\mathfrak{p}_m$-module structure on $\mathcal{L}_m^*$. 
%\end{Prop}

\begin{proof}
  By Proposition \ref{prop: dL}, de Rham differentials of polylogs are given by linear combinations of Arnold 1-forms with coefficients given by polylogs. By injectivity of $\hat{\iota}$, this implies that 
  the map $\delta$ takes values in $\mathcal{L}_m \otimes H^1(\mathcal{M}_{0,m})$, as required.

  Since the de Rham differential on $\Omega^{\bullet}(\mathcal{M}_{0,m})$ squares to zero, so does its preimage on $\mathcal{L}_m \otimes H^{\bullet}(\mathcal{M}_{0,m})$ given by the extension of $\delta$. By Proposition \ref{prop: key}, this implies that $\delta$ defines a
  $U\mathfrak{p}_m$-module structure on $\mathcal{L}_m^*$.  

  By Proposition \ref{prop:new}, the kernel of the projection $\pi$ is a left ideal in $U\mathfrak{p}_m$, and it coincides with $\mathcal{L}_m^\perp$.
\end{proof}

\begin{Cor}\label{cor: delta_l}
The coaction map $\delta$ takes the following values on formal polylogs $l_{a,\kappa}$:
\begin{align*}
\delta l_{a,\kappa}  &= \sum_{i=1, a_i\neq 1}^N l_{a_{(i)}^-, \kappa}\,\otimes\frac{dy_{\kappa,i}}{y_{\kappa,i}}\\
                &- \sum_{i=1, a_i = 1}^N l_{a_{(i)}^\wedge, \kappa\circ\delta_i}\,\otimes\frac{dy_{\kappa,i}}{y_{\kappa,i} - 1}\\
                &+ \sum_{i=1, a_i = 1}^{N-1} l_{a_{(i)}^\wedge, \kappa\circ\delta_{i+1}}\,\otimes\Bigl(\frac{dy_{\kappa,i}}{y_{\kappa,i} - 1} - \frac{dy_{\kappa,i}}{y_{\kappa,i}}\Bigr),
\end{align*}
where
\begin{align*}
a_{(i)}^- &= (a_1, \dots, a_i-1,\dots, a_N) \in \Z_{\geq 1}^{N}\\
a_{(i)}^\wedge &= (a_1, \dots, \widehat{a_i},\dots, a_N) \in \Z_{\geq 1}^{N-1}
\end{align*}
and the ``face maps'' $\delta_i\colon\{1,\dots, N-1\}\to\{1,\dots, N\}$
are given by
$$\delta_i(j) = 
\begin{cases}
    j & \text{if } j<i,\\
    j+1 & \text{otherwise}.
\end{cases}
$$
If $y_{\kappa, i}\equiv 1$, $\frac{dy_{\kappa,i}}{y_{\kappa,i} - 1}$ is understood to be $0$.
\end{Cor}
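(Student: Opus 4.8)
The plan is to deduce this directly from Proposition~\ref{prop: dL} together with the injectivity and differential-compatibility of $\hat\iota$. By definition $\delta = \hat\iota^{-1}\circ d\circ\iota$ on $\mathcal{L}_m$, and $\iota(l_{a,\kappa}) = L_{a,\kappa}$, so $d(\iota(l_{a,\kappa})) = dL_{a,\kappa}$. Proposition~\ref{prop: dL} expresses $dL_{a,\kappa}$ as an explicit finite sum of holomorphic $1$-forms of the shape $\frac{dy_{\kappa,i}}{y_{\kappa,i}}$ and $\frac{dy_{\kappa,i}}{y_{\kappa,i}-1}$, each multiplied by a polylog $L_{a',\kappa'}$ with $a'\in\{a_{(i)}^-,a_{(i)}^\wedge\}$ and $\kappa'\in\{\kappa,\kappa\circ\delta_i,\kappa\circ\delta_{i+1}\}$. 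The first thing I would check is that each such $\kappa'$ is again non-decreasing (immediate, since $\kappa$ is non-decreasing and the face maps $\delta_i$ are strictly increasing), so that $L_{a',\kappa'} = \iota(l_{a',\kappa'})$ for a bona fide formal polylog $l_{a',\kappa'}\in\mathcal{L}_m$.

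Next I would verify that each $1$-form appearing is the image, under the inclusion $H^1(\mathcal{M}_{0,m})\hookrightarrow\Omega^1(\mathcal{M}_{0,m})$ via Arnold forms, of a well-defined cohomology class. Using $z_i = x_i\cdots x_n$, $u_{ij} = x_i\cdots x_{j-1}$ and the identities recorded earlier (namely $\omega_{\alpha j} = \sum_{k=j}^n \frac{dx_k}{x_k}$ and $\omega_{ij} = \sum_{k=j}^n\frac{dx_k}{x_k} + \frac{du_{ij}}{u_{ij}-1}$), one writes $\frac{dy_{\kappa,i}}{y_{\kappa,i}} = \sum_{\kappa(i)\le j<\kappa(i+1)}\frac{dx_j}{x_j}$ and $\frac{dy_{\kappa,i}}{y_{\kappa,i}-1} = \frac{du_{\kappa(i),\kappa(i+1)}}{u_{\kappa(i),\kappa(i+1)}-1}$ (with $\kappa(N+1)=\beta$) as explicit $\mathbb{C}$-linear combinations of the Arnold generators $\omega_{ab}$. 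In the degenerate situation $y_{\kappa,i}\equiv 1$, i.e. $\kappa(i) = \kappa(i+1)$, the form $\frac{dy_{\kappa,i}}{y_{\kappa,i}-1}$ is zero and the convention in Proposition~\ref{prop: dL} matches the one in the statement, so no term is lost.

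With these observations, the right-hand side of Proposition~\ref{prop: dL} is manifestly of the form $\hat\iota$ applied to the element of $\mathcal{L}_m\otimes H^1(\mathcal{M}_{0,m})$ written in the statement of the corollary (with the same coefficients and the same $1$-forms, now read as cohomology classes). Since $\hat\iota = \iota\otimes 1$ is injective, applying $\hat\iota^{-1}$ to the equality $dL_{a,\kappa} = \hat\iota(\text{that element})$ yields exactly the claimed formula for $\delta l_{a,\kappa}$. I do not expect a genuine obstacle here: the entire content is already in Proposition~\ref{prop: dL}, and the only points requiring a line of justification are the non-decreasing property of the composed maps $\kappa\circ\delta_i$ and the identification of the appearing $1$-forms with Arnold cohomology classes; both are routine.
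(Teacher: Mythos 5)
Your proposal is correct and is essentially the paper's own argument: the corollary is obtained by reading Proposition~\ref{prop: dL} through the definition $\delta=\hat\iota^{-1}\circ d\circ\iota$, using the injectivity of $\hat\iota$ and the identification of the $1$-forms $\frac{dy_{\kappa,i}}{y_{\kappa,i}}$ and $\frac{dy_{\kappa,i}}{y_{\kappa,i}-1}$ with classes in $H^1(\mathcal{M}_{0,m})$ via the Arnold generators. The two routine checks you flag (that $\kappa\circ\delta_i$ is again non-decreasing and that the degenerate case $y_{\kappa,i}\equiv 1$ is handled by the stated convention) are exactly the points one needs, so nothing is missing.
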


\begin{Prop}\label{prop:l-pairing}
For any monomial $w$ in $U\mathfrak{f}_{m-2}\subset U\mathfrak{p}_m$ and any $(a,\kappa)$ we have
$$
l_{a,\kappa}(w)=
\begin{cases}
(-1)^N & \text{if } w=w_{a,\kappa},\\
0 & \text{otherwise}.
\end{cases}
$$
\end{Prop}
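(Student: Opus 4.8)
The plan is to induct on the weight $|a| = a_1 + \dots + a_N$ (equivalently, on the length of the monomial $w_{a,\kappa}$), using the coaction formula from Corollary \ref{cor: delta_l} together with the fact that $\delta$ is dual to the left $U\mathfrak{p}_m$-action. First I would record the base case $N=0$: then $l_\emptyset = \epsilon$ is the counit, $w_{\emptyset}=1$, and $\epsilon(w) = 0$ for every nonempty monomial $w$, while $\epsilon(1)=1=(-1)^0$. For the inductive step, I would use that for any $X \in V^* = H^1(\mathcal{M}_{0,m})^\vee$ (i.e. any $X_{ij}$) and any $u \in U\mathfrak{p}_m$, the pairing satisfies $l_{a,\kappa}(X \cdot u) = \langle X \otimes l'_{a,\kappa} \,,\, ?\rangle$ — more precisely, writing $\delta l_{a,\kappa} = \sum l_{(a,\kappa)'} \otimes \theta_{(a,\kappa)'}$ with $\theta$ an Arnold $1$-form, one has $l_{a,\kappa}(X \cdot u) = \sum \langle X, \theta_{(a,\kappa)'}\rangle \, l_{(a,\kappa)'}(u)$, where $\langle \cdot,\cdot\rangle$ is the pairing between $V^*$ and $V$. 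This is exactly the statement that $\pi\colon U\mathfrak{p}_m \to \mathcal{L}_m^*$ intertwines left multiplication with the action dual to $\delta$.

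Next I would peel off the leftmost letter of an arbitrary monomial $w \in U\mathfrak{f}_{m-2}$: write $w = X \cdot u$ with $X \in \{X_{1\beta},\dots,X_{n\beta}, X_{\beta\omega}\}$ and $u$ a (shorter) monomial in $U\mathfrak{f}_{m-2}$. Then $l_{a,\kappa}(w) = \sum_{(a,\kappa)'} \langle X, \theta_{(a,\kappa)'}\rangle\, l_{(a,\kappa)'}(u)$, and I would apply the inductive hypothesis to each $l_{(a,\kappa)'}(u)$, which is nonzero only when $u = w_{(a,\kappa)'}$, in which case it equals $(-1)^{N'}$. The crucial input is the evaluation of the pairings $\langle X_{i\beta}, \theta\rangle$ and $\langle X_{\beta\omega}, \theta\rangle$ for the three families of $1$-forms appearing in Corollary \ref{cor: delta_l}: namely $\frac{dy_{\kappa,i}}{y_{\kappa,i}}$, $\frac{dy_{\kappa,i}}{y_{\kappa,i}-1}$, and the combination $\frac{dy_{\kappa,i}}{y_{\kappa,i}-1} - \frac{dy_{\kappa,i}}{y_{\kappa,i}}$. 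Here $y_{\kappa,i} = \prod_{\kappa(i)\le j<\kappa(i+1)} x_j = z_{\kappa(i)}/z_{\kappa(i+1)}$, so $\frac{dy_{\kappa,i}}{y_{\kappa,i}} = \omega_{\alpha\kappa(i)} - \omega_{\alpha\kappa(i+1)}$ (a combination of $\frac{dx_k}{x_k}$'s, which pairs to zero with $X_{i\beta}$ and with $X_{\beta\omega}$ after accounting for the $X_{\beta\omega}$-vs-$X_{\alpha n}$ substitution — one must be careful with the $i=N$ term where $\kappa(N+1)=\beta$ and $z_\beta=1$, so $\frac{dy_{\kappa,N}}{y_{\kappa,N}} = \frac{dz_{\kappa(N)}}{z_{\kappa(N)}}$), whereas $\frac{dz_{\kappa(i)}}{z_{\kappa(i)}-1} = \omega_{\kappa(i)\beta}$ pairs to $1$ with $X_{\kappa(i)\beta}$ and $0$ with everything else. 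So the only letters $X$ that can contribute are $X_{\kappa(i)\beta}$ (from the middle sum, when $i=N$ and $a_N=1$) and $X_{\beta\omega}$ (from the first sum, when $a_N\ne 1$, since then $\frac{dy_{\kappa,N}}{y_{\kappa,N}} = \frac{dx_n}{x_n} + \dots$ contains the $X_{\beta\omega}$-dual form $\frac{dx_n}{x_n}$ with coefficient related to whether $\kappa(N)=n$ — this needs care).

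Carrying this through, peeling the leftmost letter of $w_{a,\kappa} = X_{\beta\omega}^{a_N-1}X_{\kappa(N)\beta}\cdots$ either strips an $X_{\beta\omega}$ (when $a_N>1$), reducing to $w_{a_{(N)}^-,\kappa}$ with sign $(-1)^N$, consistent since $l_{a,\kappa}(w) = 1\cdot l_{a_{(N)}^-,\kappa}(u) = (-1)^N$; or strips $X_{\kappa(N)\beta}$ (when $a_N=1$), reducing to $w_{a_{(N)}^\wedge,\kappa\circ\delta_N}$ with the middle-sum coefficient $-1$, giving $l_{a,\kappa}(w) = (-1)\cdot(-1)^{N-1} = (-1)^N$. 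And if $w$ is any other monomial in $U\mathfrak{f}_{m-2}$: its leftmost letter, after the $\delta$-expansion and the induction, forces $w$ to have exactly the form dictated at each stage, so any deviation makes the pairing vanish. I expect the main obstacle to be the bookkeeping of the three $1$-form families under the pairing with the generators $X_{i\beta}, X_{\beta\omega}$ — in particular correctly handling the boundary index $i=N$ (where $y_{\kappa,N} = z_{\kappa(N)}$, not a ratio, so its logarithmic derivative involves $\omega_{\alpha\kappa(N)}$ rather than a difference) and verifying that the spurious $\frac{dx_k}{x_k}$-type contributions, which pair nontrivially with $X_{\alpha k}$-type generators, never pair with the $X_{i\beta}$ or $X_{\beta\omega}$ generators we are peeling, so that they genuinely drop out — and making sure the third sum in Corollary \ref{cor: delta_l} contributes nothing because $\kappa\circ\delta_{i+1}$ records an $X_{\kappa(i+1)\beta}$ while the available $1$-form $\frac{dy_{\kappa,i}}{y_{\kappa,i}-1}$ is dual to $X_{\kappa(i)\beta}$, a mismatch when $\kappa(i)\ne\kappa(i+1)$, and when $\kappa(i)=\kappa(i+1)$ that $1$-form is identically zero.
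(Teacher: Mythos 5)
Your proposal is correct and follows essentially the same route as the paper: induction on the weight $|a|$, peeling the leftmost letter of $w$ via the duality between left multiplication and the coaction $\delta$, and observing that among the $1$-forms in Corollary \ref{cor: delta_l} only the $i=N$ terms pair nontrivially with the generators $X_{i\beta}$, $X_{\beta\omega}$ of $\mathfrak{f}_{m-2}$ (the paper packages this last step as applying the projection onto the span of $\frac{dz_i}{z_i-1}$ and $\frac{dx_n}{x_n}$). The one point you flag as needing care is in fact harmless: since $\kappa(N)\leq n$, the form $\frac{dy_{\kappa,N}}{y_{\kappa,N}}=\sum_{k=\kappa(N)}^{n}\frac{dx_k}{x_k}$ always contains $\frac{dx_n}{x_n}$ with coefficient $1$, so the $X_{\beta\omega}$-pairing is exactly as you use it.
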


\begin{proof}
The proof is by induction on the weight $|a|:=\sum_i a_i$. For $|a|=0$, i.e.\ $a=\emptyset$, we have $l_\emptyset = \varepsilon$ and $w_\emptyset=1$, so the statement is true.

By Corollary \ref{cor: delta_l}, we have
\begin{equation}\label{eq:proj-delta}
(1\otimes p_2)\circ \delta (l_{a,\kappa}) =
\begin{cases}
l_{a^-, \kappa} \otimes \frac{dx_n}{x_n} & \text{if }a_N > 1,\\
-l_{a', \kappa'}\otimes \frac{dz_{\kappa(N)}}{z_{\kappa(N) - 1}} & \text{if } a_N = 1,
\end{cases}
\end{equation}
where $a^-=(a_1,\dots,a_{N-1}, a_N-1)$,  $a'=(a_1,\dots,a_{N-1})$ and $\kappa' = \kappa|_{\{1,\dots,N-1\}}$. Since
$$(1\otimes p_2)(\Omega) = \sum_{i=1}^n X_{i \beta}\otimes \frac{dz_i}{z_i -1 } + X_{\beta \omega}\otimes \frac{dx_n}{x_n},$$
we get from \eqref{eq:proj-delta} for any $w'\in U\mathfrak{f}_{n+1}$ of weight $|a|-1$,
\begin{align*}
l_{a,\kappa}(X_{i\beta}\,w') &= 
\begin{cases}
    -l_{a',\kappa'}(w') & \text{if } a_N=1 \text{ and } i=\kappa(N),\\
    0 & \text{otherwise},
\end{cases}\\
l_{a,\kappa}(X_{\beta\omega}\,w') &= 
\begin{cases}
    l_{a^-, \kappa}(w') & \text{if }a_N>1,\\
    0 &  \text{if }a_N=1.
\end{cases}
\end{align*}
By assumption, the statement of the Proposition holds true for the right-hand sides, and therefore it is also true for $l_{a,\kappa}$.
\end{proof}

\begin{Prop}\label{prop:l-f-ideal}
The $U\mathfrak{f}_{m-2}$-module morphism $\pi|_{U\mathfrak{f}_{m-2}}\colon U\mathfrak{f}_{m-2}\to \mathcal{L}_m^*$ is surjective and its kernel is the left ideal $I_m$.
\end{Prop}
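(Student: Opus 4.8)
The plan is to read everything off from Proposition~\ref{prop:l-pairing} together with the splitting $U\mathfrak{f}_{m-2}=C_m\oplus I_m$ of Proposition~\ref{prop:C-complement-I}. I work in a fixed degree throughout, so all spaces are finite-dimensional and graded duality is unproblematic.

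For surjectivity, I would use that $\pi|_{U\mathfrak{f}_{m-2}}$ is the linear map dual to the composite $\mathcal{L}_m\hookrightarrow(U\mathfrak{p}_m)^*\twoheadrightarrow(U\mathfrak{f}_{m-2})^*$, i.e.\ to $l\mapsto l|_{U\mathfrak{f}_{m-2}}$; so $\pi|_{U\mathfrak{f}_{m-2}}$ is surjective iff no nonzero $l\in\mathcal{L}_m$ vanishes on all of $U\mathfrak{f}_{m-2}$. Writing such an $l$ as $\sum c_{a,\kappa}\,l_{a,\kappa}$ and pairing it against $w_{a_0,\kappa_0}\in U\mathfrak{f}_{m-2}$, Proposition~\ref{prop:l-pairing} gives $(-1)^{N_0}c_{a_0,\kappa_0}=0$ for every $(a_0,\kappa_0)$ — here one uses that $(a,\kappa)\mapsto w_{a,\kappa}$ is injective, since from the word $w_{a,\kappa}=X_{\beta\omega}^{a_N-1}X_{\kappa(N)\beta}\cdots X_{\beta\omega}^{a_1-1}X_{\kappa(1)\beta}$ one reads off $N$, then the $\kappa(i)$ and the $a_i$ from the positions of the letters $X_{j\beta}$ and the intervening powers of $X_{\beta\omega}$. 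Hence $l=0$. As a byproduct the $l_{a,\kappa}$ are linearly independent, so they form a basis of $\mathcal{L}_m$ and $\pi$ in fact restricts to an isomorphism $C_m\xrightarrow{\sim}\mathcal{L}_m^*$ sending $w_{a,\kappa}$ to $(-1)^N e_{a,\kappa}$, where $(e_{a,\kappa})$ is the dual basis of $\mathcal{L}_m^*$.

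For the kernel, since the $l_{a,\kappa}$ span $\mathcal{L}_m$ we have $\ker(\pi|_{U\mathfrak{f}_{m-2}})=\{u\in U\mathfrak{f}_{m-2}\mid l_{a,\kappa}(u)=0 \text{ for all } (a,\kappa)\}$. Recall from the proof of Proposition~\ref{prop:C-complement-I} that $I_m$ is spanned by the monomials in the free generators $X_{\beta\omega},X_{1\beta},\dots,X_{n\beta}$ that end in $X_{\beta\omega}$ or have some $X_{i\beta}$ to the left of some $X_{j\beta}$ with $i<j$, while $C_m$ is spanned by the complementary monomials $w_{a,\kappa}$; in particular no monomial appearing in $I_m$ equals any $w_{a',\kappa'}$, so Proposition~\ref{prop:l-pairing} yields $l_{a,\kappa}(I_m)=0$ for all $(a,\kappa)$. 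Decomposing $u=c+\iota$ with $c=\sum d_{a,\kappa}w_{a,\kappa}\in C_m$ and $\iota\in I_m$, we get $l_{a,\kappa}(u)=(-1)^N d_{a,\kappa}$, which vanishes for all $(a,\kappa)$ precisely when $c=0$, i.e.\ when $u\in I_m$. Thus $\ker(\pi|_{U\mathfrak{f}_{m-2}})=I_m$, which is a left ideal of $U\mathfrak{f}_{m-2}$ by construction; and $\pi|_{U\mathfrak{f}_{m-2}}$ is a morphism of $U\mathfrak{f}_{m-2}$-modules because $U\mathfrak{f}_{m-2}\subset U\mathfrak{p}_m$ is a subalgebra and $\pi$ is $U\mathfrak{p}_m$-linear.

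I do not expect a genuine obstacle here: the substance is entirely contained in Proposition~\ref{prop:l-pairing}, and the remainder is bookkeeping with the monomial bases of $C_m$ and $I_m$. The one point to state carefully is that the $l_{a,\kappa}$ are linearly independent and not merely spanning, which is exactly where the injectivity of $(a,\kappa)\mapsto w_{a,\kappa}$ enters.
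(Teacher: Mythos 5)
Your argument is correct and is essentially the paper's own proof: the paper likewise deduces surjectivity from the triviality of the left kernel of the pairing between $\mathcal{L}_m$ and $U\mathfrak{f}_{m-2}$ (via Proposition~\ref{prop:l-pairing}), and identifies the right kernel with the span of monomials not of the form $w_{a,\kappa}$, i.e.\ with $I_m$. You have merely made explicit the bookkeeping the paper leaves implicit (the injectivity of $(a,\kappa)\mapsto w_{a,\kappa}$, the resulting linear independence of the $l_{a,\kappa}$, and the degree-wise finite-dimensionality needed for the duality).
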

\begin{proof}
  By the previous Proposition \ref{prop:l-pairing}, the pairing between $\mathcal{L}_m \subset (U\mathfrak{p}_m)^*$ and $U\mathfrak{f}_{m-2} \subset U\mathfrak{p}_m$ has a trivial left kernel, which proves surjectivity. Its right kernel is the linear span of monomials which are not of the form $w_{a,\kappa}$. Hence, it is equal to $I_m$.
\end{proof}

\begin{Prop}\label{prop:s-in-kernel}
We have $U\mathfrak{p}_m\,s(\mathfrak{p}_{m-1})\subset\ker\pi$.

%For every $l_{a,\kappa}\in\mathcal L_m$ we have $l_{a,\kappa}(U\mathfrak{p}_m\,s(\mathfrak{p}_{m-1}))=\{0\}$, i.e.\ $U\mathfrak{p}_m\,s(\mathfrak{p}_{m-1})\subset\ker\pi$.
\end{Prop}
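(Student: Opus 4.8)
The plan is to use that $\ker\pi=\mathcal{L}_m^\perp$ is a left ideal in $U\mathfrak{p}_m$ (established in the proposition preceding Corollary \ref{cor: delta_l}) and to reduce the claim to a check on the finitely many degree-one generators of $s(\mathfrak{p}_{m-1})$.

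First I would record an elementary observation: a left ideal $I\subset U\mathfrak{g}$ is automatically closed under commutators of its own elements, since $x,y\in I$ forces $xy,yx\in I$ (a left ideal absorbs left multiplication) and hence $[x,y]=xy-yx\in I$; inducting on bracket length, $I$ contains the Lie subalgebra of $\mathfrak{g}$ generated by any subset of $I$. Applying this with $I=\ker\pi$, and recalling that $s(\mathfrak{p}_{m-1})$ is by construction the Lie subalgebra of $\mathfrak{p}_m$ generated by $X_{\alpha i}$ $(1\le i\le n-1)$ and $X_{ij}$ $(1\le i<j\le n)$, it suffices to show that each of these generators lies in $\ker\pi$. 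Then $s(\mathfrak{p}_{m-1})\subset\ker\pi$, and since $\ker\pi$ is a left ideal, $U\mathfrak{p}_m\,s(\mathfrak{p}_{m-1})\subset\ker\pi$.

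Second, I would check that $l_{a,\kappa}(X_{\alpha i})=0$ and $l_{a,\kappa}(X_{ij})=0$ for every pair $(a,\kappa)$. A one-line induction from Proposition \ref{prop: dL} shows that $L_{a,\kappa}$ has weight $|a|=\sum_k a_k$, so $l_{a,\kappa}$ is homogeneous of degree $|a|$ and pairs trivially with the degree-one elements $X_{\alpha i},X_{ij}$ unless $N=1$ and $a=(1)$. In that remaining case $y_{\kappa,1}=z_{\kappa(1)}/z_\beta=z_{\kappa(1)}$, so $L_{(1),\kappa}=-\log(1-z_{\kappa(1)})$ vanishes at the tangential base point and has differential $-\omega_{\kappa(1)\beta}$ (equivalently, Corollary \ref{cor: delta_l} gives $\delta l_{(1),\kappa}=-\varepsilon\otimes\omega_{\kappa(1)\beta}$), whence $l_{(1),\kappa}=-[\omega_{\kappa(1)\beta}]\in H^1(\mathcal{M}_{0,m})$. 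Using the dual generating sets $\{X_{\alpha k},X_{k\beta},X_{kl}\}$ of $V^*$ and $\{\omega_{\alpha k},\omega_{k\beta},\omega_{kl}\}$ of $V$, the form $\omega_{\kappa(1)\beta}$ is orthogonal to every $X_{\alpha i}$ and every $X_{ij}$, so the required pairings vanish.

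I do not anticipate a serious obstacle here. The only step needing a little care is the reduction — verifying that a left ideal already contains the Lie subalgebra generated by any of its subsets, so that it is enough to test the generators of $s(\mathfrak{p}_{m-1})$. After that everything comes down to the short weight-one computation identifying $l_{(1),\kappa}$ with $-\omega_{\kappa(1)\beta}$ and the obvious orthogonality of the Arnold forms $\omega_{j\beta}$ against the generators $X_{\alpha i}$ and $X_{ij}$ in the chosen dual bases.
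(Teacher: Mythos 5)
Your proposal is correct and follows essentially the same route as the paper: reduce to the degree-one generators $X_{\alpha i}$, $X_{ij}$ using that $\ker\pi$ is a left ideal (hence contains the Lie subalgebra generated by any subset), then use Proposition \ref{prop: dL}/Corollary \ref{cor: delta_l} to see that the only degree-one content of any $l_{a,\kappa}$ lies along $\omega_{\kappa(1)\beta}$, which is orthogonal to those generators. Your explicit homogeneity reduction to the single case $a=(1)$, $N=1$ is just a slightly more concrete packaging of the paper's observation that the $l_\emptyset$-terms of $\delta l_{a,\kappa}$ are killed by the projection $p_2$.
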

\begin{proof}
Since $\ker\pi$ is a left ideal in $U\mathfrak{p}_m$, it is enough to show that the map $\pi$ vanishes on generators of $s(\mathfrak{p}_{m-1})$. 
These generators are of the form
$X_{\alpha i}$ for $1 \leq i \leq n-1$ and $X_{ij}$ for $1 \leq i,j \leq n$.
For a formal polylog $l_{a,\kappa}$, we have
$$
l_{a,\kappa}(X) = \langle\delta l_{a,\kappa}, 1\otimes X\rangle,
$$
where we view $X$ as an element of $H_1(\mathcal M_{0,m})$ to make sense of the pairing. 

Observe that for any $(\alpha',\kappa')$, we have $l_{\alpha',\kappa'}(1)=0$ unless $\alpha'=\emptyset$. The expression for $\delta l_{a,\kappa}$ in Proposition \ref{prop: dL} shows that $(1\otimes p_2)(l_{a,\kappa})$ contains no $l_\emptyset$. Hence, we conclude that $\langle\delta l_{a,\kappa}, 1\otimes X\rangle=0$ for all generators $X$ of $s(\mathfrak{p}_{m-1})$ and for all formal polylogs $l_{a,\kappa}$. This completes the proof.
\end{proof}

We can now finally prove our main results.

\begin{proof}[Proofs of Theorems \ref{th: intro} and \ref{th: intro2}]
Theorem \ref{th: intro2} follows immediately from Propositions \ref{prop:l-pairing} and \ref{prop:s-in-kernel}. To show Theorem \ref{th: intro},  we consider the joint kernel of formal polylogs
$$
\ker\pi\subset U\mathfrak{p}_m = U\mathfrak{f}_{m-2}\oplus U\mathfrak{p}_m\,s(\mathfrak{p}_{m-1}).
$$
By Proposition \ref{prop:s-in-kernel}, it contains the space $U\mathfrak{p}_m\,s(\mathfrak{p}_{m-1})$, and by Proposition \ref{prop:l-f-ideal} we have $  U\mathfrak{f}_{m-2} \cap \ker\pi = I_m$. Hence,
$$
\ker\pi = I_m \oplus U\mathfrak{p}_m\,s(\mathfrak{p}_{m-1}) = J_m,
$$
as required.
\end{proof}

\section{Polylogs and free Lie algebras} \label{sec: polylogs and free}

Recall that the natural map $\pi: U\mathfrak{p}_m \to \mathcal{L}_m^*$ is surjective. The aim of this Section is to describe the spaces
$$
\mathfrak{pl}_m  := \pi(\mathfrak{p}_m) \subset \mathcal{L}_m^*
$$
and Lie subalgebras
    \begin{equation*}
        \mathfrak{k}_m := \mathfrak{p}_m \cap J_m
    \end{equation*}
%
% $$
% \mathfrak{k}_m:= \mathfrak{f}_{m-2} \cap I_m
% $$
%
%\begin{equation*}
%begin{split}
  %  \mathfrak{pl}_m & := \pi(\mathfrak{p}_m) \subset \mathcal{L}_m^*, \\
   % \mathfrak{k}_m & := \mathfrak{f}_{m-2} \cap I_m,
%\end{split}
%\end{equation*}
for $m=4$ and $m=5$. It will be enough to study
$$
\mathfrak{l}_m:= \mathfrak{f}_{m-2} \cap I_m,
$$
since $\mathfrak{k}_m = \mathfrak{l}_m \oplus s(\mathfrak{p}_{m-1})$.
Recall that $\mathfrak{p}_m = \mathfrak{f}_{m-2} \oplus s(\mathfrak{p}_{m-1})$ and $s(\mathfrak{p}_{m-1}) \subset {\rm ker}(\pi)$, hence 
\begin{equation}
\label{eq:plm}
\mathfrak{pl}_m = \pi(\mathfrak{f}_{m-2}) = 
\mathfrak{f}_{m-2}/\mathfrak{l}_m.
\end{equation}
%In what follows, we consider the cases of $m=4$ and $m=5$. 

\subsection{Computation of $\mathfrak{k}_4$ and $\mathfrak{pl}_4$}
In the case of $m=4$, the Lie algebra $\mathfrak{f}_2$ is spanned by the generators $X_{1 \beta}$ and $X_{\beta \omega}$, and the ideal $I_4$ is of the form
\begin{equation}
\label{eq:I4}
I_4 = (U\mathfrak{f}_2) X_{\beta \omega}.
\end{equation}

We show the following.
\begin{Prop}       \label{prop: m=4}
If $m=4$, then
    \begin{equation*}
            \mathfrak{k}_4 = \mathfrak{l}_4 =\mathbb{C}X_{\beta \omega} \quad \text{ and } \quad
            \mathfrak{pl}_4 \cong \mathfrak{f}({\rm ad}_{X_{\beta \omega}}^k X_{1 \beta}; k=0, 1, \dots).
    \end{equation*}
\end{Prop}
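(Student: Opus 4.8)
The plan is to exploit the very explicit description of $I_4$ in \eqref{eq:I4} together with the fact that $\mathfrak{f}_2 = \mathbb{C}\langle X_{1\beta}, X_{\beta\omega}\rangle$ is a free Lie algebra on two generators, so that $U\mathfrak{f}_2$ is the free associative algebra on these two letters. Writing $A = X_{1\beta}$ and $B = X_{\beta\omega}$, the ideal $I_4 = (U\mathfrak{f}_2)B$ consists precisely of those elements of $U\mathfrak{f}_2$ all of whose monomials end in the letter $B$. The subspace $C_4$ of Proposition \ref{prop:C-complement-I} is then $C_4 = \mathbb{C}\langle B^{a-1}A : a\geq 1\rangle = \mathbb{C}\langle B^k A : k\geq 0\rangle$ (since with $n=1$ the only $\kappa$ is constant, and a word $w_{a,\kappa}$ has the single block $B^{a_1-1}A$). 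So I must compute $\mathfrak{l}_4 = \mathfrak{f}_2 \cap I_4$, i.e. which Lie elements lie entirely in the span of monomials ending in $B$.

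\emph{First} I would check that $X_{\beta\omega} = B \in \mathfrak{l}_4$: this is immediate since $B$ is a monomial ending in $B$, so $B \in I_4 \cap \mathfrak{f}_2$. \emph{Next}, to see that $\mathbb{C}B$ is \emph{all} of $\mathfrak{l}_4$, I would argue by a grading/complement count. By Proposition \ref{prop: complements} (applied with $m=4$, or directly from Proposition \ref{prop:C-complement-I}) we have $U\mathfrak{f}_2 = C_4 \oplus I_4$, and $\pi|_{U\mathfrak{f}_2}$ identifies $\mathcal{L}_4^* \cong U\mathfrak{f}_2/I_4 \cong C_4$ as graded vector spaces. Restricting to the Lie algebra, $\mathfrak{pl}_4 = \pi(\mathfrak{f}_2) = \mathfrak{f}_2/\mathfrak{l}_4$ sits inside $C_4$; concretely $\pi$ sends a Lie word to the $C_4$-component of its expansion. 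The key point is that the Lie elements $\mathrm{ad}_B^k(A)$, $k\geq 0$, map under $\pi$ to $(-1)^k B^k A + (\text{monomials not ending in }A)$... — more carefully, $\mathrm{ad}_B^k(A) = \sum_{j=0}^k (-1)^j \binom{k}{j} B^{k-j} A B^{j}$, whose unique monomial ending in the letter $A$ is $B^k A$ (the $j=0$ term). Hence $\pi(\mathrm{ad}_B^k(A))$ spans, as $k$ ranges over $\mathbb{Z}_{\geq 0}$, a subspace of $C_4$ whose image modulo the span of monomials-not-ending-in-$A$ is the full span of $\{B^k A\}$; in particular the $\mathrm{ad}_B^k(A)$ are linearly independent in $\mathfrak{pl}_4$ and generate a free Lie subalgebra there (they are free already in $\mathfrak{f}_2$, being part of a Hall-type basis, and $\pi$ cannot introduce relations among them without hitting $I_4$).

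\emph{Then} I would close the argument by a dimension count in each weight. In $\mathfrak{f}_2$, weight $w$, the elements of the free Lie algebra on $\{\mathrm{ad}_B^k A\}$ together with $B$ itself must account for all of $\mathfrak{f}_2$: indeed $\mathfrak{f}_2 = \mathbb{C}B \oplus \mathfrak{f}(\mathrm{ad}_B^k A)$ is a standard decomposition of the free Lie algebra on two generators (every Lie word either is $B$ up to scalar, or, modulo lower reorganization, lies in the ideal generated by $A$, which is free on $\{\mathrm{ad}_B^k A\}$). Since $B \in \mathfrak{l}_4$ and $\mathfrak{f}(\mathrm{ad}_B^k A) \cap \mathfrak{l}_4 = 0$ (because $\pi$ is injective on it by the previous paragraph), we get $\mathfrak{l}_4 = \mathbb{C}B$ exactly, and $\mathfrak{pl}_4 \cong \mathfrak{f}(\mathrm{ad}_B^k A : k\geq 0)$. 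Finally $\mathfrak{k}_4 = \mathfrak{l}_4 \oplus s(\mathfrak{p}_3)$, but $\mathfrak{p}_3 = 0$ (the moduli space $\mathcal{M}_{0,3}$ is a point, $n=0$), so $\mathfrak{k}_4 = \mathfrak{l}_4 = \mathbb{C}X_{\beta\omega}$.

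\emph{The main obstacle} I anticipate is making rigorous the claim that $\mathfrak{f}(\mathrm{ad}_B^k A : k\geq 0) \cap I_4 = 0$, i.e. that no nonzero Lie element of the ideal $(A)\subset\mathfrak{f}_2$ has all its associative monomials ending in $B$. The clean way is the complement statement: $\pi$ restricted to $\mathfrak{f}(\mathrm{ad}_B^k A)$ is injective because a nonzero element there, when expanded in monomials, necessarily contains at least one monomial ending in $A$ — equivalently, the projection $U\mathfrak{f}_2 \to U\mathfrak{f}_2/I_4 = C_4$ followed by the ``highest $A$-ending term'' is injective on the ideal $(A)$ at the Lie level. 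This can be seen by noting that the associated graded of $U\mathfrak{f}_2$ with respect to the filtration ``number of trailing $B$'s'' splits off $C_4$ cleanly, and a Lie element in $(A)$ has leading term (fewest trailing $B$'s) a nonzero Lie element in the free Lie algebra on $\{\mathrm{ad}_B^k A\}$ reduced mod $B$-multiples, which is nonzero by freeness. I would spell this out via Proposition \ref{prop:l-f-ideal}, which already tells us $\ker(\pi|_{U\mathfrak{f}_2}) = I_4$, so it suffices to observe $\mathfrak{f}(\mathrm{ad}_B^k A)$ injects into $\mathfrak{f}_2/(\mathfrak{f}_2\cap I_4)$, and the latter is a Lie algebra generated by the images of $\mathrm{ad}_B^k A$ with no relations forced, giving the free Lie algebra as claimed.
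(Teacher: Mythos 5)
Your proposal is correct and takes essentially the same route as the paper: both arguments rest on the decomposition $\mathfrak{f}_2=\mathbb{C}X_{\beta\omega}\oplus\mathfrak{f}(\mathrm{ad}_{X_{\beta\omega}}^k X_{1\beta};k\geq 0)$ (Lazard elimination / Shirshov--Witt) together with the fact that the second summand meets $(U\mathfrak{f}_2)X_{\beta\omega}$ trivially, plus the observation that $s(\mathfrak{p}_3)=0$. The ``main obstacle'' you flag (that no nonzero Lie element of the ideal generated by $X_{1\beta}$ lies in $(U\mathfrak{f}_2)X_{\beta\omega}$) is closed in the paper in one line by the PBW fact $\mathfrak{g}\cap(U\mathfrak{g})\mathfrak{h}=\mathfrak{h}$ for a Lie subalgebra $\mathfrak{h}\subset\mathfrak{g}$, applied with $\mathfrak{h}=\mathbb{C}X_{\beta\omega}$; this is the clean substitute for the trailing-letter filtration you sketch, and avoids the slightly circular ``no relations forced'' step.
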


\begin{proof}
    Combining \eqref{eq:plm} and \eqref{eq:I4}, we get 
        \begin{equation*}
            \mathfrak{pl}_4 = \mathfrak{f}_2/(\mathfrak{f}_2 \cap (U\mathfrak{f}_2) X_{\beta \omega}).
        \end{equation*}
It follows from the PBW theorem that for a Lie algebra $\mathfrak{g}$ and a Lie subalgebra $\mathfrak{h} \subset \mathfrak{g},$ 
        \begin{equation*}
            \mathfrak{g} \cap (U\mathfrak{g}) \mathfrak{h} = \mathfrak{h},
        \end{equation*}
hence we conclude that,
\begin{equation*}
        \mathfrak{k}_4 =\mathbb{C} X_{\beta \omega}
\end{equation*}
and since $s(\mathfrak{p}_3)=0$,
\begin{equation*}
    \mathfrak{l}_4 = \mathfrak{k}_4.
\end{equation*}
Moreover,
\begin{equation*}
     \mathfrak{pl}_4 = \mathfrak{f}_2/\mathbb{C} X_{\beta \omega}
\cong \mathfrak{f}({\rm ad}_{X_{\beta \omega}}^k X_{1 \beta}; k=0, 1, \dots),
\end{equation*}
where $\mathfrak{f}({\rm ad}_{X_{\beta \omega}}^k X_{1 \beta}; k=0, 1, \dots)$ is a free Lie algebra with an infinite number of generators 
${\rm ad}_{X_{\beta \omega}}^k X_{1 \beta}$. The last equivalence follows from the Shirshov-Witt theorem; see \cite{reutenauer_free_2003}, Section $2.2$.
\end{proof}

\subsection{Computation of $\mathfrak{k}_5$ and $\mathfrak{pl}_5$}
In what follows, we use the shorthand notation $x = X_{1 \beta}, y = X_{2 \beta}, z= X_{\beta \omega}$ for the three generators of $\mathfrak{f}_3$. Denote by $[x]$ and $[y]$ the Lie ideals generated by $x$ and $y$, respectively, and by $\lambda :=\bigl[ [x], [y] \bigr]$ the Lie ideal spanned by Lie brackets of elements of $[x]$ and $[y]$. We will show:
\begin{Th}       \label{thm: m=5}
If $m=5$, then
\begin{equation*}
    \mathfrak{l}_5 =\mathbb{C}z \oplus [\lambda,\lambda], \quad \mathfrak{k}_5 =\mathbb{C}z \oplus [\lambda,\lambda] \oplus s(\mathfrak{p}_4)
\end{equation*}
and
\begin{equation*}
    \mathfrak{pl}_5 \cong f_2'(x,z) \oplus f_2'(y,z) \oplus \lamlam.
\end{equation*}
\end{Th}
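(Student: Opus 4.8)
The plan is to reduce the whole theorem to the single identity $\mathfrak l_5 = \mathbb C z \oplus [\lambda,\lambda]$. Granting this, the formula $\mathfrak k_5 = \mathbb C z\oplus[\lambda,\lambda]\oplus s(\mathfrak p_4)$ is immediate from the relation $\mathfrak k_m = \mathfrak l_m\oplus s(\mathfrak p_{m-1})$ recorded at the start of the section, and the description of $\mathfrak{pl}_5$ follows from the graded vector space decomposition
\[
\mathfrak f_3 = \mathbb C z \ \oplus\ f_2'(x,z)\ \oplus\ f_2'(y,z)\ \oplus\ \lambda,
\]
where $f_2'(x,z)$ is the ideal generated by $x=X_{1\beta}$ in the free Lie algebra on $x,z$, i.e.\ the free Lie algebra on $\{\ad_z^k x\}_{k\ge 0}$ appearing in Proposition \ref{prop: m=4} (and similarly $f_2'(y,z)$). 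This decomposition is read off from the $(\#x,\#y,\#z)$-multigrading of $\mathfrak f_3$: choosing a multihomogeneous Hall basis, $\lambda$ is spanned by the basis elements involving both $x$ and $y$, $f_2'(x,z)$ by those involving $x$ but not $y$, $f_2'(y,z)$ by those involving $y$ but not $x$, and $z$ is the remaining generator; the identifications with $f_2'(\cdot,\cdot)$ are Lazard elimination together with Shirshov--Witt. Since $[\lambda,\lambda]\subseteq\lambda$, dividing $\mathfrak f_3$ by $\mathfrak l_5=\mathbb C z\oplus[\lambda,\lambda]$ then gives $\mathfrak{pl}_5=\mathfrak f_3/\mathfrak l_5\cong f_2'(x,z)\oplus f_2'(y,z)\oplus\lambda/[\lambda,\lambda]$ as graded vector spaces.

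For the inclusion $\mathbb C z\oplus[\lambda,\lambda]\subseteq\mathfrak l_5$, set $\mathfrak h:=\mathbb C z\oplus[\lambda,\lambda]$; this is a Lie subalgebra of $\mathfrak f_3$ (it equals $\mathbb C z\ltimes[\lambda,\lambda]$, as $[\lambda,\lambda]$ is an ideal of $\mathfrak f_3$). I claim $(U\mathfrak f_3)\mathfrak h\subseteq I_5$: the summand $(U\mathfrak f_3)z$ is visibly in $I_5$, and $\lambda\subseteq(X_{1\beta})\cap(X_{2\beta})$ inside $U\mathfrak f_3$, so $\lambda\cdot\lambda\subseteq(X_{1\beta})(X_{2\beta})$ and a fortiori $[\lambda,\lambda]\subseteq(X_{1\beta})(X_{2\beta})\subseteq I_5$; since $(X_{1\beta})(X_{2\beta})$ is a left ideal this gives $(U\mathfrak f_3)[\lambda,\lambda]\subseteq(X_{1\beta})(X_{2\beta})$. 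The PBW identity $\mathfrak g\cap(U\mathfrak g)\mathfrak h=\mathfrak h$ (already used for $m=4$) then yields $\mathfrak h=\mathfrak f_3\cap(U\mathfrak f_3)\mathfrak h\subseteq\mathfrak f_3\cap I_5=\mathfrak l_5$.

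For the reverse inclusion, observe that $I_5$ is spanned by monomials, hence multihomogeneous, so it suffices to prove $\bigl(f_2'(x,z)\oplus f_2'(y,z)\oplus\lambda^c\bigr)\cap I_5=0$ for a graded complement $\lambda^c$ of $[\lambda,\lambda]$ in $\lambda$; and since these three summands occupy pairwise disjoint sets of $(\#x,\#y)$-bidegrees while $I_5$ respects the bidegree, this splits into three independent claims. The claims $f_2'(x,z)\cap I_5=0$ and $f_2'(y,z)\cap I_5=0$ are easy: intersecting the monomial description of $I_5$ with $U\mathfrak f(x,z)$ gives $I_5\cap U\mathfrak f(x,z)=(U\mathfrak f(x,z))z$, hence $\mathfrak f(x,z)\cap I_5=\mathbb C z$ by the PBW identity, and $f_2'(x,z)\cap\mathbb C z=0$ for bidegree reasons (and symmetrically for $y$). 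The remaining claim, $\lambda\cap I_5=[\lambda,\lambda]$ — equivalently, injectivity of $\lambda/[\lambda,\lambda]\hookrightarrow\mathcal L_5^*=U\mathfrak f_3/I_5$ — is the heart of the matter and the main obstacle.

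I would prove it by a Hilbert-series argument. Because $[\lambda,\lambda]\subseteq\ker(\lambda\to\mathcal L_5^*)$, the image of $\lambda$ in $\mathcal L_5^*$ is a quotient of $\lambda/[\lambda,\lambda]$, so it is enough to match graded dimensions. One side is free-Lie-algebra combinatorics: a second Lazard elimination exhibits $\lambda$ as the ideal generated by $\{w\triangleright x : w\in\{y,z\}^*,\ \#y(w)\ge 1\}$ inside $[x]=\mathfrak f\bigl(\{w\triangleright x:w\in\{y,z\}^*\}\bigr)$, hence as a free Lie algebra whose set of generators has Hilbert series $\bigl(\tfrac{t}{1-2t}-\tfrac{t}{1-t}\bigr)\big/\bigl(1-\tfrac{t}{1-t}\bigr)=\tfrac{t^2}{(1-2t)^2}$, so $\dim_t(\lambda/[\lambda,\lambda])=\tfrac{t^2}{(1-2t)^2}$ (one can cross-check this against $\dim_t\mathcal L_5^*=\bigl(\tfrac{1-t}{1-2t}\bigr)^2$, obtained by counting the monomials $w_{a,\kappa}$). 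On the other side one uses the explicit model of $\mathcal L_5^*$ from Theorem \ref{th: intro2}: $\mathcal L_5^*=\mathbb C\langle w_{a,\kappa}\rangle$ with $U\mathfrak f_3$ acting by ``prepend a letter, then reduce mod $I_5$'', where prepending $z$ to $\overline 1$ gives $0$ and prepending $X_{1\beta}$ to any word containing $X_{2\beta}$ lands in $I_5$. Running each free generator $g$ of $\lambda$ through this action produces $g\cdot\overline 1\in\mathbb C\langle w_{a,\kappa}\rangle$ with a well-defined leading monomial $W(g)$; the assignment $g\mapsto W(g)$ is injective, and $W(g)$ is not produced by $[\lambda,\lambda]\cdot\overline 1$ nor by generators with smaller leading monomial, so a triangularity argument makes the $g\cdot\overline 1$ linearly independent. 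Hence $\dim_t\,\mathrm{im}(\lambda\to\mathcal L_5^*)\ge\tfrac{t^2}{(1-2t)^2}$, forcing equality; together with the two easy claims this gives $\mathfrak l_5=\mathbb C z\oplus[\lambda,\lambda]$, and the theorem follows. The genuinely delicate step, where essentially all the combinatorial bookkeeping concentrates, is this last triangularity claim: pinning down the leading monomial of $g\cdot\overline 1$ and checking that $g\mapsto W(g)$ is injective across all free generators of $\lambda$.
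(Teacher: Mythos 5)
Your overall architecture matches the paper's: reduce everything to $\mathfrak l_5=\mathbb C z\oplus[\lambda,\lambda]$, get the easy inclusion from $[\lambda,\lambda]\subset(x)(y)\subset I_5$ together with $z\in I_5$, split the reverse inclusion by $(x,y)$-bidegree, and settle the heart of the matter ($\lambda\cap I_5=[\lambda,\lambda]$) by comparing graded dimensions. Your computation of the Poincar\'e series of $\lambda/[\lambda,\lambda]$ by a double Lazard elimination is a legitimate alternative to the paper's argument, which instead extracts it from the long exact sequence in Lie algebra homology associated to $0\to\mathfrak J\to U\mathfrak f_3\to k\to 0$ (using that $U\mathfrak f_3$ is a free $U\lambda$-module); the paper's footnote to Lemma \ref{lemma:poincare-lambda} credits precisely the elimination-based route to Enriquez and Furusho, and your $t^2/(1-2t)^2$ is the specialization $s_1=s_2=t$ of the paper's $s_1s_2/\bigl((1-s_1-t)(1-s_2-t)\bigr)$. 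All of that is fine.

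The gap is exactly where you locate it, and it is a genuine one: the linear independence in $U\mathfrak f_3/I_5$ of the images $g\cdot\bar 1$ of the Lazard free generators of $\lambda$ carries the entire content of the injectivity of $\lambda/[\lambda,\lambda]\to\mathcal L_5^*$, and you supply neither the leading monomial $W(g)$, nor the order, nor the injectivity of $g\mapsto W(g)$. The generators produced by your second elimination, of the form $\ad_{u_1}\cdots\ad_{u_p}(s)$ with $u_i\in\{\ad_z^k x\}$ and $s\in\ad_{\{y,z\}^*}(x)$, are poorly adapted to reduction modulo $I_5$, and there is no reason to expect a clean triangular system from them. The paper closes this gap by running the count in the opposite direction: Proposition \ref{prop:bases} exhibits a different family of Lie elements of $\lambda$, the iterated brackets of part (iii), each of which expands modulo $I_5$ into exactly one monomial $z^{k_1}y\cdots z^{k_s}y\,z^{l_1}x\cdots z^{l_t}x$ of the monomial basis of $C_5^{\geq(1,1)}$ (after a triangular change of basis in parts (i)--(ii)). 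This gives surjectivity of $\varphi\colon\lambda/[\lambda,\lambda]\to(\mathcal L_5^*)^{\geq(1,1)}$ essentially for free, and the equality of Poincar\'e series then upgrades surjectivity to bijectivity --- the same dimension count you intend, but anchored on a spanning set whose leading-term behaviour is transparent. To repair your write-up, replace your Lazard generators by those bracketed words (or prove the single-leading-monomial property for your generators, which amounts to redoing Proposition \ref{prop:bases}).
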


In the case $m=5$, the Lie algebra $\mathfrak{f}_3$ is spanned by the generators $x,y$ and $z$, and the ideal $I_5 \subset U\mathfrak{f}_3$ takes the form $I_5 = (x)(y) + (U\mathfrak{f}_3)z$. Recall from Proposition \ref{prop:C-complement-I} that $C_5 \subset U\mathfrak{f}_3$ is a complement of $I_5$, % =(U\mathfrak{f}_3)z + (x)(y)$, 
that is, $C_5 \cong U\mathfrak{f}_3/I_5$.
It is useful to note that $\mathfrak{f}_3(x,y,z)$ admits a bigrading in the $x$- and $y$-degrees; it decomposes into the homogeneous parts of degrees $(0,0), (1,0), (0,1)$ and $\geq(1,1)$ as follows.
\begin{Prop}
\label{prop:decomp-f3}
\begin{equation*}
    \mathfrak{f}_3(x,y,z) \cong \mathbb{C}z \oplus \mathfrak{f}^{'}_2(x,z) \oplus \mathfrak{f}^{'}_2(y,z) \oplus \lambda,
\end{equation*}
where 
\begin{equation*}
\mathfrak{f}^{'}_2(x,z) \cong \mathfrak{f}(\ad^k_z(x); k\geq0), \quad
\mathfrak{f}^{'}_2(y,z) \cong \mathfrak{f}(\ad^k_z(y); k\geq0).
\end{equation*}
%and $\lambda :=\bigl[ [x], [y] \bigr]$ is the Lie ideal spanned by Lie brackets of elements of $[x]$ and $[y]$, where $[x]$ and $[y]$ are the Lie ideals spanned by $x$ and $y$, respectively.
\end{Prop}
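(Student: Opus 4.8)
The plan is to establish the decomposition of $\mathfrak{f}_3(x,y,z)$ in Proposition \ref{prop:decomp-f3} by exploiting the bigrading by $x$-degree and $y$-degree. First I would observe that since $x,y,z$ are free generators, $\mathfrak{f}_3$ is bigraded, with $\mathbb{C}z$ the part of bidegree $(0,0)$ in positive total degree (together with the vanishing of negative pieces), the part of bidegree $(k,0)$ for $k\geq 1$ forming a sub-Lie-algebra, the part of bidegree $(0,k)$ for $k\geq 1$ forming another, and the sum of all parts of bidegree $\geq(1,1)$ being exactly $\lambda = [[x],[y]]$. The first task is to identify the bidegree $(\ast,0)$ part: in the quotient $\mathfrak{f}_3/[y]$ (killing $y$ and everything it generates) one gets the free Lie algebra $\mathfrak{f}_2(x,z)$, and its part of positive $x$-degree is the derived-type subalgebra $\mathfrak{f}'_2(x,z)$; by the Shirshov--Witt theorem (as already invoked in the proof of Proposition \ref{prop: m=4}) this is free on the generators $\ad_z^k(x)$, $k\geq0$, since these are precisely a free generating set for the ideal $[x]\subset\mathfrak{f}_2(x,z)$. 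Symmetrically for $\mathfrak{f}'_2(y,z)$.

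The key structural step is to show that $\lambda$, defined as the Lie ideal spanned by brackets of elements of $[x]$ with elements of $[y]$, coincides with the span of all homogeneous components of $\mathfrak{f}_3$ of bidegree $(p,q)$ with $p\geq1$ and $q\geq1$. One inclusion is clear: any such bracket has $x$-degree $\geq1$ and $y$-degree $\geq1$, and taking further brackets with arbitrary elements of $\mathfrak{f}_3$ only raises degrees, so $\lambda$ lies in the $\geq(1,1)$ part. For the reverse inclusion I would argue that any Lie monomial involving at least one $x$ and at least one $y$ can, using the Jacobi identity, be rewritten as a sum of iterated brackets in which an occurrence of $x$ and an occurrence of $y$ appear nested together — more precisely, induct on total degree and peel off the generators; a Lie word $w$ of bidegree $\geq(1,1)$ is a bracket $[u_1,u_2]$ where the $x$'s and $y$'s are distributed among $u_1$ and $u_2$, and either one of $u_i$ already has bidegree $\geq(1,1)$ (apply induction), or one has only $x$'s (hence lies in $[x]$) and the other only $y$'s (hence in $[y]$), placing $w$ directly in $\lambda$; the intermediate generator $z$ causes no trouble since $\lambda$ is an ideal.

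Once these identifications are in place, the decomposition $\mathfrak{f}_3 = \mathbb{C}z \oplus \mathfrak{f}'_2(x,z)\oplus\mathfrak{f}'_2(y,z)\oplus\lambda$ follows as a direct sum of bigraded pieces: $\mathbb{C}z$ is bidegree $(0,0)$, $\mathfrak{f}'_2(x,z)$ collects bidegrees $(\geq1,0)$, $\mathfrak{f}'_2(y,z)$ collects $(0,\geq1)$, and $\lambda$ collects $(\geq1,\geq1)$, and every homogeneous component of $\mathfrak{f}_3$ lands in exactly one of these four ranges. One must check that $\mathfrak{f}'_2(x,z)$, a priori defined inside $\mathfrak{f}_2(x,z)=\mathfrak{f}_3/[y]$, embeds into $\mathfrak{f}_3$ as the bidegree-$(\geq1,0)$ part — but this is immediate since the projection $\mathfrak{f}_3\to\mathfrak{f}_2(x,z)$ is an isomorphism in $y$-degree zero. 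I expect the main obstacle to be the second paragraph: cleanly proving that $\lambda$ exhausts the entire $\geq(1,1)$ part rather than just a proper ideal inside it, i.e.\ that the naive bracket description of $\lambda$ is not missing any homogeneous components. The induction must be set up carefully so that the base cases (bidegree $(1,1)$, with an arbitrary number of $z$'s interspersed) and the Jacobi rearrangement are handled uniformly; everything else is bookkeeping with the bigrading and two applications of Shirshov--Witt.
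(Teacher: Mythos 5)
Your argument is correct and is exactly the route the paper intends: the paper states this proposition without a written proof, justifying it only by the preceding remark that $\mathfrak{f}_3$ is bigraded by $x$- and $y$-degree with the four summands being the $(0,0)$, $(\geq 1,0)$, $(0,\geq 1)$ and $\geq(1,1)$ pieces, and your write-up supplies precisely the missing details (Lazard elimination/Shirshov--Witt for the $\mathfrak{f}'_2$ pieces and the induction showing $\lambda=[[x],[y]]$ exhausts the $\geq(1,1)$ part). No genuinely different ideas are involved.
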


Since the map $\pi$ is injective in the $(x,y)$-degrees $(1,0)$ and $(0,1)$, i.e. on $\mathfrak{f}^{'}_2(x,z)$ and $\mathfrak{f}^{'}_2(y,z)$, it remains to study the image of $\lambda$. Note that
\begin{equation*}
\begin{split}
[\lambda, \lambda] = \bigr[ \bigl[ [x], [y] \bigr], \bigl[ [x], [y] \bigr] \bigl] & \subset (x)(y), \\
\mathbb{C}z \oplus [\lambda, \lambda] & \subset \ker(\pi),
\end{split}
\end{equation*}
hence the map $\pi$ factors through 
\[ \begin{tikzcd}
\lambda \arrow{r}{\pi} \arrow[swap]{d} & (\mathcal{L}_5^{*})^{\geq (1,1)} \\%
\lambda/[\lambda,\lambda] \arrow[swap]{ur}{\varphi}
\end{tikzcd},
\]
% \[ \begin{tikzcd}
% \lambda \arrow[twoheadrightarrow]{r}{\pi} \arrow[swap]{d} & (\mathcal{L}_5^{*})^{\geq (1,1)} \\%
% \lambda/[\lambda,\lambda] \arrow[swap, twoheadrightarrow]{ur}{\varphi}
% \end{tikzcd},
% \]
% \begin{equation*}
%     \lambda/ [\lambda,\lambda] \to (\mathcal{L}_5^{*})^{\deg \geq (1,1)},
% \end{equation*}
where $(\mathcal{L}_5^{*})^{\geq (1,1)}$ denotes the graded subspace of $\mathcal{L}_5^{*}$ whose degrees in $x$ and $y$ are non-zero. 

\begin{Th}
\label{th:isomorphism}
The map $\varphi$ is an isomorphism.
\end{Th}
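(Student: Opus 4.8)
The plan is to show that $\varphi\colon \lambda/[\lambda,\lambda]\to(\mathcal{L}_5^*)^{\geq(1,1)}$ is an isomorphism by exhibiting compatible bases on both sides, or more precisely by computing both spaces explicitly and checking that $\varphi$ matches them up. On the source side, $\lambda = [[x],[y]]$ is a Lie ideal in $\mathfrak{f}_3$, and $\lambda/[\lambda,\lambda]$ is an abelian Lie algebra, hence just a graded vector space; I would first find a basis for it. The key observation is that $\lambda$ is spanned by brackets $[\ad_{w_1}\cdots\ad_{w_r}(x),\,\ad_{w_1'}\cdots\ad_{w_s'}(y)]$ with $w_i,w_j'\in\{x,y,z\}$, and modulo $[\lambda,\lambda]$ the iterated brackets simplify: brackets of two elements each of which already lies in $\lambda$ die, so a basis of $\lamlam$ should be indexed by words in which $x$ and $y$ each occur exactly once (all other letters being $z$'s, distributed around and between them) — matching the $C_5$ picture where the complement of $I_5$ in bidegree $(1,1)$ consists of words of the form $z^{a}\,y\,z^{b}\,x\,z^{c}$ (recall the ordering constraint in $I_5$ puts $x$ to the right of $y$). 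This already suggests the dimension count will come out right.

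Next I would identify $(\mathcal{L}_5^*)^{\geq(1,1)}$. By Proposition~\ref{prop:l-f-ideal}, $\mathcal{L}_5^*\cong U\mathfrak{f}_3/I_5\cong C_5$ as a $U\mathfrak{f}_3$-module, so its bidegree-exactly-$(1,1)$ part has basis the monomials $z^{c}\,x\,z^{b}\,y\,z^{a}$ for $a,b,c\geq 0$ (reading right-to-left as in $w_{a,\kappa}$, so that in the module the constraint is "$x$ never sits to the left of $y$"), and higher $(x,y)$-bidegrees do not occur at all since $C_5$ by construction contains no word with two letters $x_{i\beta}$ one of which is left of another with larger index; here $n=2$ so any word with two such letters and $x$ left of $y$ is in $I_5$, leaving only bidegree $(1,1)$ and $(1,0),(0,1),(0,0)$. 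Thus $(\mathcal{L}_5^*)^{\geq(1,1)} = (\mathcal{L}_5^*)^{(1,1)}$ is spanned by exactly these monomials. The map $\varphi$ is induced by $\pi$, i.e. by projection $U\mathfrak{f}_3\to C_5$ along $I_5$, so concretely $\varphi$ sends the class of a basis Lie element of $\lambda$ to the $C_5$-component of its expansion; I would check that the leading term of the bracket $[\ad_z^c(x),\ad_z^b\ad_x^{?}\cdots]$ — suitably normalised — is precisely $\pm z^{c}xz^{b}yz^{a}$ plus terms that either lie in $I_5$ or in $[\lambda,\lambda]$, giving a unitriangular change of basis.

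The cleanest route, and the one I would actually write, is: (1) use Proposition~\ref{prop:decomp-f3} to pin down the bidegree-$(1,1)$ part $\lambda^{(1,1)}$ of $\lambda$, show it is spanned by $\{\ad_z^c(x),\ad_z^b(y)\}$-brackets, hence has basis $\{[\,\ad_z^{b+?}(y),\ad_z^{c}(x)\,]\}$ — in fact $\lambda^{(1,1)}$ is a free module picture that identifies it with $\mathbb{C}\langle z^cxz^by z^a\rangle$ after projecting; (2) observe $[\lambda,\lambda]\cap(\text{bidegree }(1,1)) = 0$ since $[\lambda,\lambda]$ lives in bidegrees $\geq(2,2)$, so $\varphi$ restricted to bidegree $(1,1)$ is just $\pi|_{\lambda^{(1,1)}}$ and $(\lamlam)^{(1,1)}=\lambda^{(1,1)}$; (3) show $\pi\colon\lambda^{(1,1)}\to(\mathcal{L}_5^*)^{(1,1)}$ is bijective by the unitriangularity argument above; (4) in bidegrees $\geq(2,1)$ and $\geq(1,2)$ both $\lamlam$ and $(\mathcal{L}_5^*)^{\geq(1,1)}$ vanish (the latter because $C_5$ has no such words for $n=2$; the former because every element of $\lambda$ of $x$-degree $\geq 2$ or $y$-degree $\geq 2$ is a sum of brackets of lower elements of $\lambda$, i.e. lies in $[\lambda,\lambda]$), so $\varphi$ is trivially an isomorphism there. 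The main obstacle is step (3)/(4): proving that $\lambda/[\lambda,\lambda]$ is concentrated in bidegree $(1,1)$, which amounts to a structural fact about the free Lie algebra — that the ideal generated by all brackets $[[x],[y]]$ is, modulo its own commutator, spanned by the "single $x$, single $y$" brackets. I expect to prove this by an explicit spanning argument: any element of $\lambda$ is an $\R$-linear combination of $\ad$-monomials $\ad_{u_1}\cdots\ad_{u_k}([x',y'])$ with $x'\in[x]$, $y'\in[y]$; repeatedly using the Jacobi identity to push all of one variable's occurrences "inside", one shows an element with $\geq 2$ copies of $x$ equals $\pm$ a bracket of two elements each having $\geq 1$ copy of $x$ and hence of $\lambda$, i.e. lies in $[\lambda,\lambda]$. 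Once that combinatorial lemma is in hand, the isomorphism follows by comparing the two bidegree-$(1,1)$ bases, which is the routine part.
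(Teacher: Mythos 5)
Your argument rests on the claim that both $\lamlam$ and $(\mathcal{L}_5^*)^{\geq(1,1)}$ are concentrated in $(x,y)$-bidegree exactly $(1,1)$, and both halves of that claim are false; this is the gap, and it is fatal to the strategy rather than a fixable detail. On the target side you have misread the ideal $I_5=(x)(y)+(U\mathfrak{f}_3)z$: a monomial lies in $I_5$ only if it ends in $z$ or contains an occurrence of $x$ somewhere to the \emph{left} of an occurrence of $y$. Words such as $y^2x^2$, or more generally $z^{k_1}y\cdots z^{k_s}y\,z^{l_1}x\cdots z^{l_t}x$ with arbitrary $s,t\geq 1$, satisfy neither condition and hence survive in $C_5\cong\mathcal{L}_5^*$; these are exactly the $w_{a,\kappa}$ with $\kappa$ non-decreasing, and the length $N$ is unbounded. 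So $(\mathcal{L}_5^*)^{\geq(1,1)}$ is nonzero in every bidegree $(t,s)$ with $s,t\geq 1$: its Poincar\'e series, computed in Lemma \ref{lemma:poincare-L5}, is $s_1s_2/\bigl((1-s_1-t)(1-s_2-t)\bigr)$, which is not supported in bidegree $(1,1)$ alone. On the source side, the ``combinatorial lemma'' you propose --- that any element of $\lambda$ with at least two occurrences of $x$ lies in $[\lambda,\lambda]$ --- is also false: $[x,[x,y]]\in\lambda$ has bidegree $(2,1)$, while $[\lambda,\lambda]$ is concentrated in bidegrees $\geq(2,2)$ (each factor of a bracket in $[\lambda,\lambda]$ already contains both an $x$ and a $y$), so $[x,[x,y]]$ is nonzero in $\lamlam$. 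Your step (4) therefore fails on both sides, and with it the reduction to a single finite-type bidegree.

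The actual content of the theorem is that the two graded spaces agree in \emph{all} bidegrees $\geq(1,1)$. The paper's route is: (a) surjectivity of $\varphi$, by exhibiting for each basis monomial of $C_5^{\geq(1,1)}$ an explicit iterated Lie bracket lying in $\lambda$ whose expansion equals that monomial modulo $I_5$ (Proposition \ref{prop:bases}, part (iii)); and (b) injectivity, by computing the Poincar\'e series of $\lamlam$ via the long exact sequence in Lie algebra homology attached to $0\to\mathfrak{J}\to U\mathfrak{f}_3\to k\to 0$ regarded as a sequence of $\lambda$-modules (Lemma \ref{lemma:poincare-lambda}), and matching it against the elementary count for $C_5^{\geq(1,1)}$. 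Your unitriangularity idea is in the right spirit for the surjectivity half, but some genuinely new input --- here homological --- is needed to control the size of $\lamlam$ in the higher bidegrees you assumed away.
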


To prove this theorem, we shall first study the surjectivity of $\varphi$, then its injectivity.

\begin{Prop}
\label{prop:bases}
    The following sets are all bases of $(\mathcal{L}_5^{*})^{\geq (1,1)}$:
    \begin{enumerate}[label=(\roman*)]
        \item \label{prop:basis1} $ \mathbb{C} \langle z^{k_1} y \ldots z^{k_s} y z^{l_1} x \ldots z^{l_t} x \rangle^{\deg\geq(1,1)} $;
        \item \label{prop:basis2} $ \mathbb{C} \langle ad_z^{k_1} (y) \ldots ad_z^{k_s} (y) ad_z^{l_1} (x) \ldots  ad_z^{l_t} (x) \rangle$;
        \item \label{prop:basis3} $\mathbb{C} \langle [[\ldots[[ad_z^{k_1} (y),[ \ldots ,[ad_z^{k_{s-1}}(y), [ad_z^{k_s} (y), ad_z^{l_1} (x)] ]]],ad_z^{l_2}(x)],\ldots], ad_z^{l_t} (x) ] \rangle.$
        %$ \mathbb{C} \langle {\color{cyan}[}{\color{purple}[}\ldots{\color{orange}[}{\color{magenta}[}ad_z^{k_1} (y),{\color{green}[} \ldots ,{\color{blue} [}ad_z^{k_{s-1}}(y), [ad_z^{k_s} (y), ad_z^{l_1} (x)] {\color{blue}]}{\color{green}]}{\color{magenta}]},ad_z^{l_2}(x){\color{orange}]},\ldots{\color{purple}]}, ad_z^{l_t} (x) {\color{cyan}]} \rangle$.
    \end{enumerate}
\end{Prop}

\begin{proof} 
    \begin{enumerate}
        \item Clear from the form of the complement; see Proposition \ref{prop:C-complement-I}.
        \item Consider an element of \ref{prop:basis2}; it has the form 
            \begin{equation*}
               w= ad_z^{k_1} (y) \ldots ad_z^{k_s} (y) ad_z^{l_1} (x) \ldots  ad_z^{l_t} (x),
            \end{equation*}
        where are least one $k_i$ and $l_j$ are greater than $0$. Developing this expression, we get a sum of monomials in $x,y$ and $z$, to each of which we associate an exponent vector as follows. Reading the monomial from left to right, record the number of $z$'s which appear in front of each occurrence of $y$ or $x$ (we ignore the $z$'s appearing at the end of the monomial). We equip the set of exponent vectors with the lexicographical order, i.e. given two sequences $(a_1, \ldots, a_p)$ and $(b_1, \ldots, b_q)$, $(a_1, \ldots, a_p) > (b_1, \ldots, b_q)$ if and only if either $a_i > b_i$ for some index $i$ (and $a_j = b_j$ for all $j<i$), or $p>q$. It follows from this construction that the largest exponent vector associated to the element $w$ is 
            \begin{equation*}
                (k_1, \ldots, k_s,l_1, \ldots, l_t).
            \end{equation*}
        Note that the elements of the basis \ref{prop:basis1} can also be indexed by exponent vectors, defined in the same way; for instance, the element $z^{k_1}y \ldots z^{k_s} y z^{l_1} x \ldots z^{l_t} x$ corresponds to the vector $(k_1, \ldots, k_s,l_1, \ldots, l_t)$.
        Consider the map
            \begin{equation*}
            \begin{split}
                \phi : ad_z^{k_1} (y) \ldots ad_z^{k_s} (y) ad_z^{l_1} (x)\ldots ad_z^{l_t} (x) \mapsto z^{k_1} y \ldots z^{k_s} & y z^{l_1} x \ldots z^{l_t} x \hspace{1em} \\ & + \text{ smaller terms}.
            \end{split}
            \end{equation*}
            It translates to a transformation matrix between the bases \ref{prop:basis1} and \ref{prop:basis2}, which is lower triangular and has only $1$'s on its diagonal, so it is invertible.
       
        \item Consider an element of \ref{prop:basis2}; it has the form 
            \begin{equation*}
               w= ad_z^{k_1} (y) \ldots ad_z^{k_s} (y) ad_z^{l_1} (x) \ldots  ad_z^{l_t} (x).
            \end{equation*}
        We define a bracketing on the elements of \ref{prop:basis2} as follows. First, bracket together the last $y$-term and the first $x$-term, $[ad_z^{k_s}(y), ad_z^{l_1}(x)]$. Then, bracket $ad_z^{k_{s-1}}(y)$ with $[ad_z^{k_s}(y), ad_z^{l_1}(x)]$, and continue inductively towards the left until reaching $ad_z^{k_1}(y)$. Finally, bracket the resulting expression with
        %$[ad_z^{k_1}(y), \ldots, [ad_z^{k_s}(y), ad_z^{l_1}(x)]]$ with 
        $ad_z^{l_2}(x)$, and continue inductively towards the right until reaching $ad_z^{l_t}(x)$. This procedure completely determines a corresponding element of \ref{prop:basis3}, which thus has the form
            \begin{equation*}
                w = [[\ldots[[ad_z^{k_1} (y),[ \ldots ,[ad_z^{k_{s-1}}(y), [ad_z^{k_s} (y), ad_z^{l_1} (x)] ]]],ad_z^{l_2}(x)],\ldots], ad_z^{l_t} (x) ],
            \end{equation*}
        By construction, the expansion of $w$ is the sum of monomials,
            \begin{equation*}
                ad_z^{k_1} (y) \ldots ad_z^{k_s} (y) ad_z^{l_1} (x)\ldots ad_z^{l_t} (x)
            \end{equation*}
        together with additional terms in which at least one occurrence of $x$ appears to the left of one of $y$, i.e. elements of the ideal $I_5$. 
        It follows that modulo $I_5$, the families \ref{prop:basis2} and \ref{prop:basis3} generate the same space.
    \end{enumerate}
\end{proof}

Let us now compute the Poincaré series of the domain and codomain of $\varphi$, $\lambda/[\lambda,\lambda]$ and $(\mathcal{L}_5^{*})^{\geq (1,1)}$.
We will denote by $s_1, s_2$ and $t$ the variables that track the number of $x,y$ and $z$'s, respectively.

\begin{Lemma}
\label{lemma:poincare-L5}
    The Poincaré series of $(\mathcal{L}_5^{*})^{\geq (1,1)}$ is given by
    \begin{equation*}
        P(s_1,s_2,t)=\frac{s_1 s_2}{(1-s_1-t)(1-s_2-t)}.
    \end{equation*}
\end{Lemma}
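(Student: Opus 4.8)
The plan is to count the elements of the monomial basis \ref{prop:basis1} from Proposition \ref{prop:bases} directly. By that proposition, $(\mathcal{L}_5^{*})^{\geq(1,1)}$ has a basis consisting of the words
$$
z^{k_1} y\, z^{k_2} y \cdots z^{k_p} y\, z^{l_1} x\, z^{l_2} x \cdots z^{l_q} x,
\qquad k_i, l_j \in \Z_{\geq 0},
$$
subject only to the constraint $\deg \geq (1,1)$. First I would observe that, since every such word has all its $y$'s to the left of all its $x$'s, its $(x,y)$-degree is exactly $(q,p)$; hence the condition $\deg\geq(1,1)$ is equivalent to $p\geq 1$ and $q\geq 1$, and the number of $z$'s is $\sum_i k_i + \sum_j l_j$. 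Tracking $x$, $y$, $z$ by $s_1, s_2, t$ respectively, each basis word thus contributes the monomial $s_1^{q} s_2^{p} t^{\sum k_i + \sum l_j}$ to the Poincaré series.

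Next I would note that each such word factors \emph{uniquely} as a ``$y$-part'' $z^{k_1}y\cdots z^{k_p}y$ followed by an ``$x$-part'' $z^{l_1}x\cdots z^{l_q}x$ (the boundary is determined by the first occurrence of $x$), so the generating function is the product of the generating functions of the two parts. The $y$-part is a nonempty concatenation of blocks of the form $z^{k}y$; a single block has generating function $\sum_{k\geq 0} t^{k} s_2 = \frac{s_2}{1-t}$, and a nonempty sequence of such blocks has generating function
$$
\sum_{p\geq 1}\left(\frac{s_2}{1-t}\right)^{p} = \frac{s_2/(1-t)}{1 - s_2/(1-t)} = \frac{s_2}{1-s_2-t}.
$$
By the identical computation with $x$ in place of $y$, the $x$-part contributes $\frac{s_1}{1-s_1-t}$. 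Multiplying the two factors yields
$$
P(s_1,s_2,t) = \frac{s_1 s_2}{(1-s_1-t)(1-s_2-t)},
$$
as claimed.

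There is no serious obstacle here: the argument is a routine generating-function count once the monomial basis \ref{prop:basis1} is in hand. The only points requiring a word of care are the identification of the $\deg\geq(1,1)$ subspace with the conditions $p,q\geq 1$ (which uses that in these particular words the $y$'s all precede the $x$'s, so no cancellation or re-ordering affects the degree), and the unique factorization of a basis word into its $y$-part and $x$-part, which legitimizes writing $P$ as a product.
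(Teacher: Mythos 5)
Your proposal is correct and follows essentially the same route as the paper: both count the monomial basis \ref{prop:basis1} by factoring each word into its $y$-part and $x$-part, computing the geometric series $\sum_{p\geq 1}(s_2/(1-t))^p = s_2/(1-s_2-t)$ for each part, and multiplying. Your added remarks on the unique factorization and the identification of the $\deg\geq(1,1)$ condition with $p,q\geq 1$ are sound and make the argument slightly more explicit than the paper's.
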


\begin{proof}
     Recall from Proposition \ref{prop:bases} that an element of the basis of $(\mathcal{L}_5^{*})^{\geq (1,1)}$ has the form 
        \begin{equation*}
            z^{k_1} y \ldots z^{k_a} y z^{l_1} x \ldots z^{l_b} x,
        \end{equation*}
    with at least one occurrence of $x$ and $y$. 
    First, note that $z^ky$ is the only word containing a single $y$ and $k$ $z$'s; it has generating function 
        \begin{equation*}
            P(s_2,t) = \sum_{k\geq 0} s_2 t^k = \frac{s_2}{1-t}.
        \end{equation*}
    It follows that $z^{k_1} y \ldots z^{k_a} y$, with $a \geq1$ occurrences of $y$, has the generating function
        \begin{equation*}
            P(s_2,t) = \sum_{a\geq \bf{1}} \left( \frac{s_2}{1-t} \right)^{a} = \frac{s_2}{1-s_2-t}.
        \end{equation*}
    Finally, combining these calculations with the analogous ones for the variable $x$, we conclude that the Poincaré series of $(\mathcal{L}_5^{*})^{\geq (1,1)}$ is
        \begin{equation*}
            P(s_1,s_2,t)=\frac{s_1 s_2}{(1-s_1-t)(1-s_2-t)}.
        \end{equation*}

\end{proof}

To compute the Poincaré series of $\lambda/[\lambda,\lambda]$, we use some elementary homological algebra; for the necessary background, we refer to \cite{weibel_introduction_1994}. Consider the following short exact sequence of left $\lambda$-modules:
\begin{equation*}
    0 \to \mathfrak{I} \to U\mathfrak{f}_3 \overset{\varepsilon}{\to} k \to 0,
\end{equation*}
where $\varepsilon : U\mathfrak{f}_3\to k$ is the \textit{augmentation} and %, i.e. the unique $k$-algebra homeomorphism sending $i(\mathfrak{f}_3)$ to zero,
$\mathfrak{J}$ is the \textit{augmentation ideal}. %, the two-sided ideal of $U\mathfrak{f}_3$ generated by $i(\mathfrak{f}_3)$ as a left ideal.
This short exact sequence induces a long exact sequence of Lie algebra homology (see \cite{weibel_introduction_1994}, Theorem 1.3.1):
\begin{equation}
\label{eq:long-exact-seq}
\begin{split}
    \ldots \to H_2(\lambda,k) \to H_1(\lambda,\mathfrak{J}) \to & H_1(\lambda,U\mathfrak{f}_3) \to H_1(\lambda,k) \\
    \to & H_0(\lambda,\mathfrak{J}) \to H_0(\lambda,U\mathfrak{f}_3) \to H_0(\lambda,k) \to 0.
\end{split}
\end{equation}
Since $\uf$ is a free $\lambda$-module, $H_i(\lambda,\uf)=0$ for $i\geq 1$, and \eqref{eq:long-exact-seq} restricts to the following short exact sequence:
\begin{equation*}
    0 \to H_1(\lambda,k) \to H_0(\lambda,\mathfrak{J}) \to H_0(\lambda,U\mathfrak{f}_3) \to H_0(\lambda,k) \to 0,
\end{equation*}
i.e.
\begin{equation}
\label{eq:short-seq-homlgy}
    0 \to \lambda/[\lambda,\lambda] \to U(f_3/\lambda)\otimes (kx \oplus ky \oplus kz) \to  U(\mathfrak{f}_3/\lambda)\to k \to 0.
\end{equation}

% We use:
% \begin{itemize}
%     \item $H_1(\lambda,k)=\lambda/[\lambda,\lambda]$
%     \item $H_0(\lambda,k)=k$
%     \item $H_0(\lambda,\mathfrak{I})=U(f_3/\lambda)\otimes (kx \oplus ky \oplus kz)$
%     \item $H_0(\lambda, \uf) = U(\mathfrak{f}_3/\lambda)$.
% \end{itemize}

\begin{Lemma}
\label{lemma:poincare-lambda}
    The Poincaré series of $\lambda/[\lambda,\lambda]$ is given by
    \begin{equation*}
        P(s_1,s_2,t)=\frac{s_1 s_2}{(1-s_1-t)(1-s_2-t)}.
    \end{equation*}
\end{Lemma}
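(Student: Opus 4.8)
The plan is to extract the Poincaré series of $\lamlam$ from the four-term exact sequence \eqref{eq:short-seq-homlgy} by the additivity of Poincaré series on exact complexes of graded vector spaces. All four terms of \eqref{eq:short-seq-homlgy} carry the $(x,y,z)$-multigrading and have finite-dimensional graded pieces --- because $\mathfrak{f}_3$ is finitely generated, $\uf$, its augmentation ideal $\mathfrak{J}$, and the Lie homology groups $H_\bullet(\lambda,-)$ are all of finite type in each multidegree --- so the alternating sum of their Poincaré series vanishes. The three one-dimensional summands $kx,ky,kz$ contribute the monomials $s_1,s_2,t$ and the summand $k$ contributes $1$, so, writing $Q$ for the Poincaré series of $U(\mathfrak{f}_3/\lambda)$, this identity reads
\begin{equation*}
P_{\lamlam}(s_1,s_2,t) = (s_1+s_2+t-1)\,Q + 1 .
\end{equation*}
Thus everything is reduced to computing $Q$.

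For $Q$, the key input is Proposition \ref{prop:decomp-f3}, which identifies $\mathfrak{f}_3/\lambda$ with $\mathbb{C}z\oplus\mathfrak{f}'_2(x,z)\oplus\mathfrak{f}'_2(y,z)$ as a graded vector space. By the Poincaré--Birkhoff--Witt theorem the Poincaré series of the universal enveloping algebra of a graded Lie algebra $\mathfrak g$ of finite type depends only on $\mathfrak g$ as a graded vector space --- it equals $\prod_\alpha(1-q^{\alpha})^{-\dim\mathfrak g_\alpha}$ --- so $Q$ factors as a product over these three summands:
\begin{equation*}
Q = P\bigl(U(\mathbb{C}z)\bigr)\cdot P\bigl(U\mathfrak{f}'_2(x,z)\bigr)\cdot P\bigl(U\mathfrak{f}'_2(y,z)\bigr).
\end{equation*}
Here $P(U(\mathbb{C}z))=(1-t)^{-1}$; and since $\mathfrak{f}'_2(x,z)=\mathfrak{f}(\ad^k_z(x);k\ge0)$ is a free Lie algebra whose generators $\ad^k_z(x)$ have multidegree $(1,0,k)$, its enveloping algebra is the free associative algebra on those generators, whence $P(U\mathfrak{f}'_2(x,z))=\bigl(1-\sum_{k\ge0}s_1t^k\bigr)^{-1}=(1-t)/(1-s_1-t)$, and likewise $P(U\mathfrak{f}'_2(y,z))=(1-t)/(1-s_2-t)$. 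Multiplying the three factors gives $Q=(1-t)/\bigl((1-s_1-t)(1-s_2-t)\bigr)$.

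Substituting this value of $Q$ into the displayed identity for $P_{\lamlam}$ and putting everything over the common denominator $(1-s_1-t)(1-s_2-t)$, a short computation shows the numerator equals $s_1 s_2$, which yields exactly the asserted formula $P_{\lamlam}=s_1 s_2/\bigl((1-s_1-t)(1-s_2-t)\bigr)$; note that it coincides with the Poincaré series of $(\mathcal{L}_5^{*})^{\ge(1,1)}$ from Lemma \ref{lemma:poincare-L5}, which is what will ultimately force the map $\varphi$ to be an isomorphism once surjectivity (or injectivity) is known. I do not expect a genuine obstacle here: the content is bookkeeping, and the only points deserving care are the finite-type hypothesis that legitimizes the Euler-characteristic identity for \eqref{eq:short-seq-homlgy}, and the appeal to PBW that lets one compute $Q$ from the bare graded vector space supplied by Proposition \ref{prop:decomp-f3}. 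As an independent check, one can sidestep \eqref{eq:short-seq-homlgy} entirely: $\mathfrak{f}_3$ is free, so $\lambda$ is free by the Shirshov--Witt theorem, hence $U\lambda$ is free associative with $P(U\lambda)=(1-P_{\lamlam})^{-1}$, and combining this with $P(\uf)=P(U\lambda)\cdot Q$ gives $P_{\lamlam}=1-Q/P(\uf)$ --- the same rational function.
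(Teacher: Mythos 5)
Your proof is correct and follows essentially the same route as the paper: both extract $P_{\lamlam}$ from the exact sequence \eqref{eq:short-seq-homlgy} via additivity of Poincar\'e series and compute $Q=P(U(\mathfrak{f}_3/\lambda))=(1-t)/\bigl((1-s_1-t)(1-s_2-t)\bigr)$ from the decomposition in Proposition \ref{prop:decomp-f3}. The closing Shirshov--Witt consistency check is a nice extra, but the core argument is the paper's.
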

\begin{proof}\footnote{We thank B. Enriquez and H. Furusho for explaining to us an alternative proof of this statement based on Lazard's elimination.}
Denote by $Q(s_1,s_2,t)$ and $R(s_1,s_2,t)$ the Poincaré series of $U(\mathfrak{f}_3/\lambda)$ and $U(f_3/\lambda)\otimes (kx \oplus ky \oplus kz)$ respectively. It follows from the exactness of the short sequence \eqref{eq:short-seq-homlgy} that 
    % \begin{equation*}
    %     \dim(\lambda/[\lambda,\lambda]) = \dim(U(\mathfrak{f}_3/\lambda)) - \dim \big(U(f_3/\lambda)\otimes (kx \oplus ky \oplus kz)\big). 
    % \end{equation*}
\begin{equation*}
    P(s_1,s_2,t) = R(s_1,s_2,t)-Q(s_1,s_2,t)+1.
\end{equation*}

It follows from Proposition \ref{prop:decomp-f3} that $\mathfrak{f}_3(x,y,z)/\lambda=\mathfrak{f}_2(x,z)\oplus_{\mathfrak{f}_1(z)}\mathfrak{f}_2(y,z)$. By a straightforward counting argument, we get that the Poincaré series of $\mathfrak{f}_2(x,z)$ is $\frac{1}{1-(s_1+t)}$, and that of $\mathfrak{f}_2(y,z)$ is  $\frac{1}{1-(s_2+t)}$. Hence,
\begin{equation*}
    Q(s_1,s_2,t)=\frac{1-t}{(1-s_1-t)(1-s_2-t)}.
\end{equation*}
Moreover, the term $kx \oplus ky \oplus kz$ has Poincaré series $s_1+s_2+t$, so,
\begin{equation*}
    R(s_1,s_2,t)=\frac{1-t}{(1-s_1-t)(1-s_2-t)}(s_1+s_2+t).
\end{equation*}
Finally, combining these two computations, we conclude
\begin{equation*}
\begin{split}
    P(s_1, s_2, t) & =\frac{1-t}{(1-s_1-t)(1-s_2-t)}(s_1+s_2+t-1)+1\\
    & = \frac{s_1 s_2}{(1-s_1-t)(1-s_2-t)}.
\end{split}
\end{equation*}
\end{proof}

\begin{proof}[Proof of Theorem \ref{th:isomorphism}]
    The surjectivity of $\varphi$ follows from Proposition \ref{prop:bases}, and the injectivity from the fact that the Poincaré series of the domain and codomain coincide, by Lemmas \ref{lemma:poincare-L5} and \ref{lemma:poincare-lambda}.
\end{proof}

We can now finally finish the proof of Theorem \ref{th: intro3}.

\begin{proof}[Proof of Theorem \ref{th: intro3}]
Combining the statements of Proposition \ref{prop: m=4} and of Theorem \ref{thm: m=5}, we obtain the statement of Theorem  \ref{th: intro3}.  
\end{proof}

% %
% $$
% I = (U \mathfrak{f}_3) X_{\beta \omega} + (X_{1 \beta})(X_{2 \beta}).
% $$
%  In this Section, we will use the shorthand notation
% for generators of $\mathfrak{f}_3$:
% %
% $$
% x = X_{1 \beta}, \hskip 0.3cm
% y = X_{2, \beta}, \hskip 0.3cm
% z= X_{\beta \omega}.
% $$

The Lie ideal $\lambda \subset \mathfrak{f}_3$ admits the following presentation in terms of the projections $\pi_i$:
\begin{Prop}       \label{prop:3cap}
    We have,
    $$
\lambda = {\rm ker}(\pi_1) \cap {\rm ker}(\pi_2) \cap {\rm ker}(\pi_\beta).
    $$
\end{Prop}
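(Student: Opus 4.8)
The plan is to reduce the claim to an identity inside the free Lie algebra $\mathfrak{f}_3$ and then read it off from the bigraded decomposition of Proposition~\ref{prop:decomp-f3}.

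First I would use that $\mathfrak{f}_3 = \ker(\pi_\beta)$, by the defining short exact sequence $0\to\mathfrak{f}_3\to\mathfrak{p}_5\xrightarrow{\pi_\beta}\mathfrak{p}_4\to 0$. Hence $\ker(\pi_2)\cap\ker(\pi_3)\cap\ker(\pi_\beta) = \bigl(\ker\pi_2\cap\mathfrak{f}_3\bigr)\cap\bigl(\ker\pi_3\cap\mathfrak{f}_3\bigr)$, and the whole computation now takes place inside $\mathfrak{f}_3 = \mathfrak{f}(x,y,z)$ with $x = X_{1\beta}$, $y = X_{2\beta}$, $z = X_{\beta\omega}$. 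Next I would identify these two restricted kernels. Since forgetting $\beta$ and forgetting one of the two moving marked points commute, $\pi_2$ and $\pi_3$ send $\mathfrak{f}_3 = \ker(\pi_\beta\colon\mathfrak{p}_5\to\mathfrak{p}_4)$ into $\ker(\pi_\beta\colon\mathfrak{p}_4\to\mathfrak{p}_3) = \mathfrak{f}_2\subset\mathfrak{p}_4$; on the generators of $\mathfrak{f}_3$ one of them kills $x$ and the other kills $y$, each fixing the remaining two generators, which are precisely the free generators of the corresponding $\mathfrak{f}_2$. Therefore $\pi_2|_{\mathfrak{f}_3}$ and $\pi_3|_{\mathfrak{f}_3}$ are the canonical surjections of free Lie algebras $\mathfrak{f}(x,y,z)\twoheadrightarrow\mathfrak{f}(y,z)$ and $\mathfrak{f}(x,y,z)\twoheadrightarrow\mathfrak{f}(x,z)$, and their kernels are the Lie ideals $[x]$ and $[y]$ generated by $x$ and $y$ (by Lazard elimination, or simply because $\mathfrak{f}(x,y,z)/[x]\cong\mathfrak{f}(y,z)$ already identifies the quotient with the target). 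So the left-hand side of the proposition equals $[x]\cap[y]$.

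It remains to show $[x]\cap[y] = \lambda$. The inclusion $\lambda = \bigl[[x],[y]\bigr]\subseteq[x]\cap[y]$ is immediate because $[x]$ and $[y]$ are ideals. For the reverse inclusion I would invoke Proposition~\ref{prop:decomp-f3}: the $(\deg_x,\deg_y)$-bigrading gives $\mathfrak{f}_3 = \mathbb{C}z\oplus\mathfrak{f}'_2(x,z)\oplus\mathfrak{f}'_2(y,z)\oplus\lambda$, with the four summands being the parts of bidegrees $(0,0)$, $(\geq 1,0)$, $(0,\geq 1)$ and $(\geq 1,\geq 1)$. The ideal $[x]$ is bihomogeneous, contained in the part of $x$-degree $\geq 1$, and (by Lazard elimination) a vector-space complement of $\mathfrak{f}(y,z) = \mathbb{C}z\oplus\mathfrak{f}'_2(y,z)$ inside $\mathfrak{f}_3$; hence $[x] = \mathfrak{f}'_2(x,z)\oplus\lambda$, and symmetrically $[y] = \mathfrak{f}'_2(y,z)\oplus\lambda$. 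Intersecting and using directness of the decomposition yields $[x]\cap[y] = \lambda$, which completes the argument.

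There is no serious obstacle here; the one step needing a little care is the identification in the second paragraph — that the point-forgetting maps $\pi_2,\pi_3$ genuinely restrict on $\mathfrak{f}_3$ to the standard projections of free Lie algebras. This relies on the compatibility of the forgetful maps (forgetting $\beta$ and then a moving point equals forgetting them in the other order) together with the fact that the relevant images are free generators of $\mathfrak{f}_2\subset\mathfrak{p}_4$. Everything else is bookkeeping with the bigrading and the direct-sum decomposition already established in Proposition~\ref{prop:decomp-f3}.
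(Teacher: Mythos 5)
Your proof is correct and follows essentially the same route as the paper: restrict to $\ker(\pi_\beta)=\mathfrak{f}_3$, identify the two remaining kernels with the Lie ideals generated by the two corresponding generators, and conclude via $[x]\cap[y]=\bigl[[x],[y]\bigr]=\lambda$. The only difference is that you justify the final intersection identity carefully using the bigraded decomposition of Proposition~\ref{prop:decomp-f3}, a step the paper simply asserts.
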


\begin{proof}
    Recall that ${\rm ker}(\pi_\beta) = \mathfrak{f}_3(X_{1 \beta}, X_{2 \beta}, X_{\beta \omega})$. Then,
    $$
{\rm ker}(\pi_1) \cap {\rm ker}(\pi_\beta) = [X_{1\beta}]
\quad \text{and} \quad
{\rm ker}(\pi_2) \cap {\rm ker}(\pi_\beta) = [X_{2\beta}],
    $$
    where $[X_{1\beta}]$ and $[X_{2\beta}]$ are Lie ideals in $\mathfrak{f}_3(X_{1 \beta}, X_{2 \beta}, X_{\beta \omega})$ generated by $X_{1\beta}$ and $X_{2\beta}$, respectively.
    Finally,
    $$
{\rm ker}(\pi_1) \cap {\rm ker}(\pi_2) \cap {\rm ker}(\pi_\beta)
= [X_{1\beta}] \cap [X_{2\beta}] = [[X_{1\beta}],[X_{2\beta}]]=
\lambda,
    $$
    as required.
\end{proof}

\begin{Rem}
    Let $\psi \in \mathfrak{f}_2$ be an element of degree at least 3.
    Denote by ${\rm pent}(\psi) \in \mathfrak{p}_5$ the left hand side of the infinitesimal pentagon equation for $\psi$ (for more details, see \cite{Drinfeld}):
    $$
    \begin{array}{lll}
{\rm pent}(\psi) & = & \psi(X_{\alpha1}, X_{12}) + 
+ \psi(X_{\alpha 1} + X_{\alpha 2}, X_{1\beta} + X_{2\beta})
+ \psi(X_{12}, X_{2\beta}) \\
& - & \psi(X_{\alpha 2} + X_{12}, X_{2\beta} )
- \psi(X_{\alpha 1}, X_{1\beta} + X_{12}).
\end{array}
    $$
    The expression ${\rm pent}(\psi)$ belongs to the joint kernel of projections $\pi_1, \pi_2$ and $\pi_\beta$. Hence, by Proposition \ref{prop:3cap}, it belongs to 
    the Lie ideal $\lambda$. B. Enriquez and H. Furusho suggested that 
     the condition 
    \begin{equation} \label{eq  : pent commutator}
    {\rm pent}(\psi) \in [\lambda, \lambda]
    \end{equation}
    is equivalent to $\psi \in \mathfrak{dmr}_0$.
    Note that by Theorem \ref{thm: m=5} equation \eqref{eq  : pent commutator} is equivalent to the condition
    \begin{equation}      \label{eq: pent pi}
    {\rm pent}(\psi) \in {\rm ker}(\pi)
    \end{equation}
stating that ${\rm pent}(\psi)$ belongs to the joint kernel of formal polylogs.
    \end{Rem}

\begin{Rem}      \label{rem:MeganMuze}
Let $\psi \in \mathfrak{f}_2$ be an element of degree at least $3$
verifying the skew-symmetric relation $\psi(x,y)=-\psi(y,x)$. 
Then, Theorems $3.8$ and $3.15$ in \cite{howarth_ren} imply  the 
equivalence of the condition 
\eqref{eq: pent pi} to $\psi \in \mathfrak{dmr}_0$.
\end{Rem}

\begin{Rem}
    N. Markarian has kindly informed us that he proved the following statement. Let $p_i\colon \pi_1^{un}(\mathcal M_{0,5}) \to \pi_1^{un}(\mathcal M_{0,4})$ be the map forgetting one points, $\Lambda=\cap_{i=1,2,3}\ker p_i$ the joint kernel of three projections, and  $f~\in~\pi_1^{un}(\mathcal{M}_{0,4})$ be symmetric element (that is, $f(x,y)=f^{-1}(y,x)$). In this case, ${\rm Pent}(f)\in \Lambda$, and coefficients of $f$ obey regularized double shuffle relations if and only if ${\rm Pent}(f)^{ab}\in \Lambda^{ab}$ vanishes. This statement is mentioned in the slides of Markarian's talk \cite{LectureA} (also with the reference to Enriquez-Furusho), and it will be explained in his forthcoming paper ``Multiplicative convolution and double shuffle relations''. The statement of Remark \ref{rem:MeganMuze} resembles of its infinitesimal version. 
\end{Rem}

\bibliographystyle{abbrv}
\bibliography{bibliography}

@incollection{reutenauer_free_2003,
    title = {Free {L}ie algebras},
    volume = {3},
    isbn = {978-0-444-51264-2},
    url = {https://linkinghub.elsevier.com/retrieve/pii/S157079540380075X},
    language = {en},
    urldate = {2025-12-01},
    booktitle = {Handbook of {Algebra}},
    publisher = {Elsevier},
    author = {Reutenauer, Christophe},
    year = {2003},
    doi = {10.1016/S1570-7954(03)80075-X},
    pages = {887--903},
}

@article{arnold_cohomology_1969,
    title = {The cohomology ring of the colored braid group},
    volume = {5},
    copyright = {http://www.springer.com/tdm},
    issn = {0001-4346, 1573-8876},
    url = {http://link.springer.com/10.1007/BF01098313},
    doi = {10.1007/BF01098313},
    language = {en},
    number = {2},
    urldate = {2025-12-01},
    journal = {Mathematical Notes of the Academy of Sciences of the USSR},
    author = {Arnol'd, V. I.},
    year = {1969},
    pages = {138--140},
}

@article{brown_multiple_2009,
    title = {Multiple zeta values and periods of moduli spaces $\overline{{\mathfrak{M}}}_{0,n}$},
    volume = {42},
    issn = {0012-9593, 1873-2151},
    url = {http://www.numdam.org/item?id=ASENS_2009_4_42_3_371_0},
    doi = {10.24033/asens.2099},
    number = {3},
    urldate = {2025-09-22},
    journal = {Annales scientifiques de l'École normale supérieure},
    author = {Brown, Francis C. S.},
    year = {2009},
    pages = {371--489},
}

@incollection{chatterji_polylogarithms_1995,
    address = {Basel},
    title = {Polylogarithms in {Arithmetic} and {Geometry}},
    isbn = {978-3-0348-9897-3 978-3-0348-9078-6},
    url = {http://link.springer.com/10.1007/978-3-0348-9078-6_31},
    language = {en},
    urldate = {2025-12-01},
    booktitle = {Proceedings of the {International} {Congress} of {Mathematicians}},
    publisher = {Birkhäuser Basel},
    author = {Goncharov, Alexander B.},
    editor = {Chatterji, S. D.},
    year = {1995},
    doi = {10.1007/978-3-0348-9078-6_31},
    pages = {374--387},
}

@article{furusho_double_2011,
    title = {Double shuffle relation for associators},
    volume = {174},
    issn = {0003-486X},
    url = {http://annals.math.princeton.edu/2011/174-1/p09},
    doi = {10.4007/annals.2011.174.1.9},
    language = {en},
    number = {1},
    urldate = {2025-09-22},
    journal = {Annals of Mathematics},
    author = {Furusho, Hidekazu},
    year = {2011},
    pages = {341--360},
}

@article{schneps_double_2012,
    title = {Double shuffle and {Kashiwara}–{Vergne} {Lie} algebras},
    volume = {367},
    copyright = {https://www.elsevier.com/tdm/userlicense/1.0/},
    issn = {00218693},
    url = {https://linkinghub.elsevier.com/retrieve/pii/S002186931200292X},
    doi = {10.1016/j.jalgebra.2012.04.034},
    language = {en},
    urldate = {2025-09-22},
    journal = {Journal of Algebra},
    author = {Schneps, Leila},
    year = {2012},
    pages = {54--74},
}

@misc{schneps_double_2025,
    title = {The double shuffle {Lie} algebra injects into the {Kashiwara}-{Vergne} {Lie} algebra},
    copyright = {Creative Commons Attribution Non Commercial Share Alike 4.0 International},
    url = {https://arxiv.org/abs/2504.14293},
    doi = {10.48550/ARXIV.2504.14293},
    abstract = {In this article we prove that there exists an injective Lie morphism from the double shuffle Lie algebra \$\{{\textbackslash}frak\{ds\}\}\$ into the Kashiwara-Vergne Lie algebra \$\{{\textbackslash}frak\{krv\}\}\$, forming a commutative triangle with the known Lie injections of the Grothendieck-Teichmüller Lie algebra \$\{{\textbackslash}frak\{grt\}\}{\textbackslash}rightarrow \{{\textbackslash}frak\{ds\}\}\$ and \$\{{\textbackslash}frak\{grt\}\}{\textbackslash}rightarrow\{{\textbackslash}frak\{krv\}\}\$.},
    urldate = {2025-09-22},
    publisher = {arXiv},
    author = {Schneps, Leila},
    year = {2025},
    note = {\href{https://arxiv.org/abs/2504.14293v1}{arXiv:2504.14293}},
    keywords = {17B37 17B70, FOS: Mathematics, Rings and Algebras (math.RA)},
}

@misc{enriquez_double_2025,
    title = {Double shuffle {Lie} algebra and special derivations},
    copyright = {Creative Commons Attribution 4.0 International},
    url = {https://arxiv.org/abs/2505.02265},
    doi = {10.48550/ARXIV.2505.02265},
    abstract = {Racinet's double shuffle Lie algebra \${\textbackslash}mathfrak\{dmr\}\_0\$ is a Lie subalgebra of the Lie algebra \${\textbackslash}mathfrak\{tder\}\$ of tangential derivations of the free Lie algebra with generators \$x\_0,x\_1\$, i.e. of derivations such that \$x\_1{\textbackslash}mapsto 0\$ and \$x\_0{\textbackslash}mapsto [a,x\_0]\$ for some element \$a\$. We prove: (1) \${\textbackslash}mathfrak\{dmr\}\_0\$ is contained in the Lie subalgebra \${\textbackslash}mathfrak\{sder\}\$ of \${\textbackslash}mathfrak\{tder\}\$ of special derivations, i.e. satisfying the additional condition that \$x\_{\textbackslash}infty{\textbackslash}mapsto [b,x\_{\textbackslash}infty]\$ for some element \$b\$, where \$x\_{\textbackslash}infty:=x\_1-x\_0\$; (2) \${\textbackslash}mathfrak\{dmr\}\_0\$ is stable under the involution of \${\textbackslash}mathfrak\{sder\}\$ induced by the exchange of \$x\_0\$ and \$x\_{\textbackslash}infty\$. The first statement: (a) says that any element of \${\textbackslash}mathfrak\{dmr\}\_0\$ satisfies the "senary relation" (a fact announced without proof by Ecalle in 2011); (b) implies the inclusion \${\textbackslash}mathfrak\{dmr\}\_0{\textbackslash}subset {\textbackslash}mathfrak\{krv\}\_2\$ (which was proved by Schneps in 2012 only conditionally to the truth of (1)).},
    urldate = {2025-09-22},
    publisher = {arXiv},
    author = {Enriquez, Benjamin and Furusho, Hidekazu},
    year = {2025},
     note = {\href{https://arxiv.org/abs/2505.02265v1}{arXiv:2505.02265}},
    keywords = {17B37 17B70, Algebraic Geometry (math.AG), FOS: Mathematics, Rings and Algebras (math.RA)},
}

@article{zhao_analytic_1999,
    title = {Analytic continuation of multiple zeta functions},
    volume = {128},
    copyright = {https://www.ams.org/publications/copyright-and-permissions},
    issn = {0002-9939, 1088-6826},
    url = {https://www.ams.org/proc/2000-128-05/S0002-9939-99-05398-8/},
    doi = {10.1090/S0002-9939-99-05398-8},
    abstract = {In this paper we shall define the analytic continuation of the multiple (Euler-Riemann-Zagier) zeta functions of depth
                    d
                    d
              :
                {\textbackslash}[
                        ζ
                      (
                        s
                        1
                      ,
                        …
                      ,
                        s
                        d
                      )
                      :=
                          ∑
                          0
                          {\textgreater}
                            n
                            1
                          {\textgreater}
                            n
                            2
                          {\textgreater}
                            ⋯
                          {\textgreater}
                            n
                            d
                        1
                            n
                            1
                                s
                                1
                            n
                            2
                                s
                                2
                            ⋯
                            n
                            d
                                s
                                d
                      ,
                    {\textbackslash}zeta (s\_1,{\textbackslash}dots ,s\_d):= {\textbackslash}sum \_\{0{\textgreater}n\_1 {\textgreater} n\_2{\textgreater}{\textbackslash}cdots {\textgreater}n\_d\} {\textbackslash}frac \{1\}\{n\_1{\textasciicircum}\{s\_1\}n\_2{\textasciicircum}\{s\_2\}{\textbackslash}cdots n\_d{\textasciicircum}\{s\_d\}\},
                {\textbackslash}]
              where
                      Re
                        ⁡
                      (
                        s
                        d
                      )
                      {\textgreater}
                      1
                    {\textbackslash}operatorname \{Re\}(s\_d){\textgreater}1
              and
                          ∑
                          j
                          =
                          1
                        d
                      Re
                        ⁡
                      (
                        s
                        j
                      )
                      {\textgreater}
                      d
                    {\textbackslash}sum \_\{j=1\}{\textasciicircum}d{\textbackslash}operatorname \{Re\}(s\_j){\textgreater}d
              . We shall also study their behavior near the poles and pose some open problems concerning their zeros and functional equations at the end.},
    language = {en},
    number = {5},
    urldate = {2025-09-22},
    journal = {Proceedings of the American Mathematical Society},
    author = {Zhao, Jianqiang},
    year = {1999},
    pages = {1275--1283},
}

@article{chen_iterated_1977,
    title = {Iterated path integrals},
    volume = {83},
    issn = {0273-0979, 1088-9485},
    url = {https://www.ams.org/bull/1977-83-05/S0002-9904-1977-14320-6/},
    doi = {10.1090/S0002-9904-1977-14320-6},
    language = {en},
    number = {5},
    urldate = {2025-12-01},
    journal = {Bulletin of the American Mathematical Society},
    author = {Chen, Kuo-Tsai},
    year = {1977},
    pages = {831--879},
}

@book{weibel_introduction_1994,
    edition = {1},
    title = {An {Introduction} to {Homological} {Algebra}},
    copyright = {https://www.cambridge.org/core/terms},
    isbn = {978-0-521-43500-0 978-0-521-55987-4 978-1-139-64413-6},
    url = {https://www.cambridge.org/core/product/identifier/9781139644136/type/book},
    abstract = {The landscape of homological algebra has evolved over the last half-century into a fundamental tool for the working mathematician. This book provides a unified account of homological algebra as it exists today. The historical connection with topology, regular local rings, and semi-simple Lie algebras are also described. This book is suitable for second or third year graduate students. The first half of the book takes as its subject the canonical topics in homological algebra: derived functors, Tor and Ext, projective dimensions and spectral sequences. Homology of group and Lie algebras illustrate these topics.  Intermingled are less canonical topics, such as the derived inverse limit functor lim1, local cohomology, Galois cohomology, and affine Lie algebras. The last part of the book covers less traditional topics that are a vital part of the modern homological toolkit: simplicial methods, Hochschild and cyclic homology, derived categories and total derived functors. By making these tools more accessible, the book helps to break down the technological barrier between experts and casual users of homological algebra.},
    urldate = {2025-12-15},
    publisher = {Cambridge University Press},
    author = {Weibel, Charles A.},
    year = {1994},
    doi = {10.1017/CBO9781139644136},
}

@misc{howarth_ren,
      title={Reduced coaction {L}ie algebra, double shuffle {L}ie algebra and noncommutative krv2 equation}, 
      author={Megan Howarth and Muze Ren},
      year={2025},
      eprint={2509.20275},
      archivePrefix={arXiv},
      primaryClass={math.QA},
      note = {\href{https://arxiv.org/abs/2509.20275v2}{arXiv:2509.20275}},
}

@article {Drinfeld,
    AUTHOR = {Drinfel'd, V. G.},
     TITLE = {On quasitriangular quasi-{H}opf algebras and on a group that
              is closely connected with {${\rm Gal}(\overline{\bf Q}/{\bf
              Q})$}},
   JOURNAL = {Algebra i Analiz},
  FJOURNAL = {Algebra i Analiz},
    VOLUME = {2},
      YEAR = {1990},
    NUMBER = {4},
     PAGES = {149--181},
      ISSN = {0234-0852},
   MRCLASS = {16W30 (17B37)},
  MRNUMBER = {1080203},
MRREVIEWER = {Ivan\ Penkov},
}

@misc{LectureA,
author = {Markarian, Nikita},
institution = {IRMA, Strasbourg},
howpublished = {\url{https://irma.math.unistra.fr/~markarian/dt-slides.pdf}},
year = {2025},
title = {Multiplicative convolution and double shuffle equations}
}

\end{document}